\NeedsTeXFormat{LaTeX2e}

\documentclass[american]{amsart}

\usepackage[utf8]{inputenc}
\usepackage{graphicx}
\usepackage{pb-diagram}
\usepackage[backend=bibtex,style=alphabetic,isbn=false]{biblatex}
\usepackage{hyperref}
\usepackage{color}
\usepackage{xypic}

\usepackage{ifthen}


\DeclareMathOperator{\Aut}{Aut}

\DeclareMathOperator{\ev}{ev}
\DeclareMathOperator{\GL}{GL}

\DeclareMathOperator{\ImaginaryPart}{Im}

\DeclareMathOperator{\SO}{SO}

\DeclareMathOperator{\SU}{\mathrm{SU}}

\newcommand{\conSum}{{\mathbin{\,\#\,}}}
\newcommand{\restricted}[2]{{\left.{#1}\right|_{#2}}}

\newcommand{\p}{\partial}
\newcommand{\lie}[1]{{\mathcal{L}_{#1}}}
\newcommand{\abs}[1]{{\left\lvert #1\right\rvert}}
\newcommand{\norm}[1]{{\lVert #1\rVert}}

\newcommand{\attachingSphere}[1]{{S_\mathrm{at}^{#1}}}
\newcommand{\beltSphere}[1]{{S_\mathrm{belt}^{#1}}}
\newcommand{\Mreg}{{M_\mathrm{reg}}}

\newcommand{\Wmodel}{{W_\mathrm{model}}}


\renewcommand{\epsilon}{\varepsilon}
\makeatletter
\usepackage{xspace}

\makeatother
\newcommand{\LOB}{\textsf{Lob}\xspace}

\newcommand{\0}{{\mathbf 0}}
\newcommand{\bbf}{{\mathbf b}}
\newcommand{\bB}{{\mathcal B}}
\newcommand{\CC}{{\mathbb C}}
\newcommand{\dD}{{\mathcal D}}
\newcommand{\DD}{{\mathbb D}}

\newcommand{\nN}{{\mathcal N}}

\newcommand{\RR}{{\mathbb R}}
\renewcommand{\SS}{{\mathbb S}}

\newcommand{\uU}{{\mathcal U}}

\newcommand{\x}{{\mathbf x}}
\newcommand{\y}{{\mathbf y}}
\newcommand{\z}{{\mathbf z}}
\newcommand{\ZZ}{{\mathbb Z}}


\newcommand{\defin}[1]{\textbf{#1}}

\newcommand{\mM}{{\mathcal M}}
\newcommand{\mMsmooth}{\mM_{{\operatorname{smooth}}}}
\newcommand{\mMbubble}{\mM_{{\operatorname{bubble}}}}
\newcommand{\mMint}{\mM_{{\operatorname{int}}}}



\theoremstyle{plain}

\newcounter{maintheorem}


\newtheorem{theorem}{Theorem}[section]
\newtheorem{lemma}[theorem]{Lemma}

\newtheorem{corollary}[theorem]{Corollary}

\newtheorem{question}[theorem]{Question}

\newtheorem{proposition}[theorem]{Proposition}

\theoremstyle{remark}
\newtheorem{remark}[theorem]{Remark}
\newtheorem{example}[theorem]{Example}
\newtheorem{assumptions}[theorem]{Assumptions}

\theoremstyle{definition}

\newtheorem{definitionNumbered}[theorem]{Definition}

\numberwithin{equation}{section}

\addbibresource{main.bib}

\begin{document}
\title[Subcritical surgery and symplectic fillings]{Subcritical
  contact surgeries and the topology of symplectic fillings}

\author{Paolo Ghiggini}
\address[P.\ Ghiggini]{
  Laboratoire de Mathématiques Jean Leray \\
  BP 92208 \\
  2, Rue de la Houssinière \\
  F-44322 Nantes Cedex 03 \\
  FRANCE}
\email{paolo.ghiggini@univ-nantes.fr}

\author{Klaus Niederkrüger}
\address[K.\ Niederkrüger]{
  Alfréd Rényi Institute of Mathematics \\
  Hungarian Academy of Sciences \\
  POB 127 \\
  H-1364 Budapest \\
  HUNGARY}
\address[K.\ Niederkrüger]{
  Institut de mathématiques de Toulouse\\
  Université Paul Sabatier -- Toulouse III\\
  118 route de Narbonne\\
  F-31062 Toulouse Cedex 9\\
  FRANCE}
\email{niederkr@math.univ-toulouse.fr}

\author{Chris Wendl}

\address[C.\ Wendl]{
  Department of Mathematics \\
  University College London \\
  Gower Street \\
  London WC1E 6BT \\
  UNITED KINGDOM }
\email{c.wendl@ucl.ac.uk}

\begin{abstract}
  By a result of Eliashberg, every symplectic filling of a
  three-dimensional contact connected sum is obtained by performing a
  boundary connected sum on another symplectic filling.
  We prove a partial generalization of this result for subcritical
  contact surgeries in higher dimensions:
  given any contact manifold that arises from another contact manifold
  by subcritical surgery, its belt sphere is null-bordant in the
  oriented bordism group~$\Omega_*^{SO}(W)$ of any symplectically
  aspherical filling~$W$, and in dimension five, it will even be
  nullhomotopic.
  More generally, if the filling is not aspherical but is
  semipositive, then the belt sphere will be trivial in $H_*(W)$.
  Using the same methods, we show that the contact connected sum
  decomposition for tight contact structures in dimension three does
  not extend to higher dimensions:
  in particular, we exhibit connected sums of manifolds of dimension
  at least five with Stein fillable contact structures that do not
  arise as contact connected sums.
  The proofs are based on holomorphic disk-filling techniques, with
  families of Legendrian open books (so-called ``\LOB{}s'') as
  boundary conditions.
\end{abstract}

\maketitle

\section{Introduction}

\subsection{The main result and corollaries}
\label{sec:main}
The idea of constructing contact manifolds as boundaries of symplectic
$2n$-manifolds by attaching handles of index at most $n$ goes back to
Eliashberg \cite{Eliashberg_Stein} and Weinstein
\cite{WeinsteinHandlebodies}.
In this context, a special role is played by \emph{subcritical}
handles, i.e.~handles with index strictly less than~$n$.
One well-known result on this topic concerns subcritical Stein
fillings, which are known to be \emph{flexible} in the sense that
their symplectic geometry is determined by homotopy theory, see
\cite{CieliebakEliashberg}.
There are also known restrictions on the topological types of
subcritical fillings, e.g.~by results of M.-L.~Yau
\cite{YauSubcritical} and Oancea-Viterbo
\cite[Prop.~5.7]{OanceaViterbo}, the homology of a subcritical filling
with vanishing first Chern class is uniquely determined by its contact
boundary; this result can be viewed as a partial generalization of the
Eliashberg-Floer-McDuff theorem \cite{McDuff_contactType} classifying
symplectically aspherical fillings of standard contact spheres up to
diffeomorphism.
In dimension three, there is a much stronger result due to Eliashberg
\cite{Eliashberg_filling,CieliebakEliashberg}: in this case every
subcritical surgery is a connected sum, and the result states
that if $(M',\xi')$ is a closed contact $3$-manifold obtained from
another (possibly disconnected) contact manifold $(M,\xi)$ by a
connected sum, then every symplectic filling of $(M',\xi')$ is
obtained by attaching a Weinstein $1$-handle to a symplectic filling
of $(M,\xi)$.
This implies that symplectic fillings of subcritically fillable
contact $3$-manifolds are unique up to symplectic deformation equivalence 
and blowup---in particular, their Stein fillings are unique up to
symplectomorphism.
The present paper was motivated by the goal of generalizing
Eliashberg's connected sum result to higher dimensions.
The natural question in this setting is the following:

\begin{question}\label{question:main}
  Given a closed contact manifold $(M',\xi')$ that is obtained from
  another contact manifold $(M,\xi)$ by subcritical contact surgery,
  is every (symplectically aspherical) filling of $(M',\xi')$
  obtained by attaching a subcritical Weinstein handle to a symplectic
  filling of $(M,\xi)$?
\end{question}

Classifying fillings up to symplectomorphism as suggested in this
question would be far too ambitious in higher dimensions, e.g.~the
strongest result known so far, the Eliashberg-Floer-McDuff theorem, is
essentially a classification of fillings up to \emph{homotopy type}
(the $h$-cobordism theorem then improves it to a classification up to
diffeomorphism).
Our objective in this paper will therefore be to understand the main
\emph{homotopy-theoretic} obstruction to an affirmative answer to
Question~\ref{question:main}.

To state the main result, let us first recall some basic notions.
Given an oriented $(2n-1)$-dimensional manifold $M$, a (positive,
co-oriented) \defin{contact structure} on $M$ is a hyperplane
distribution of the form $\xi = \ker \alpha$, where the \defin{contact
  form} $\alpha$ is a smooth $1$-form satisfying $\alpha \wedge
(d\alpha)^{n-1} > 0$, and the co-orientation of~$\xi$ is determined by
$\alpha > 0$.
In this paper, contact structures will \emph{always} be assumed to be
both positive (with respect to a given orientation on~$M$) and
co-oriented, and all contact forms will be assumed compatible with the
co-orientation.
A compact symplectic $2n$-manifold $(W,\omega)$ with oriented boundary
$M = \p W$ carrying a contact structure $\xi$ is called a
\defin{strong symplectic filling} of $(M,\xi)$ if $\xi$ admits a
contact form~$\lambda$ that extends to a primitive of $\omega$ on a
neighborhood of~$\p W$.
It is equivalent to say that the boundary is \defin{symplectically
  convex}, as the vector field $\omega$-dual to $\lambda$ is then a
Liouville vector field pointing transversely outward at~$\p W$.
More generally, we say that $(W,\omega)$ is a \defin{weak symplectic
  filling} of $(M,\xi)$ if $\xi$ is the bundle of complex tangencies
for some $\omega$-tame almost complex structure near $\p W$ that makes
the boundary pseudoconvex (see \cite{WeafFillabilityHigherDimension}).
Recall that if $(M,\xi)$ contains a $(k-1)$-dimensional isotropic
sphere $\attachingSphere{k-1}$ with trivial normal bundle, then one
can perform a \defin{contact surgery of index~$k$} on $(M,\xi)$ by
attaching to $(-\epsilon,0] \times M$ a handle of the form $\DD^k
\times \DD^{2n-k}$ along a neighborhood of~$\attachingSphere{k-1}$.
The new contact manifold $(M',\xi')$ then contains the
$(2n-k-1)$-dimensional coisotropic sphere $\beltSphere{2n-k-1} = \{0\}
\times \p \DD^{2n-k}$, which we call the \defin{belt sphere} of the
surgery.
This surgery operation was first introduced by Weinstein
\cite{WeinsteinHandlebodies}, and we will give a more precise
description of it in Section~\ref{sec: description handle and
  surgery}.
A Weinstein handle yields a symplectic cobordism that can be attached
to any weak filling $(W,\omega)$ of $(M,\xi)$ for which $\omega$ is
exact along $\attachingSphere{k-1}$; the result is a weak filling of
$(M',\xi')$ in which the belt sphere is necessarily contractible
(Figure~\ref{fig:trivial}).
Our main result is the following.

\begin{theorem}\label{thm: main theorem}
  Suppose $(M',\xi')$ is a closed contact manifold of dimension~$2n-1
  \ge 3$ that has been obtained from a manifold $(M,\xi)$ by a contact
  surgery of index $k \le n-1$, with belt sphere $\beltSphere{2n-k-1}
  \subset M'$.
  Assume $(W',\omega')$ is a weak symplectic filling of $(M',\xi')$.
  \begin{itemize}
  \item[(a)] If ($W',\omega')$ is semipositive, then the belt sphere
    represents the trivial homology class in $H_{2n-k-1}(W'; \ZZ)$.
  \item[(b)] If $(W',\omega')$ is symplectically aspherical, then the
    belt sphere represents the trivial element in the oriented
      bordism group $\Omega_{2n-k-1}^{SO}(W')$.
    If additionally either (1)~$M'$ is $5$-dimensional, or (2)~$M'$ is
    $7$-dimensional and $k=3$, then $\beltSphere{2n-k-1}$ is
    contractible in $W'$, that is, it represents the trivial class in
    $\pi_{2n-k-1}(W')$.
  \end{itemize}
\end{theorem}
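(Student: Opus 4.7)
The plan is to adapt the holomorphic-disk-filling technique pioneered by Gromov and Eliashberg, using a family of Legendrian open books (a \LOB{}) near the belt sphere as a boundary condition. The starting observation is a structural fact about subcritical surgery: inside the standard Weinstein handle model of Section~\ref{sec: description handle and surgery}, one can exhibit a \LOB{} $\mathcal{L}\subset(M',\xi')$ whose pages are $(n-1)$-dimensional Legendrian disks and which supports a canonical Bishop family of small holomorphic disks in any weak filling. The ``large-radius'' limit of this Bishop family sweeps out precisely the belt sphere $\beltSphere{2n-k-1}$, so that the belt sphere is joined through the Bishop family to the (contractible) binding of~$\mathcal{L}$.

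Given a weak filling $(W',\omega')$, pick an $\omega'$-tame almost complex structure~$J$ for which~$\xi'$ is $J$-invariant along~$\p W'$ and which agrees with the standard model on a neighborhood of~$\mathcal{L}$, and let $\mathcal{M}$ be the moduli space of somewhere-injective $J$-holomorphic disks with one marked boundary point, having boundary on the pages of~$\mathcal{L}$ and lying in the homotopy class of the Bishop disks. For generic~$J$, standard transversality makes $\mathcal{M}$ a smooth oriented manifold of the expected dimension, and the boundary-evaluation $\ev\colon\mathcal{M}\to W'$ restricts on the Bishop part to a diffeomorphism onto a collar of~$\beltSphere{2n-k-1}$. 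The heart of the proof is the Gromov compactness analysis of~$\mathcal{M}$: the \LOB{} structure of~$\mathcal{L}$ combined with a maximum-principle argument in the handle model rules out escape of disks into the page interiors and breaking at the boundary, so the only remaining degeneration is interior sphere bubbling inside~$W'$. If $(W',\omega')$ is symplectically aspherical no non-trivial holomorphic spheres exist, so $\mathcal{M}$ is compact and its boundary-evaluation is a smooth, oriented null-bordism of~$\beltSphere{2n-k-1}$ in~$W'$, proving the first part of~(b); in the semipositive case, generic-$J$ transversality forces bubbling to occur only in codimension~$\ge 2$, so the compactified evaluation defines a singular pseudocycle bounding the belt sphere, yielding~(a).

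The low-dimensional refinements of~(b) are then extracted from the bordism triviality by elementary topology: when $\dim M'\in\{5,7\}$ in the stated ranges, $\beltSphere{2n-k-1}$ has dimension at most~$4$, the oriented bordism groups vanish or are tightly controlled in these degrees, and combining this with the contractibility of the parameter space for the Bishop seed allows one to promote the null-bordism to a null-homotopy via obstruction theory and a Hurewicz-type argument on~$\pi_{2n-k-1}(W')$. The main technical obstacle throughout is the codimension-two control of sphere bubbling in the semipositive case: verifying this requires careful index bookkeeping for the bubbling strata, and it is precisely what forces the semipositivity hypothesis in~(a) (rather than merely tameness of~$J$), and the asphericity hypothesis in the bordism part of~(b) (rather than mere semipositivity), accounting for the sharp dichotomy in the conclusion.
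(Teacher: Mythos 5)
Your high-level strategy — Bishop families attached to Legendrian open books near the belt sphere, Gromov compactness controlled by a maximum principle in the handle model, asphericity to kill sphere bubbles, semipositivity for the pseudocycle version — is the right framework and matches the paper. However, there are three concrete gaps.

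First, you describe \emph{a single} \LOB{} $\mathcal{L}\subset M'$ whose Bishop family ``sweeps out'' the belt sphere. A \LOB{} is by definition an $n$-dimensional submanifold of a $(2n-1)$-manifold, whereas the belt sphere of an index-$k$ surgery has dimension $2n-k-1 > n$ whenever $k < n-1$. So a single \LOB{} can only be the belt sphere itself in the critical-minus-one case $k = n-1$. For all other subcritical indices, one needs a \emph{family} of \LOB{}s $\{L_{\bbf^+}\}_{\bbf^+\in\DD^m}$ (with $m = n-k-1$) foliating a piece $S_B \cong \DD^m \times \SS^n$ of the belt sphere, and the moduli space has a moving boundary condition over this family. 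This genuinely changes the analysis: transversality must be proved for the moving boundary problem (the Fredholm index picks up a correction of $+m$), the boundary of the compactified moduli space contains Bishop disks on the \emph{boundary} \LOB{}s $\norm{\bbf^+} = r$ (not just collapsed disks at a binding), and a separate maximum-principle argument is needed to show disks with $\norm{\bbf^+} = r$ stay inside the model (this is the role of the weakly plurisubharmonic piece $g_A$). None of this appears in your sketch.

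Second, you assert the \LOB{}s can be found inside the \emph{standard} Weinstein handle model. The paper shows this is false: the usual Liouville form on $\p_+H_r$ does not induce a family of \LOB{}s on the belt sphere, and one must first deform by a Hamiltonian isotopy to make the contact structure agree with complex tangencies of a plurisubharmonic level set, then further replace the round sphere level set by the piecewise-smooth boundary of a poly-disk (the max of two plurisubharmonic functions $g_A, g_B$). It is only on the poly-disk boundary that the family of \LOB{}s appears. This deformation is a genuinely nontrivial geometric step, not a cosmetic convention.

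Third, the improvement from null-bordism to null-homotopy in the low-dimensional cases is not a matter of vanishing bordism groups plus obstruction theory. What the paper actually exploits is that the compactified moduli space is a \emph{trivial} $\DD^2$-bundle $\overline{\Sigma}\times\DD^2$ over a compact manifold with boundary and corners. One then performs ambient handle surgery on $\overline{\Sigma}\times\DD^2$, using the spare $\DD^2$-factor to embed cancelling spheres and kill low-index handles, and extends the evaluation map over each new handle using simple connectivity of the target sphere. The argument hits real topological obstructions above $\dim\Sigma = 3$ (e.g.\ $\Sigma\times\DD^2$ must be trivializable as a handlebody, and in dimension five one must track framings to keep the manifold parallelizable, which is the content of Lemma~\ref{lemma: attaching 2-handles with trivial tangent bundle}). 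Invoking ``obstruction theory and a Hurewicz-type argument on $\pi_{2n-k-1}(W')$'' does not substitute for this: the bordism groups that would need to vanish are $\Omega^{SO}_*(W')$, not $\Omega^{SO}_*(\mathrm{pt})$, and a null-bordism of a sphere by a general manifold $N\to W'$ does not by itself give a nullhomotopy unless $N$ can be surgered to a disk rel its map, which is precisely the content of Propositions~\ref{prop: surgering surface times disk} and~\ref{prop: surgering 3-manifold times disk}.
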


\begin{figure}[htbp]
  \centering
  \includegraphics[height=2.5cm,keepaspectratio]{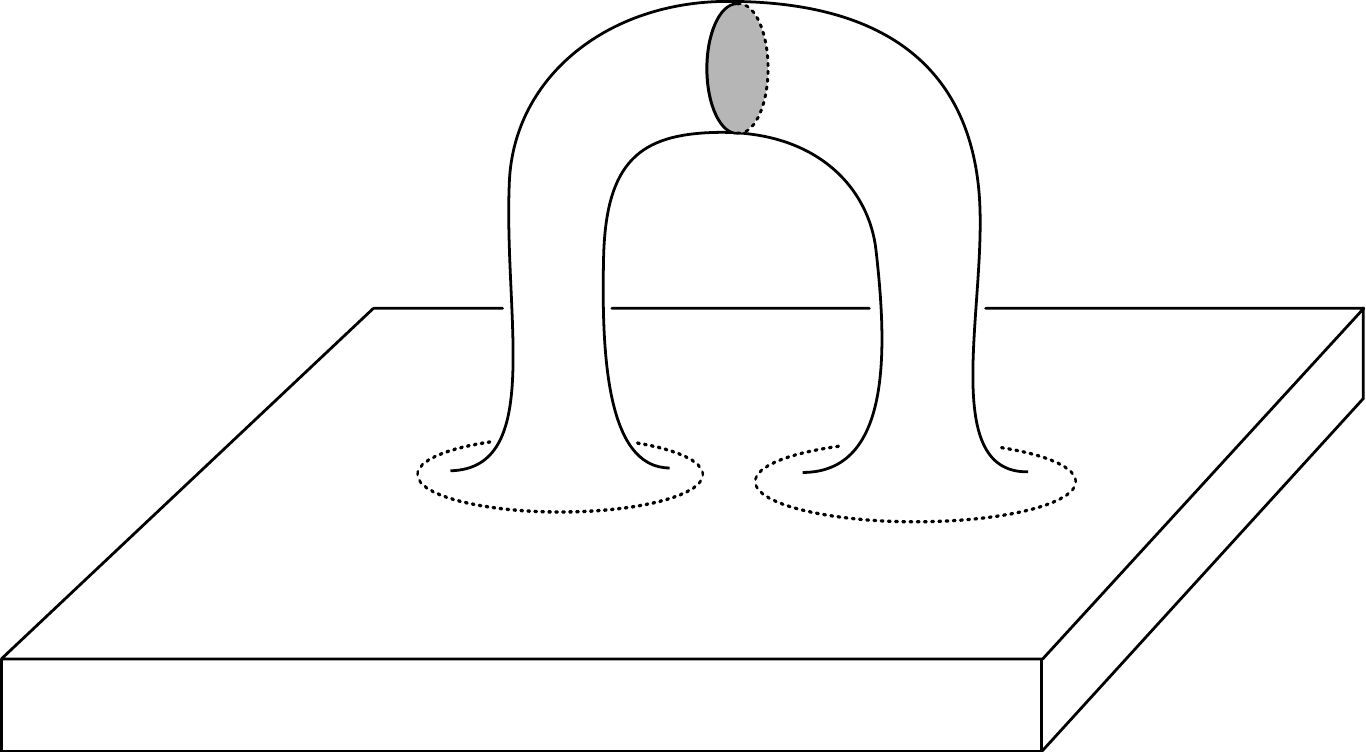}
  \caption{\label{fig:trivial} The belt sphere bounds an embedded
    disk inside the handle.}
\end{figure}

We now state two related results that follow via the same techniques.
Recall that in dimension three, convex surface theory gives rise to a
contact prime decomposition theorem, implying e.g.~that every tight
contact structure on a closed $3$-manifold of the form $M_0 \# M_1$
arises as a contact connected sum of tight contact structures on $M_0$
and $M_1$ (see \cite{ColinConnectedSum}, or \cite[\S 4.12]{GeigesBook}
for more details).
Some evidence against a higher-dimensional generalization of this
result appeared in the recent work of Bowden, Crowley and Stipsicz
\cite{BowdenCrowleyStipsicz2}, providing also a negative answer to a
topological version of Question~\ref{question:main}:
namely, there exist pairs of closed oriented manifolds $M_0, M_1$ such
that $M_0 \conSum M_1$ admits a Stein fillable contact structure but
$M_0$ and $M_1$ do not.
This did not imply an actual answer to Question~\ref{question:main},
however, as it was unclear whether the contact structures on $M_0
\conSum M_1$ in the examples of \cite{BowdenCrowleyStipsicz2} could
actually be \emph{contact} connected sums, i.e.~whether they arise
from contact structures $\xi_0$ on $M_0$ and $\xi_1$ on $M_1$ by
performing index~$1$ contact surgery.
The following result gives a negative answer to the latter question,
and shows that there is no hope of extending the contact prime
decomposition theorem to higher dimensions.
The theorem applies in particular whenever $M$ is an almost contact
$\SS^{n-1}$-bundle over~$\SS^n$ that is not a homotopy sphere, so for
instance $M$ could be $\SS^{n-1} \times \SS^n$ or---as in
\cite{BowdenCrowleyStipsicz2}---the unit cotangent bundle of a sphere.

\begin{theorem}\label{thm: almost contact connSum not genuine}
  Suppose $M$ is a closed oriented manifold of dimension $2n-1 \ge 5$
  that is not a homotopy sphere but admits an almost contact structure
  $\Xi$ and a Morse function with unique local maxima and minima and
  otherwise critical points of index $n-1$ and $n$ only.
  Then $M\conSum (-M)$ admits a Stein fillable contact structure that
  is homotopic to the almost contact structure $\Xi \conSum
  \overline{\Xi}$ but is not isotopic to $\xi_1 \conSum \xi_2$ for any
  contact structures $\xi_1$ and $\xi_2$ on $M$ and $-M$ respectively.
  \footnote{Given an oriented manifold~$M$ with almost contact
    structure~$\Xi$, we denote by $-M$ the same manifold with reversed
    orientation, and let $\overline{\Xi}$ denote the almost contact
    structure on $-M$ obtained by inverting the co-orientation
    of~$\Xi$.}
\end{theorem}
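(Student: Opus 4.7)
The plan is to construct the announced Stein filling of $M \conSum (-M)$ explicitly from the Morse function and the almost contact structure $\Xi$, and then to apply Theorem~\ref{thm: main theorem} to the resulting filling in order to obstruct any contact connected sum decomposition of the induced contact structure.

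For the construction, I would begin with $D^{2n}$ and attach a Weinstein handle of index $n-1$ for each critical point of the Morse function of index $n-1$ and one of index $n$ for each critical point of index $n$, with attaching data arranged so that the resulting $2n$-dimensional Weinstein handlebody $W$ satisfies $\partial W \cong M \conSum (-M)$; this follows the handlebody scheme of \cite{BowdenCrowleyStipsicz2}. The almost contact structure $\Xi$ supplies the almost complex data needed to extend across each handle, and Eliashberg's theorem then upgrades $W$ to a Stein domain whose induced boundary contact structure $\xi$ lies by construction in the almost contact class of $\Xi \conSum \overline{\Xi}$.

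To rule out a contact connected sum decomposition, suppose for contradiction that $\xi$ is isotopic to $\xi_1 \conSum \xi_2$ for some contact structures $\xi_i$ on $M$ and $-M$. Then $(M \conSum (-M), \xi)$ arises from $(M, \xi_1) \sqcup (-M, \xi_2)$ by an index-$1$ contact surgery, with a well-defined separating belt sphere $\beltSphere{2n-2} \subset \partial W$. Since $W$ is Stein, it is exact, hence symplectically aspherical (and automatically semipositive), so Theorem~\ref{thm: main theorem} forces $\beltSphere{2n-2}$ to be nullbordant in $\Omega_{2n-2}^{SO}(W)$ and even nullhomotopic in $W$ when $2n-1 = 5$.

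The main obstacle is turning this triviality of the belt sphere into a genuine contradiction for our specific $W$. The natural route is to strengthen the output of Theorem~\ref{thm: main theorem} using the underlying holomorphic disk machinery with a \LOB{} adapted to the belt sphere, producing an embedded separating $(2n-1)$-ball in $W$ with boundary $\beltSphere{2n-2}$ and hence a boundary connected sum decomposition $W \cong W_0 \natural W_1$ in which each $W_i$ is a weak filling of the corresponding summand. A topological contradiction then comes from the observation that in the $W$ of Step~1, the $n$-handles are attached along spheres that link both copies of the $(n-1)$-handle cocores, so no embedded separating $S^{2n-1}$ in $W$ can exist. Producing a genuinely embedded ball from the holomorphic disk moduli space in all dimensions $2n-1 \ge 5$, rather than only the bordism or contractibility statements of Theorem~\ref{thm: main theorem}, and certifying this linking obstruction on $W$, are the two places where the proof requires real work.
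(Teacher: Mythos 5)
Your first two paragraphs are broadly in the right spirit, but your third and fourth paragraphs diverge from what actually works, and the divergence is a genuine gap, not just a different route. You apply Theorem~\ref{thm: main theorem} as a black box, obtain that the belt sphere is null-bordant (or nullhomotopic when $2n-1=5$), and then try to promote this to an \emph{embedded} separating ball in $W$, which you hope to contradict via a linking argument among the handles of $W$. Neither of those last two steps is available. The moduli-space machinery produces an evaluation map $\ev \colon (\overline{\mM},\partial\overline{\mM}) \to (W,S)$ that is a diffeomorphism only on a small Bishop neighborhood; there is no mechanism in the argument to make the image of $\overline{\mM}$ an embedded ball, and the family of \LOB{}s technique inherently gives you a degree-one map from a product $\overline{\Sigma}\times\DD^2$, not an embedding. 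The ``linking obstruction'' you invoke is also left unjustified, and in fact would be the wrong thing to pursue: in dimensions $\geq 5$ you have no Alexander-type theorems controlling embedded separating spheres, and the $h$-cobordism theorem would happily give embedded balls in many situations you would like to rule out.

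The paper avoids both of these problems by a different choice of filling and a different way of extracting a contradiction. Rather than building $W$ as a Weinstein handlebody attached to $D^{2n}$, it takes $W = M^* \times [-1,1]$ where $M^* = M \setminus D$; the given Morse function on $M$ (with unique minimum and maximum, and otherwise index $n-1$ and $n$ only) restricts to $M^*$ and produces a Morse function on $W$ with boundary-convex gradient and critical points of index at most $n$, so Eliashberg's theorem applies directly. The crucial point is the explicit product structure: the belt sphere of the putative index-$1$ surgery sits as $S = \partial M^* \times \{0\}$, and Proposition~\ref{prop: topology of compactified moduli space} yields a moduli space $\overline{\mM} \cong \overline{\Sigma}\times\DD^2$ with evaluation map of degree one to $S$ and a diffeomorphism onto an open set. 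Composing $\ev$ with the projection $p\colon M^* \times [-1,1] \to M^*$ gives a map $f = p \circ \ev$ to which Lemma~\ref{lemma: map of product manifold induces contractibility} applies with $Y = M^*$, forcing $M^*$ to have trivial homology in positive degrees and hence $M$ to be a homotopy sphere---the desired contradiction. This sidesteps the need for an embedded ball entirely and works uniformly in all dimensions $2n-1 \geq 5$, not just when the moduli space is small. If you want to pursue your route, you would need a much sharper structural result about the image of the moduli space than Theorem~\ref{thm: main theorem} provides, and a nontrivial argument certifying the linking claim; neither is supplied here.
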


Note that the contact structures in the above statement are
necessarily tight in the sense of Borman-Eliashberg-Murphy
\cite{BormanEliashbergMurphy_wow}; this follows from Stein
fillability, using \cite{NiederkrugerPlastikstufe} and the observation
in \cite{BormanEliashbergMurphy_wow} that any overtwisted contact
structure is also PS-overtwisted.
The holomorphic disk techniques developed in this article can also be
used as in the work of Hofer \cite{HoferWeinstein} to prove the
Weinstein conjecture for a wide class of contact manifolds obtained by
subcritical surgery.
The following theorem, proved in Section~\ref{sec:Weinstein}, is
related to the well-known result that every subcritically Stein
fillable contact form admits a contractible Reeb orbit.
(A similar result specifically for index~$1$ surgeries appeared
recently in \cite{GeigesZehmisch}.)

\begin{theorem}\label{thm:Weinstein}
  Assume $(M',\xi')$ is the result of performing a contact surgery of
  index $k \le n-1$ on a closed contact manifold $(M,\xi)$ of
  dimension $2n-1 \ge 3$, with belt sphere $\beltSphere{2n-k-1}
  \subset M'$, and suppose at least one of the following conditions
  holds:
  \begin{enumerate}
  \item $[\beltSphere{2n-k-1}] \ne 0$ in $\Omega_{2n-k-1}^{SO}(M')$;
  \item $[\beltSphere{2n-k-1}] \ne 0$ in $\pi_{2n-k-1}(M')$ and
    either $\dim M' = 5$ or $\dim M' = 7$ with $k=3$;
  \item $\dim M' = 5$ and $(M',\xi')$ is a contact connected sum
    $(M_0,\xi_0) \conSum (M_1,\xi_1)$ with the following two
    properties:
    \begin{enumerate}
    \item Neither $M_0$ nor $M_1$ is homeomorphic to a sphere;
    \item If $M_0$ and $M_1$ are both rational homology spheres, then
      either both are not simply connected or at least one of them has
      infinite fundamental group.
    \end{enumerate}
  \end{enumerate}
  Then every contact form for $\xi$ admits a contractible Reeb orbit.
\end{theorem}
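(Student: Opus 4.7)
I would argue by contradiction, following Hofer's scheme from \cite{HoferWeinstein}: assume some contact form $\alpha$ for $\xi'$ has no contractible closed Reeb orbit, and deduce that $[\beltSphere{2n-k-1}]$ must vanish in $\Omega_{2n-k-1}^{SO}(M')$, and even in $\pi_{2n-k-1}(M')$ when $\dim M'=5$ or $\dim M' = 7$ with $k=3$. This contradicts hypotheses~(1) or~(2); case~(3) will be reduced to~(2) by a purely topological lemma.

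First, I would reuse the local model used in the proof of Theorem~\ref{thm: main theorem}: the subcritical Weinstein handle carries a canonical family of Legendrian open books, a \LOB, whose associated disks sweep out a neighborhood of $\beltSphere{2n-k-1}$. Instead of attaching this model to a symplectic filling as there, I would glue it to the negative symplectization $\bigl((-\infty,0]\times M',\,d(e^t\alpha)\bigr)$, equipped with an $\omega$-tame almost complex structure that is cylindrical on the negative end and standard on the handle model. One then considers the moduli space $\mM$ of finite-energy holomorphic disks with boundary on the \LOB; provided $\mM$ is compact, its evaluation map exhibits $\beltSphere{2n-k-1}$ as the boundary of a singular bordism into $M'$ (or, in the low-dimensional refinement, as a continuous nullhomotopy), by exactly the mechanism used for Theorem~\ref{thm: main theorem}(b).

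The hard part will be compactness of $\mM$ in this non-compact target. The usual action and exactness considerations rule out interior sphere and disk bubbling, so the only remaining form of degeneration is SFT-style breaking at the cylindrical end. By Hofer's asymptotic analysis, any broken lower level would be a finite-energy holomorphic plane in $\RR\times M'$ asymptotic to a closed Reeb orbit that is \emph{contractible in $M'$} — precisely what the hypothesis excludes. Consequently no breaking occurs and $\mM$ is compact, yielding the desired contradiction with hypothesis~(1) or~(2).

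For case~(3) the surgery has index $k=1$ and $\beltSphere{3}$ is the $\SS^3$ separating the two summands of $M_0 \conSum M_1$. It suffices, via case~(2), to check that this $\SS^3$ is not nullhomotopic in $M'$, and this is a topological statement independent of the contact structure. The hypotheses that neither summand is homeomorphic to a sphere, together with the stated condition on $\pi_1$ and rational homology, allow one to exclude the possibility that the connect-sum sphere bounds a homotopically trivial region on either side by a standard Mayer--Vietoris and covering-space argument, completing the reduction.
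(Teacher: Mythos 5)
Your treatment of conditions~(1) and~(2) is essentially identical to the paper's: both arguments follow Hofer's scheme, building a moduli space of disks on the family of \LOB{}s near the belt sphere in a symplectization of $M'$ equipped with an almost complex structure interpolating between a cylindrical negative end for $\alpha$ and the Weinstein model near $\beltSphere{2n-k-1}$, ruling out SFT-type breaking at the negative end by the assumed absence of contractible Reeb orbits (exactness excludes sphere and disk bubbling), and then quoting the topological conclusions of Theorem~\ref{thm: main theorem}(b). The minor difference of gluing to $(-\infty,0]\times M'$ versus deforming the symplectic form on the full cylinder $\RR\times M'$ is inessential.

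However, there is a genuine gap in your treatment of condition~(3). You reduce it correctly to showing that, under hypotheses~(a) and~(b), the separating $\SS^3$ in a $5$-dimensional connected sum $M_0\conSum M_1$ cannot be nullhomotopic. But your claim that ``a standard Mayer--Vietoris and covering-space argument'' excludes this is not adequate. A nullhomotopic embedded codimension-one sphere need not bound a homotopy ball on either side; the paper's actual argument invokes Ruberman's theorem (Theorem~\ref{thm: con_sum contractible belt sphere}), which says precisely that a nullhomotopic embedded codimension-one sphere either bounds a homotopy ball or exhibits $M$ as a connected sum of two rational homology spheres, one simply connected and the other with finite fundamental group. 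Hypothesis~(a) rules out the first alternative and hypothesis~(b) rules out the second, and it is exactly this dichotomy --- a nontrivial theorem of geometric topology, not an elementary Mayer--Vietoris or covering-space computation --- that makes the reduction work. Without citing such a result (or proving a version of it), your case~(3) argument does not close: in particular, there genuinely exist connected sums of non-spheres in which the separating sphere \emph{is} nullhomotopic, which is why hypothesis~(b) is needed and why a soft argument that ignores the specific structure of the second Ruberman alternative cannot succeed.
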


Before discussing the proofs, some further remarks about the main
theorem are in order.

\begin{remark}
  We do not know whether the dimensional restriction for the
  contractibility result in part~(b) of Theorem~\ref{thm: main
    theorem} is essential, but given the wide range of known contact
  geometric phenomena that can happen \emph{only} in sufficiently high
  dimensions, we consider it plausible that the contractibility
  statement could be false without some restriction of this type (thus
  implying a definitively negative answer to
  Question~\ref{question:main} in general).
  It is apparent in any case that our method will not work in all
  dimensions, as the improvement from ``null-bordant'' to
  ``nullhomotopic'' involves subtle topological difficulties that
  increase with the dimension; see the beginning of Section~\ref{sec:
    surgery on the moduli space} for more discussion of this.
\end{remark}

\begin{remark}
  It is clear that nothing like Theorem~\ref{thm: main theorem} can be
  true for \emph{critical surgeries} in general, i.e.~the case $k=n$.
  There are obvious counterexamples already in dimension three, as any
  Legendrian knot $L \subset (M',\xi')$ can be viewed as the belt
  sphere arising from a critical contact surgery---take $(M,\xi)$ in
  this case to be the result of a Legendrian $(+1)$-surgery along~$L$.
  It is certainly not true in general that arbitrary Legendrian knots
  are nullhomologous in every filling of $(M,\xi)$!
\end{remark}

\begin{remark}\label{remark:aspherical}
  The semipositivity assumption in part~(a) of Theorem~\ref{thm: main
    theorem} is there for technical reasons and could presumably be
  lifted using more advanced technology (e.g.~polyfolds, see
  \cite{HoferWZ_GW}).
  In contrast, symplectic asphericity in part~(b) is a geometrically
  meaningful condition that, while not needed for Eliashberg's
  three-dimensional version of this result, cannot generally be
  removed in higher dimensions; see Example~\ref{ex:aspherical} below.
  The answer to Question~\ref{question:main} thus becomes negative
  without this assumption.
\end{remark}

\begin{example}\label{ex:aspherical}
  The blowup of the total space of the rank~$2$ holomorphic vector
  bundle ${\mathcal O}(-2) \oplus {\mathcal O}$ over $\CC P^1$ at the
  zero section can be viewed as a (not symplectically aspherical) weak
  filling of a subcritically Stein fillable contact manifold
  $(M',\xi')$ containing a belt sphere that is homotopically
  nontrivial in the filling.
  This is a special case of the following construction, which gives
  examples with subcritical handles of any even index $k=2m \ge 2$ in
  any dimension $2n \ge 2k+2 \ge 6$.
  Choose integers $m , \ell \ge 1$ and set $n = 2m + \ell$, and
  suppose $(\Sigma,\sigma)$ is a $2m$-dimensional closed symplectic
  manifold.
  Then consider the $2n$-dimensional Weinstein manifold $T^*\Sigma
  \times \CC^\ell$, i.e.~the $\ell$-fold stabilization of $T^*\Sigma$
  with its standard Weinstein structure, and denote its ideal contact
  boundary by $(M',\xi')$.
  Any Morse function on $\Sigma$ gives rise to a Weinstein handle
  decomposition of $T^*\Sigma \times \CC^\ell$, such that the
  function's maximum $q \in \Sigma$ corresponds to an
  $(n-\ell)$-handle whose belt sphere $\beltSphere{n+\ell-1}$ is
  isotopic to the unit sphere in $T_q^*\Sigma \times \CC^\ell$.
  Let $\Sigma \subset T^*\Sigma$ denote the zero section, so $\Sigma
  \times \{0\}$ is an isotropic submanifold in $T^*\Sigma \times
  \CC^\ell$, and denote by $\pi \colon T^*\Sigma \times \CC^\ell \to
  \Sigma \times \{0\}$ the obvious projection.
  Then for any $\epsilon > 0$ sufficiently small, adding $\epsilon
  \pi^*\sigma$ to the natural exact symplectic form on $T^*\Sigma
  \times \CC^\ell$ gives a weak filling of $(M',\xi')$ with $\Sigma
  \times \{0\}$ as a symplectic submanifold.
  We can then blow up along this submanifold, as explained in
  \cite[Section~7.1]{McDuffSalamonIntro}.
  This produces a new weak filling $(W',\omega')$ of $(M',\xi')$, in
  which the belt sphere $\beltSphere{n+\ell-1} \subset M'$ is
  nullhomologous but homotopically nontrivial:
  indeed, every fiber $T_q^*\Sigma \times \CC^\ell$ has now been
  replaced by its blowup at the point $(0,0)$, which can be viewed as
  the tautological line bundle over $\CC P^{m+\ell-1}$, so the bundle
  projection sends $\beltSphere{n+\ell-1}$ to a generator of
  $\pi_{2(m+\ell)-1}(\CC P^{m+\ell-1}) \cong \ZZ$.
  The special case with $\Sigma = \SS^2$ and $\ell=1$ gives the
  construction described at the beginning of this example, because the
  total space of ${\mathcal O}(-2)$ is a deformation of $T^*\SS^2$.
\end{example}

The following represents another easy application of Theorem~\ref{thm:
  main theorem}.

\begin{example}
  Suppose $(M_1,\xi_1)$ is a contact $5$-manifold obtained by a
  subcritical surgery of index~$2$ on a sphere $(\SS^5,\xi)$, where
  $\xi$ is any contact structure.
  Then $M_1$ is diffeomorphic to either $\SS^2 \times \SS^3$ or
  $\SS^2\tilde \times \SS^3$, i.e.~the trivial or nontrivial
  $3$-sphere bundle over the $2$-sphere.
  Indeed, closed loops in $\SS^5$ are automatically unknotted, and the
  possible framings of the surgery are classified by the elements of
  $\pi_1\bigl(\SO(3)\bigr) \cong \ZZ_2$.
  If $(W,\omega)$ is any symplectically aspherical weak filling of
  $(M_1,\xi_1)$, then by Theorem~\ref{thm: main theorem}, the fiber
  $\{p\}\times \SS^3$ is a contractible $3$-sphere in $W$.
  Now take $(M_2,\xi_2)$ to be the unit cotangent bundle of the
  $3$-sphere or, more generally, any contact manifold supported by a
  contact open book with page $T^*\SS^2$ and monodromy isotopic to a
  $2k$-fold product of Dehn twists for some integer $k \ge 1$.
  Then $M_2$ will be diffeomorphic to $\SS^2\times \SS^3$, but
  $(M_2,\xi_2)$ admits a Stein filling that contracts to a bouquet of
  $2k-1$ three-dimensional spheres (see
  e.g.~\cite{vanKoertNiederkruger2005}).
  We conclude that whenever $(M_1,\xi_1)$ admits a symplectically
  aspherical weak filling, it is not contactomorphic to $(M_2,\xi_2)$.
  This implies for instance that the contact structures induced on the
  ideal contact boundaries of $T^*\SS^3$ and $T^*\SS^2 \times \CC$
  (cf.~Remark~\ref{remark:aspherical}) are not isomorphic.
  There are presumably other ways to distinguish $\xi_1$ and $\xi_2$
  in many cases, e.g.~using Symplectic Homology, but the technique
  described above is much more topological.
\end{example}

\subsection{Idea of the proof}
\label{sec:proof}

Our proof of Theorem~\ref{thm: main theorem} is based on a
higher-dimensional analogue of the disk-filling methods underlying
Eliashberg's result in dimension three \cite{Eliashberg_filling}.
Such methods work whenever one can find a suitable submanifold to
serve as a boundary condition for holomorphic disks, and the natural
object to consider in this case is known as a \emph{Legendrian open
  book} or ``\LOB{}''.
Let us recall the definition, which is due originally to the second
author and Rechtman \cite{NiederkrugerRechtman}.

\begin{definitionNumbered}\label{defn:Lob}
  A \LOB{} in a $(2n-1)$-dimensional contact manifold $(M,\xi)$ is a
  closed $n$-dimensional submanifold $L \subset M$ equipped with an
  open book decomposition $\pi \colon L \setminus B \to \SS^1$ whose
  binding $B \subset L$ is an $(n-2)$-dimensional isotropic
  submanifold of $(M,\xi)$, and whose pages $\pi^{-1}(*)$ are each
  Legendrian submanifolds of $(M,\xi)$.
\end{definitionNumbered}

The simplest interesting example of a \LOB{} occurs at the center of
the ``neck'' in any $3$-dimensional contact connected sum:
here we find a $2$-sphere $S \subset M$ on which the characteristic
foliation $\xi \cap TS$ traces an $\SS^1$-family of longitudes
connecting the north and south poles, so we can regard the longitudes
as pages of an open book with the poles as binding.
Such spheres were used as totally real boundary conditions for
holomorphic disks in \cite{BedfordGaveau, Gromov_HolCurves,
  Eliashberg_filling}, and similarly in Hofer's proof of the Weinstein
conjecture \cite{HoferWeinstein} for contact $3$-manifolds $(M,\xi)$
with $\pi_2(M) \ne 0$.
In higher dimensions, a \LOB{} $L \subset (M,\xi)$ with binding $B
\subset L$ similarly defines a totally real submanifold $\{0\} \times
(L \setminus B)$ in the symplectization $\RR \times M$ of $(M,\xi)$,
and thus serves as a natural boundary condition for pseudoholomorphic
disks.
\begin{figure}[htbp]
  \centering
  \includegraphics[height=3cm,keepaspectratio]{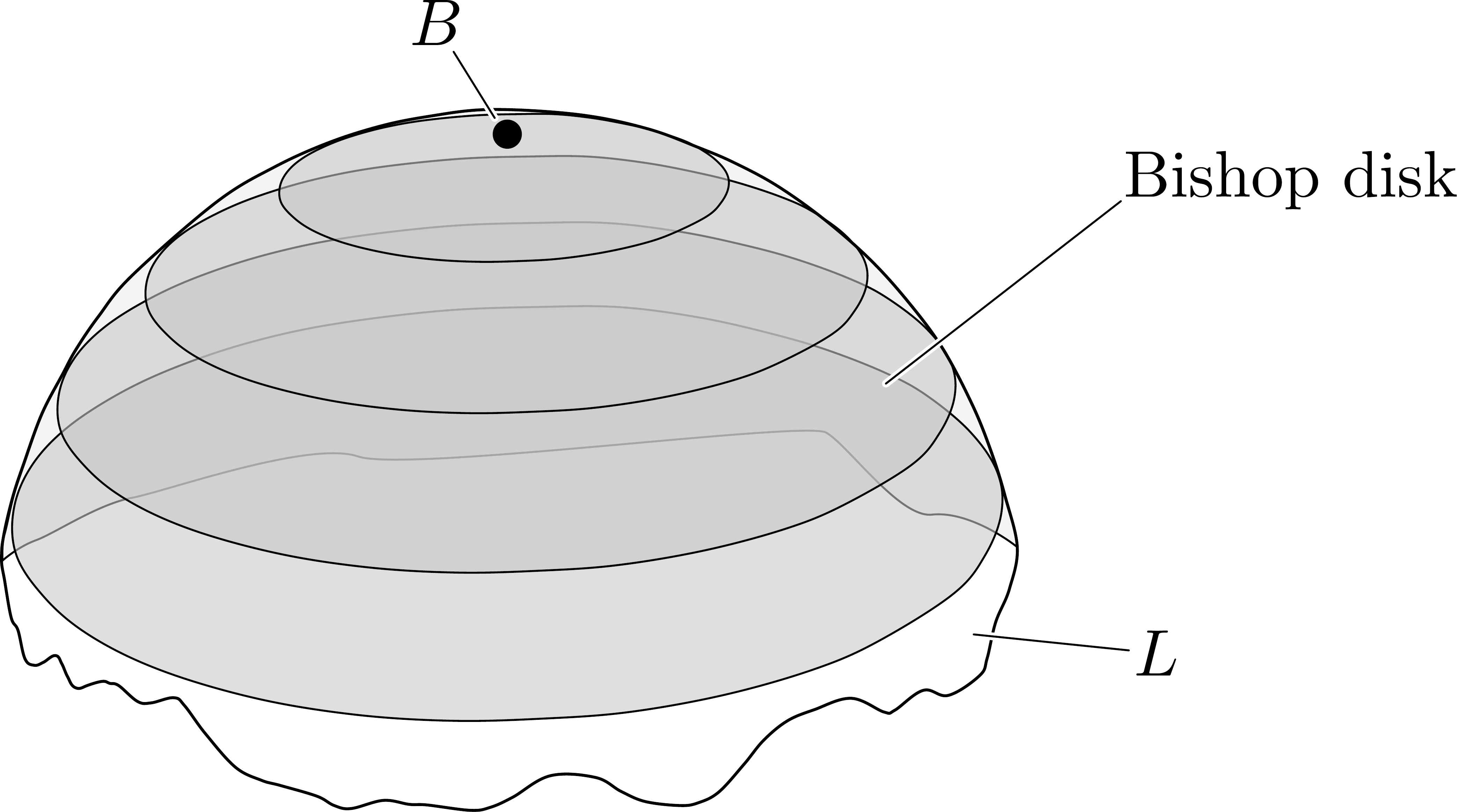}
  \caption{A schematic picture of the Bishop family around the binding
    of a \LOB~$L$.}\label{fig: schematic Bishop family}
\end{figure}
Moreover, for a suitably ``standard'' choice of almost complex
structure near the binding, a \LOB{} always gives rise to a canonical
family of holomorphic disks near $\{0\} \times B$ whose boundaries
foliate a neighborhood of $B$ in $L \setminus B$ (see Figure~\ref{fig:
  schematic Bishop family}).
This is the so-called \defin{Bishop family} of holomorphic disks, and
it has the useful property that no other holomorphic curve can enter
the region occupied by the Bishop disks from outside.
For a unified treatment of the essential analysis for Bishop disks
with boundary on a \LOB{}, see \cite{NiederkrugerHabilitation}.
As in the $3$-dimensional case, we will see that the belt sphere of a
surgery of index $n-1$ on a contact $(2n-1)$-manifold is also
naturally a \LOB{}, so there is again a natural moduli space of
holomorphic disks that fill the belt sphere, implying that it is
nullhomologous.
For surgeries of index $k < n-1$, the belt sphere has dimension
$2n-k-1 > n$, and thus cannot be a \LOB{}, but we will show that after
a suitable deformation, the belt sphere can be viewed as a
parametrized \emph{family} of \LOB{}s, giving rise to a well-behaved
moduli space of disks with moving boundary condition.
It should now be clear why this method cannot work for critical
surgeries: the belt sphere in this case has dimension $n-1$, so it is
too small to define a totally real boundary condition.
The construction of the family of \LOB{}s foliating a general
subcritical belt sphere is somewhat less than straightforward: as we
will see in Section~\ref{sec: description handle and surgery}, the
standard model for a contact form after surgery does not lend itself
well to this construction, but a natural family of \LOB{}s can be
found after deforming to a different model of the belt sphere as
piecewise smooth boundary of a poly-disk.
Let us now discuss the topological reasons why the family of \LOB{}s
foliating $\beltSphere{2n-k-1} \subset (M',\xi')$ in Theorem~\ref{thm:
  main theorem} places constraints on the filling $(W',\omega')$.
We focus for now on the case where $(W',\omega')$ is symplectically
aspherical, which rules out bubbling of holomorphic spheres.
For a suitable choice of tame almost complex structure~$J$
on~$(W',\omega')$, the Bishop families associated to
$\beltSphere{2n-k-1}$ generate a compactified moduli space~$\overline
\mM$ of $J$-holomorphic disks in $W'$ with one marked point, whose
boundaries are mapped to $\beltSphere{2n-k-1}$.
In light of the marked point, this moduli space is necessarily
diffeomorphic to a manifold with boundary and corners of the form
\begin{equation*}
  \overline \Sigma \times \DD^2 \;,
\end{equation*}
where $\overline \Sigma$ is a smooth, compact, connected and oriented
manifold with boundary and corners, whose boundary is a sphere.
Furthermore, the natural evaluation map
\begin{equation*}
  \ev \colon \bigl(\overline\mM, \p\overline \mM\bigr)
  \to \bigl(W',\beltSphere{2n-k-1}\bigr)
\end{equation*}
is smooth, and its restriction
\begin{equation*}
  \restricted{\ev}{\p \overline \mM} \colon 
  \p\overline \mM \to \beltSphere{2n-k-1}
\end{equation*}
is a continuous map of degree~$\pm 1$.
The latter follows readily from the properties of the Bishop family,
which provide a nonempty open subset $U\subset W'$ that intersects
$\beltSphere{2n-k-1}$ and is the diffeomorphic image of $\ev^{-1}(U)
\subset \overline\mM$.
This description of the evaluation map $\ev \colon \overline{\mM} \to
W'$ already implies the homological part of Theorem~\ref{thm: main
  theorem}, i.e.~that $[\beltSphere{2n-k-1}] = 0 \in H_{2n-k-1}(W')$.
To deduce stronger constraints, we will apply two further techniques.
The first consists in performing surgery on the moduli space
$\overline{\mM}$ to simplify its topology and suitably extending the
evaluation map in order to prove $[\beltSphere{2n-k-1}] = 0 \in
\pi_{2n-k-1}(W')$.
This method works when the dimension of $\overline{\mM}$ is not too
large.
The second method is relevant in particular to the case $k=0$ of the
main result, as well as to Theorem~\ref{thm: almost contact connSum
  not genuine}, and is based on the following topological lemma.

\begin{lemma}\label{lemma: map of product manifold induces
    contractibility}
  Let $X,Y$ be smooth orientable compact manifolds with boundary and
  corners such that $\p Y$ is homeomorphic to a sphere and $\dim X + 2
  = \dim Y \ge 3$.
  Write $X' = X\times \DD^2$, and assume that
  \begin{equation*}
    f\colon (X',\p X')\to (Y,\p Y)
  \end{equation*}
  is a continuous map that is smooth on the interior of $X'$, and for
  which we find an open subset~$U\subset \mathring Y$ such that
  $\restricted{f}{f^{-1}(U)} \colon f^{-1}(U) \to U$ is a
  diffeomorphism.
  Then $Y$ is contractible.
\end{lemma}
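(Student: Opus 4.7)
The plan is to prove that $Y$ is simply connected and has trivial reduced integer homology, then conclude by Hurewicz and Whitehead plus the fact that a compact smooth manifold has the homotopy type of a CW complex. The guiding observation is that, since $X' = X \times \DD^2$, every homology class and every loop in $X'$ admits a representative inside $X \times S^1 \subset \p X'$, whose image under $f$ lies in $\p Y \cong S^{n-1}$.

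First I would verify that $f$ is a map of pairs of degree $\pm 1$: any regular value $p \in U$ has exactly one preimage contributing $\pm 1$ to the signed count, because $f|_{f^{-1}(U)}$ is a diffeomorphism. Combining the cap-product identity $f_*\bigl(f^*\alpha \frown [X',\p X']\bigr) = \alpha \frown [Y, \p Y]$ with Poincaré--Lefschetz duality --- which makes $\frown [Y, \p Y] \colon H^{n-k}(Y, \p Y) \to H_k(Y)$ an isomorphism --- we get that $f_* \colon H_*(X'; \ZZ) \to H_*(Y; \ZZ)$ is surjective in every degree. By Künneth the inclusion $X \times S^1 \subset \p X' \hookrightarrow X'$ is surjective on $H_k$, since $H_k(X \times S^1) = H_k(X) \oplus H_{k-1}(X)$ projects onto $H_k(X) = H_k(X')$. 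The commutative square
\[
\begin{array}{ccc}
H_k(\p X') & \longrightarrow & H_k(\p Y) \\
\downarrow & & \downarrow \\
H_k(X') & \xrightarrow{f_*} & H_k(Y)
\end{array}
\]
then forces $\im f_*$ to lie in the image of $H_k(\p Y) \to H_k(Y)$; for $0 < k < n-1$ this vanishes because $\p Y \cong S^{n-1}$, so the surjectivity of $f_*$ yields $H_k(Y) = 0$. For $k \geq n-1$ the conclusion $H_k(Y) = 0$ is immediate from $\dim X = n-2$, since then $H_k(X') = H_k(X) = 0$.

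The same mechanism handles $\pi_1$. Every loop in $X' = X \times \DD^2$ is freely homotopic to a loop lying in $X \times \{p_0\} \subset \p X'$ for any $p_0 \in \p \DD^2$, whose $f$-image lies in $\p Y \cong S^{n-1}$; because $n-1 \geq 2$, this image is nullhomotopic there, so $f_* \colon \pi_1(X') \to \pi_1(Y)$ is the zero map. The classical covering-space argument --- lifting $f$ to the connected cover of $Y$ corresponding to $f_*(\pi_1(X')) \leq \pi_1(Y)$ and using multiplicativity of degrees along the lift --- shows $f_*$ is simultaneously $\pi_1$-surjective, so $\pi_1(Y) = 1$. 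The main subtlety I anticipate is the bookkeeping of orientations and Poincaré--Lefschetz duality for manifolds with corners and non-spherical boundary $\p X'$, plus verifying that the lift to the cover is still a map of pairs of the appropriate degree; once this is checked, Hurewicz and Whitehead immediately complete the proof.
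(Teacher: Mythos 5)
Your proof is correct and reaches the same conclusion, but by a genuinely different route. Both you and the paper reduce the problem to showing that $Y$ is simply connected with trivial reduced integral homology, then invoke Hurewicz and Whitehead. The differences are in how these two facts are established.

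For homology, you first record that $f$ has degree $\pm 1$ (one preimage of a regular value in $U$) and then invoke the standard algebraic fact that a degree-$\pm 1$ map of compact oriented manifold pairs is surjective on all homology groups, via the projection formula $f_*\bigl(f^*\alpha \frown [X',\p X']\bigr) = \alpha \frown f_*[X',\p X']$ and Poincar\'e--Lefschetz duality. Combined with the observation that every class in $H_k(X')$ is represented in $\p X'$ (push into the $\DD^2$-factor), this immediately kills all $H_k(Y)$ with $k > 0$. The paper instead re-derives the surjectivity of $f_*$ geometrically: it inductively uses Hurewicz to represent a class in $H_k(Y)$ by an embedded sphere, pulls it back transversally, and identifies the degree-one preimage component, with a separate diagonal-intersection argument in the middle dimension $k = \tfrac{1}{2}\dim Y$; the complementary degrees are then handled by duality. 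Your version avoids the induction, the Hurewicz-representability step, and the middle-dimension case distinction entirely, at the cost of invoking the cap-product machinery.

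For $\pi_1$, you show $f_*\colon\pi_1(X')\to\pi_1(Y)$ is the zero map (again by pushing loops into $\p X'$) and then appeal to the covering-space/degree-multiplicativity argument to get $\pi_1$-surjectivity, concluding $\pi_1(Y)=1$. The paper argues in the opposite direction: it takes a loop $\gamma$ in $Y$ through the base point in $U$, pulls it back transversally, identifies the degree-one preimage component $\Gamma_0$, and pushes $\Gamma_0$ into $\p X'$ to conclude $\gamma$ is freely homotopic into the sphere $\p Y$. Both are valid; the paper's is more self-contained and geometric, yours leans on the standard package of facts about degree-one maps. The subtleties you flag (corners, orientations, lifting $f$ as a map of pairs with $\tilde f(\p X')\subset p^{-1}(\p Y)$, and multiplicativity of degree including the infinite-cover case where $H_n(\tilde Y,\p\tilde Y)=0$) are genuine but routine, and you have correctly identified what needs checking.
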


While it will not be essential to most of our arguments, note that the
$h$-cobordism theorem implies:

\begin{corollary}\label{coro: h-cobordism}
  If $\dim Y \ge 5$ in the lemma, then $Y$ is diffeomorphic to a ball.
\end{corollary}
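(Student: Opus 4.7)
The plan is to realize $Y$ as the union of a standard ball with a trivial $h$-cobordism, following the classical argument by which Smale's theorem yields the higher-dimensional Poincaré conjecture. The Lemma has already done the hard analytic work by establishing that $Y$ is contractible; what remains is purely topological.

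First I would remove a small smooth open ball $B$ from the interior $\mathring Y$ and set $\Wcob := Y \setminus \mathring B$, a smooth compact manifold (with corners only along $\p Y$) whose boundary decomposes as $\p Y \sqcup \p B$, where $\p B$ is standardly diffeomorphic to $\SS^{n-1}$ and $\p Y$ is only assumed homeomorphic to $\SS^{n-1}$ (here $n = \dim Y \ge 5$). The main step is to verify that $\Wcob$ is a simply connected $h$-cobordism between these two spheres. Van Kampen applied to $Y = \Wcob \cup B$, together with the contractibility of $Y$ and the simple-connectivity of $B$ and $\p B$, yields $\pi_1(\Wcob) = 1$ (we need $n - 1 \ge 2$, which holds since $n \ge 5$). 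Excision and the long exact sequence of the pair $(Y,\Wcob)$ give
\begin{equation*}
  H_*(Y, \Wcob) \;\cong\; H_*(B, \p B) \;\cong\; \tilde H_{*-1}(\SS^{n-1}),
\end{equation*}
and combining with $\tilde H_*(Y) = 0$ shows $H_*(\Wcob) \cong H_*(\SS^{n-1})$, with $H_{n-1}(\Wcob) \cong \ZZ$ generated by $[\p B]$. Since $\p B$ and $\p Y$ together bound the oriented cobordism $\Wcob$, the class $[\p Y]$ also generates $H_{n-1}(\Wcob)$. Whitehead's theorem then promotes the inclusions $\p B \hookrightarrow \Wcob \hookleftarrow \p Y$ to homotopy equivalences.

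With $\Wcob$ confirmed to be a simply connected smooth $h$-cobordism of dimension $n \ge 5$, the $h$-cobordism theorem (Smale for $n \ge 6$; the topological version of Freedman for $n = 5$ if needed) produces a diffeomorphism $\Wcob \cong \SS^{n-1} \times [0,1]$ that restricts to the identity on $\p B$. Regluing $B$ along this product structure identifies
\begin{equation*}
  Y \;=\; \Wcob \cup_{\p B} B \;\cong\; \bigl(\SS^{n-1}\times [0,1]\bigr) \cup_{\SS^{n-1}\times\{1\}} \DD^n \;\cong\; \DD^n,
\end{equation*}
which is the desired conclusion.

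The only substantive step is checking the $h$-cobordism conditions on $\Wcob$; this is a routine Mayer--Vietoris/Whitehead exercise once the Lemma is available. The main conceptual ``obstacle,'' if any, is purely bookkeeping about the dimension range needed to apply the $h$-cobordism theorem smoothly versus topologically and the fact that $\p Y$ is a priori only a topological sphere—but this is precisely handled by the assumption $\dim Y \ge 5$ in the corollary.
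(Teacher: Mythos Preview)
Your argument is correct and is exactly the standard unpacking of what the paper means when it says simply ``the $h$-cobordism theorem implies''; the paper gives no further details beyond that citation. One small caveat worth flagging: for $\dim Y = 5$ you appeal to Freedman, but Freedman's theorem is topological and yields only a \emph{homeomorphism} $\Wcob \cong \SS^4 \times [0,1]$, hence only $Y \approx \DD^5$ rather than a diffeomorphism---the smooth $h$-cobordism theorem genuinely needs $\dim \Wcob \ge 6$. The paper is equally loose on this point, and in all of its applications $Y$ has even dimension $2n \ge 6$, so the issue never actually arises.
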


The $k=0$ case of Theorem~\ref{thm: main theorem} is the case where
$(M',\xi')$ is the standard contact sphere and the belt sphere is the
entirety of~$M'$.
In this setting, applying the above lemma to the evaluation map $\ev
\colon (\overline{\mM} = \overline{\Sigma} \times
\DD^2,\p\overline{\mM}) \to (W',\SS^{2n-1})$ implies that $W'$ must be
diffeomorphic to a ball, hence this reproves the
Eliashberg-Floer-McDuff theorem.
We will explain this argument in more detail in \S\ref{sec:EFM},
including the proof of the lemma (see
Lemma~\ref{lemma:KlausTopology}).
In another context, we will also apply the lemma in \S\ref{sec: almost
  contact surgery not genuine surgery} to demonstrate that the contact
structures arising on the boundaries of certain Stein domains which
look topologically like connected sums cannot arise from index~$1$
contact surgery, thus proving Theorem~\ref{thm: almost contact connSum
  not genuine}.
Here is a brief outline of the paper.
In Section~\ref{sec:EFM}, we provide a foretaste of the methods in the
rest of the paper by using them to give an alternative proof of the
Eliashberg-Floer-McDuff theorem.
Section~\ref{sec: description handle and surgery} then explains the
general case of the family of \LOB{}s associated to a subcritical belt
sphere.
In Section~\ref{sec:moduliSpace}, we define the relevant moduli space
of holomorphic disks and establish its basic properties, leading to
the proof of the homological part of Theorem~\ref{thm: main theorem}.
Section~\ref{sec: surgery on the moduli space} then improves this to a
homotopical statement in cases where the moduli space has sufficiently
low dimension.
Finally, in \S\ref{sec: almost contact surgery not genuine surgery}
and \S\ref{sec:Weinstein} respectively we prove Theorems~\ref{thm:
  almost contact connSum not genuine} and~\ref{thm:Weinstein} on
contact connected sums and contractible Reeb orbits.
The paper concludes with a brief appendix addressing the technical
question of orientability for our moduli space of holomorphic disks.

\section*{Acknowledgments}
The authors would like to thank Yasha Eliashberg, François Laudenbach,
Patrick Massot, Emmy Murphy, Otto van Koert and András Stipsicz for
useful conversations.
They are also very grateful to the referees for careful reading and
suggesting many improvements.
The first and second authors were partially supported during this
project by ANR grant \textsl{ANR-10-JCJC 0102}.
The third author is partially supported by a Royal Society
\textsl{University Research Fellowship} and by EPSRC grant
\textsl{EP/K011588/1}.
Additionally, all three authors gratefully acknowledge the support of
the European Science Foundation's ``CAST'' research network.

\section{The Eliashberg-Floer-McDuff theorem revisited}
\label{sec:EFM}

In this section we modify slightly the proof of the
Eliashberg-Floer-McDuff theorem \cite[Theorem~1.5]{McDuff_contactType}
in order to illustrate the methods that will be applied in the rest of
the article.
The original argument worked by capping off the symplectic filling and
then sweeping through it with a moduli space of holomorphic spheres.
Our version will be the same in many respects, but has more in common
with the $3$-dimensional argument of Eliashberg in
\cite{Eliashberg_filling}: instead of spheres, we use holomorphic
disks attached to a family of \LOB{}s.

\begin{theorem}[Eliashberg-Floer-McDuff]\label{theorem: ElFlMD
    revisited}
  Let $\SS^{2n-1} \subset \CC^n$ be the unit sphere with its standard
  contact structure~$\xi_0$ given by the complex tangencies to the
  sphere, that is,
  \begin{equation*}
    \xi_0 = T\SS^{2n-1} \cap  \bigl(i\cdot T\SS^{2n-1}\bigr) \;.
  \end{equation*}
  Every symplectically aspherical filling of $\bigl(\SS^{2n-1},
  \xi_0\bigr)$ is diffeomorphic to the $(2n)$-ball.
\end{theorem}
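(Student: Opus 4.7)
The strategy is to recognise $(\SS^{2n-1},\xi_0)$ as the belt sphere of the ``$0$-handle'' $\DD^{2n}$, that is, the $k=0$ case of Theorem~\ref{thm: main theorem}, and then to strengthen the topological conclusion from contractibility of the filling to a diffeomorphism via the $h$-cobordism theorem.  Rather than capping off as in the original proof of McDuff, I would follow Eliashberg's three-dimensional argument in spirit and use holomorphic disks with boundary on a family of \LOB{}s filling $\SS^{2n-1}$.

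First I would realise $\SS^{2n-1}$, possibly after deforming to the piecewise smooth boundary of a poly-disk model, as the total space of a smooth $(n-1)$-dimensional family $\{L_t\}$ of \LOB{}s, with the general construction deferred to Section~\ref{sec: description handle and surgery}.  On the weak filling $(W',\omega')$ I would then choose an $\omega'$-tame almost complex structure $J$ that matches a standard integrable model in a neighbourhood of each binding, so that the canonical Bishop disks are genuine $J$-holomorphic disks with totally real boundary on the appropriate $L_t$.  Starting from the Bishop family I would assemble the compactified moduli space $\overline{\mM}$ of $J$-holomorphic disks with one interior marked point whose boundaries lie on $\bigcup_t L_t$.

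The main analytic obstacle is the proof that $\overline{\mM}$ is a smooth compact connected oriented manifold with boundary and corners diffeomorphic to $\overline{\Sigma}\times\DD^2$, for some compact oriented $(2n-2)$-dimensional $\overline{\Sigma}$ with spherical boundary, and that the evaluation map
\begin{equation*}
  \ev \colon (\overline{\mM},\p\overline{\mM}) \longrightarrow (W',\SS^{2n-1})
\end{equation*}
is smooth, has degree $\pm 1$ on the boundary, and restricts to a diffeomorphism onto a nonempty open subset of $W'$ supplied by the Bishop family itself.  Symplectic asphericity rules out sphere bubbling, while the totally real Legendrian boundary condition combined with the standard model near the binding rules out disk bubbling; the Bishop disks further provide a barrier that prevents any other $J$-holomorphic disk from entering the region they foliate, guaranteeing that $\ev$ is injective there.

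Granted this structure theorem for $\overline{\mM}$, Lemma~\ref{lemma: map of product manifold induces contractibility} applied with $X=\overline{\Sigma}$ and $Y=W'$ immediately gives that $W'$ is contractible, and Corollary~\ref{coro: h-cobordism} then promotes this to a diffeomorphism $W'\cong \DD^{2n}$ whenever $\dim W'\ge 5$; the low-dimensional cases $n\le 2$ are classical.
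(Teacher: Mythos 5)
Your outline follows the paper's argument essentially step for step: deform the sphere to the boundary of a poly-disk, foliate the appropriate face by an $(n-1)$-disk family of spherical \LOB{}s, build the compactified moduli space of marked Bishop-type disks as a trivial $\DD^2$-bundle $\overline{\Sigma}\times\DD^2$, show $\ev$ is a diffeomorphism near the binding and has boundary degree~$\pm 1$, then invoke Lemma~\ref{lemma: map of product manifold induces contractibility} and Corollary~\ref{coro: h-cobordism}. The only cosmetic imprecision is that the \LOB{} family covers only the piece $S_B$ of the deformed sphere (the remaining piece $S_A$ carries boundary Bishop disks), and that the lemma is applied to the extended filling $\widehat W$ rather than directly to $W'$, but these match the paper once the deferred construction is filled in.
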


Let $\z = \x + i\y = \bigl(x_1+iy_1, \dotsc, x_n+iy_n\bigr)$ be the
coordinates of $\CC^n$.
The function $f\colon \CC^n \to [0,\infty)$ given by
\begin{equation*}
  f(\z) = \sum_{j=1}^n  (x_j^2 + y_j^2)
\end{equation*}
is plurisubharmonic, and the unit sphere is the boundary of the ball
\begin{equation*}
  \DD^{2n} = \bigl\{ \z \in \CC^n\bigm|\, f(\z) \le 1 \bigr\} \;.
\end{equation*}
The main geometric ingredient needed for our proof is a foliation of
$\SS^{2n-1}$ (minus some singular subset) by a family of \LOB{}s, but
this idea does not seem to work when applied directly to the unit
sphere $f^{-1}(1)$.
Instead, we will deform the sphere to a different shape, which does
contain a suitable family of \LOB{}s that will suffice for our
purposes.
Define two functions $g_A, g_B\colon \CC^n \to [0,\infty)$ by
\begin{align*}
  g_A(\z) &= y_1^2 + \dotsm + y_{n-1}^2 \\
  g_B(\z) &= x_1^2 + \dotsm + x_{n-1}^2 + x_n^2 + y_n^2 \;.
\end{align*}
Note that $g_B$ is strictly plurisubharmonic and $g_A$ is weakly
plurisubharmonic as
\begin{align*}
  -dd^c g_A &= 2\, dx_1 \wedge dy_1 + \dotsm + 2\,dx_{n-1} \wedge dy_{n-1} \\
  -dd^c g_B &= 2\, dx_1 \wedge dy_1 + \dotsm + 2\,dx_{n-1} \wedge
  dy_{n-1} + 4\, dx_n \wedge dy_n \;.
\end{align*}
We will now consider the subset (see Fig.~\ref{fig: convex deformation
  sphere})
\begin{equation*}
  \widehat \DD^{2n} = \bigl\{ \z \in \CC^n\bigm|\, g_B(\z) \le 1 \bigr\}
  \cap \bigl\{ \z \in \CC^n\bigm|\, g_A(\z) \le 1 \bigr\} \;.
\end{equation*}
Up to reordering the coordinates, $\widehat \DD^{2n}$ is a
bi-disk~$\DD^{n+1}\times \DD^{n-1} \subset \RR^{2n}$, which clearly
contains the unit ball.
Its boundary is not a smooth manifold, but is nonetheless homeomorphic
to the unit sphere.
It decomposes as
\begin{equation*}
  \p \widehat \DD^{2n} \cong
  \DD^{n+1} \times \bigl(\p \DD^{n-1}\bigr)  \,\cup \,
  \bigl(\p \DD^{n+1}\bigr) \times \DD^{n-1}
  = S_A \cup S_B \;,
\end{equation*}
where we have used the notation
\begin{align*}
  S_A &:= \bigl\{g_A = 1\bigr\} \cap \p \widehat \DD^{2n} \cong
  \DD^{n+1} \times \SS^{n-2}
  \intertext{and}
  S_B &:= \bigl\{g_B = 1\bigr\} \cap \p \widehat \DD^{2n} \cong \SS^n
  \times \DD^{n-1} \;.
\end{align*}

\begin{figure}[htbp]
  \centering
  \includegraphics[height=5cm,keepaspectratio]{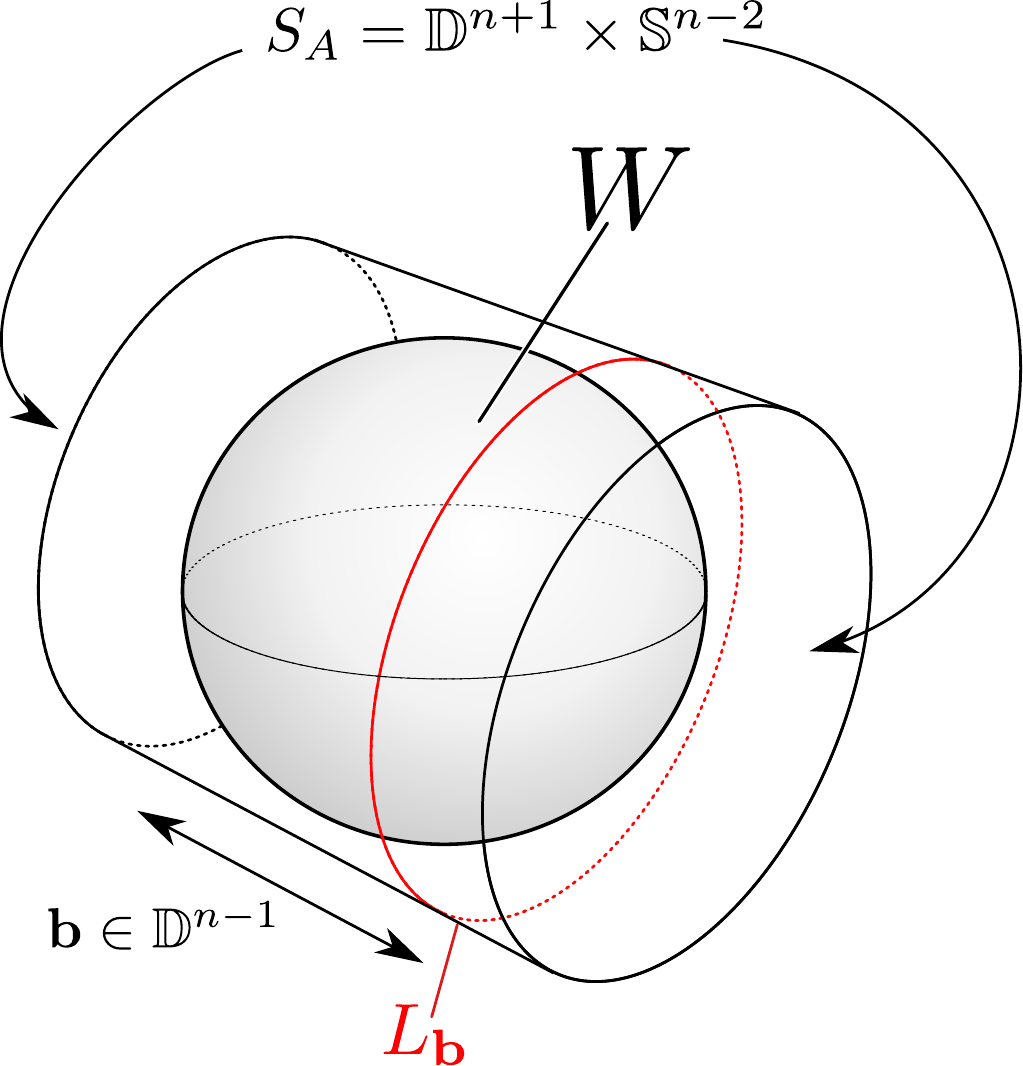}
  \caption{We find a family of \LOB{}s by deforming the sphere to the
    boundary of a bi-disk.  One of the two parts of the boundary,
    which we denote by $S_B$, will then be foliated by
    \LOB{}s.}\label{fig: convex deformation sphere}
\end{figure}

Let now $(W,\omega)$ be a symplectically aspherical filling of
$\bigl(\SS^{2n-1}, \xi_0\bigr)$.
If it is only a weak filling, we can extend it by attaching a
symplectic collar to obtain a strong symplectic filling of the sphere
\cite[Remark~2.11]{WeafFillabilityHigherDimension} because
$\restricted{\omega}{T \SS^{2n-1}}$ is exact.
This filling is diffeomorphic to the initial one, and it is also still
symplectically aspherical, because any $2$-sphere can just be pushed
by a homotopy entirely into the old symplectic filling.
After rescaling the symplectic form, the extended symplectic manifold
will be a strong symplectic filling of the unit sphere.
Remove now the interior~$\mathring \DD^{2n}$ of the unit ball from
$\widehat \DD^{2n}$, and glue $\widehat \DD^{2n} \setminus \mathring
\DD^{2n}$ symplectically onto the filling~$W$.
Denote this new symplectic manifold by $(\widehat W, \widehat
\omega)$.
Clearly $\widehat W$ is homeomorphic to $W$.
Using holomorphic disks, we will show as in the original paper by
McDuff that $\widehat W$ is contractible, so that the $h$-cobordism
theorem \cite{SmaleHCobordism, MilnorHCobordism} implies that $W$ must
be diffeomorphic to $\DD^{2n}$ whenever $2n-1 \ge 5$.
To study $\widehat W$ using holomorphic curves, choose first an almost
complex structure~$J$ on $\widehat W$ that is tamed by $\widehat
\omega$ and that agrees on a small neighborhood of $\p \widehat
\DD^{2n}$ in $\widehat W$ with the standard complex structure~$i$ on
$\CC^n$.
The holomorphic curves we are interested in are attached to a family
of \LOB{}s, which we will introduce now.
Let $\Psi\colon \SS^n \times \DD^{n-1} \to S_B\subset \p\widehat W$ be
the embedding into the boundary of $\widehat W$ given by
\begin{equation*}
  \bigl((a_1,a_2,\dotsc,a_{n+1});(b_1,\dotsc,b_{n-1})\bigr) \mapsto
  \bigl(a_1+ i b_1,\dotsc,a_{n-1} + i b_{n-1},a_n + i a_{n+1}\bigr) \;.
\end{equation*}
The image of $\Psi$ lies in $S_B \subset \p \widehat W$, and the
$J$-complex tangencies on the corresponding part of $\p \widehat W$
are the kernel of the $1$-form
\begin{equation*}
  - d^c g_B = 2x_1\, dy_1 + \dotsm + 2x_{n-1}\, dy_{n-1} +
  2\,\bigl(x_n\, dy_n - y_n\,dx_n\bigr) \;.
\end{equation*}
We obtain for the pull-back
\begin{equation*}
  \Psi^*\bigl(- d^c g_B\bigr) =
  2a_1\, db_1 + \dotsm + 2a_{n-1}\, db_{n-1} +
  2\,\bigl(a_n\, da_{n+1} - a_{n+1}\,da_n\bigr)
\end{equation*}
so that the restriction of $\Psi^*\bigl(- d^c g_B\bigr)$ to each of
the spheres $\SS^n\times\bigl\{(b_1,\dotsc, b_{n-1}) = \mathrm{const}
\bigr\}$ gives
\begin{equation*}
  2\,\bigl(a_n\, da_{n+1} - a_{n+1}\,da_n\bigr) \;.
\end{equation*}
This means that the projection
\begin{equation*}
  \bigl((a_1,a_2,\dotsc,a_{n+1});(b_1,\dotsc,b_{n-1})\bigr) \mapsto
  \arg (a_n + i a_{n+1}) \in \SS^1
\end{equation*}
defines for each $(b_1,\dotsc,b_{n-1})$ a \LOB with the $(n-1)$-ball
as pages and trivial monodromy.
From now on we denote the points in $\DD^{n-1}$ by $\bbf = (b_1,
\dotsc, b_{n-1})$, and write for the \LOB
\begin{equation*}
  L_\bbf = \Psi\bigl(\SS^n \times \{\bbf\} \bigr),
\end{equation*}
and $B_\bbf$ for its binding.
For the technical details of the following part, we refer to
Section~\ref{sec: topology moduli space}.
We will study the space
\begin{equation*}
  \widetilde{\mM}_\star = \Bigl\{(\bbf, u, z_0)\Bigm|\, \bbf \in \DD^{n-1},\,
  u\colon (\DD^2, \p \DD^2) \to \bigl(\widehat W, L_\bbf\bigr),\,
  z_0 \in \DD^2 \Bigr\}
\end{equation*}
of nonconstant holomorphic maps from a disk, equipped with one marked
point $z_0$, and with boundary sent into one of the \LOB{}s~$L_\bbf$.
Additionally, we require that $u$ is homotopic to a Bishop disk as an
element in $\pi_2\bigl(\widehat W, L_\bbf \setminus B_\bbf\bigr)$, and
we denote the corresponding subset by $\widetilde \mM$.
Next we divide $\widetilde \mM$ by the action of the group
$\Aut(\DD^2)$ of biholomorphic transformations on $\DD^2$, where
$\varphi \in \Aut(\DD^2)$ acts on $\widetilde \mM$ via
\begin{equation*}
  \varphi \cdot (\bbf,u,z_0) =
  \bigl(\bbf, u\circ \varphi^{-1}, \varphi(z_0)\bigr) \;.
\end{equation*}
We denote the moduli space $\widetilde \mM / \Aut(\DD^2)$ by $\mM$.
Note that for every class~$[\bbf, u, z]$ in $\mM$, we can fix a unique
representative $(\bbf, u_0, z_0)$ by choosing a parametrization of $u$
such that
\begin{equation}
  u(z) \in
  \begin{cases}
    \text{the $0$ degree page of the \LOB, } & \text{if $z = 1$,} \\
    \text{the $\pi/2$ degree page of the \LOB, } & \text{if $z = i$,} \\
    \text{the $\pi$ degree page of the \LOB, } & \text{if $z = -1$}. \\
  \end{cases}\label{eq: standard parametrization of hol disks}
\end{equation}
A corollary of this is that the moduli space~$\mM$ (before the
compactification, see below) is a \emph{trivial} disk bundle over the
space of unmarked disks.
This is the key fact that will allow us to ``push'' the topology of
$W$ into its boundary (which is the geometric analogue of the
algebraic argument given in \cite{McDuff_contactType} and
\cite{OanceaViterbo}).
Next, we need to understand the compactification of $\mM$.
Note first that typical holomorphic disks are surrounded by a
neighborhood of other typical holomorphic disks, that is, they
represent smooth points of the interior of the moduli space~$\mM$.
With ``typical'', we mean smooth holomorphic disks whose interior
points are mapped to the interior of $\widehat W$, and whose boundary
sits on a \LOB~$L_\bbf$ that is not a boundary \LOB, i.e.~for which
$\norm{\bbf} < 1$, and such that the disk does not touch the
binding~$B_\bbf$ of the \LOB.
Let us now consider the remaining cases.  The boundary of $\widehat W$
consists of $S_A \subset\bigl\{g_A = 1\bigr\}$ and $S_B
\subset\bigl\{g_B = 1\bigr\}$, which are weakly and strongly
plurisubharmonic hypersurfaces respectively.
A disk touching $S_B$ with one of its interior points will
automatically be constant.
If the disk touches $S_A$ instead, then it needs to be entirely
contained in this hypersurface, and in particular its boundary will
lie on a \LOB with $\norm{\bbf} = 1$; below we will explain how to
understand the disks in this second case explicitly.
For every \LOB~$L_\bbf$, there is a certain neighborhood of its
binding~$B_\bbf$ that is only intersected by Bishop disks.
Since there is exactly one disk meeting every point of this
neighborhood, that is, the \defin{evaluation map}
\begin{equation*}
  \ev\colon \mM \to \widehat W, \, [\bbf, u,z_0] \mapsto u(z_0)
\end{equation*}
restricts close to $B_\bbf$ to a diffeomorphism, it follows that the
compactification $\overline{\mM}$ contains disks that collapse to a
point in the binding.
In \cite{NiederkrugerRechtman} it was shown that adding these constant
disks to $\mM$, corresponds to adding points which lie on the smooth
boundary of the compactification~$\overline{\mM}$.
Before understanding the bubbling, we will discuss disks whose
boundary lies in a \LOB~$L_\bbf \subset S_A$.
\begin{lemma}\label{lemma: disks with boundary in S_A}
  Suppose $u \in \overline{\mM}$ maps $\p \DD^2$ to a \LOB{}~$L_\bbf$
  such that $\norm{\bbf}=1$.
  Then the image of $u$ is completely contained in $S_A$, and
  moreover, it is obtained by the intersection of a complex line
  parallel to the $z_n$-plane with $S_A$.
\end{lemma}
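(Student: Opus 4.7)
The plan is to use the boundary condition on $L_\bbf$ with $\|\bbf\|=1$ to force the first $n-1$ complex coordinates of $u$ to be constant on the whole disk, whereupon the image is identified as the intersection of a complex line with $S_A$.

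Under $\Psi$, any point of $L_\bbf$ has imaginary parts $y_j = b_j$ fixed for $j = 1, \ldots, n-1$, so $y_j \circ u \equiv b_j$ along $\p \DD^2$. On the neighborhood $V$ of $\p\widehat\DD^{2n}$ in $\widehat W$ where $J$ agrees with the standard complex structure $i$ on $\CC^n$, the standard projection $\pi\colon \CC^n \to \CC^{n-1}$, $\z \mapsto (z_1, \ldots, z_{n-1})$, is holomorphic; thus $\pi \circ u$ is $i$-holomorphic on $u^{-1}(V)$, and each $y_j \circ u$ is harmonic there. The hardest step, and the main obstacle I anticipate, is to show that the image of $u$ is entirely contained in $V$. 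For this I would combine the strong maximum principle for the strictly plurisubharmonic function $g_B$ (which uses $g_B \circ u = 1$ on $\p \DD^2$ to force $g_B \circ u < 1$ on the interior of $\DD^2$ and so keeps $u$ off $S_B$ away from its boundary) with the fact that $u \in \overline{\mM}$ is a limit of Bishop-type disks for nearby \LOB{}s $L_{\bbf'}$ with $\|\bbf'\| < 1$; those nearby disks sit explicitly in complex lines parallel to the $z_n$-plane and hence inside $V$, which should prevent the limit $u$ from travelling deep into the filling and leaving the region where $J$ is standard.

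Once $u \subset V$, each $y_j \circ u$ is harmonic on $\DD^2$ with constant boundary value $b_j$, so by Dirichlet uniqueness $y_j \circ u \equiv b_j$; consequently each $\pi_j \circ u$ is holomorphic with constant imaginary part and must be a constant $c_j + i b_j$ with $c_j \in \RR$. The image of $u$ then lies in the complex line $\ell = \{(c_1 + i b_1, \ldots, c_{n-1} + i b_{n-1}, z_n) : z_n \in \CC\}$, along which $g_A \equiv \sum_j b_j^2 = 1$, placing the image inside $S_A = \{g_A = 1\} \cap \p\widehat\DD^{2n}$. Intersecting $\ell$ with $S_A$ gives the round disk $|z_n|^2 \le 1 - \sum_j c_j^2$, which must therefore coincide with the image of $u$, establishing both assertions of the lemma.
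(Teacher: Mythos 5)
The main obstacle you flag yourself---showing that $u(\DD^2)$ stays in the neighborhood $V$ where $J = i$---is indeed the crux, and your proposed fix does not work. Disks $u' \in \mM$ with boundary on an interior \LOB{}~$L_{\bbf'}$, $\norm{\bbf'} < 1$, are \emph{not} explicit Bishop disks sitting in complex lines; they are arbitrary $J$-holomorphic disks in the filling and can penetrate arbitrarily deep into $W$, far from the collar where $J$ is standard. (The explicit Bishop disks form only a small family near the binding.) So ``$u$ is a limit of nearby Bishop disks'' is false, and there is no a priori $C^0$-bound keeping $u$ in $V$. Your strong-maximum-principle remark about $g_B$ only keeps interior points of $u$ off the boundary stratum $S_B$; it says nothing about confinement to $V$. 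As stated, your argument has a genuine gap at exactly the step you predicted would be hard.

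The paper sidesteps this by never trying to establish $u(\DD^2) \subset V$ in advance. After rotating coordinates so that $\bbf = (1,0,\dotsc,0)$, one works only on a collar $G \subset \DD^2$ around $\p\DD^2$, where $u$ is guaranteed to land in the model region by continuity of the boundary condition, and where $y_1 \circ u$ is harmonic with boundary value $\equiv 1$ and interior values $\le 1$. The key extra ingredient is the Cauchy--Riemann trick: $x_1 \circ u$ restricted to $\p\DD^2$ is a bounded periodic function, so its tangential derivative vanishes at some $e^{i\phi_0}$; by $i\,\partial_r = \partial_\phi$ this forces the outward normal derivative of $y_1 \circ u$ to vanish at $e^{i\phi_0}$. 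The boundary point lemma then pins $y_1 \circ u \equiv 1$ on the collar, so $u$ touches $S_A$ at interior points of $\DD^2$; by the companion result that disks touching $\p \widehat W$ at interior points are (multiple covers of) Bishop disks, $u$ is already contained in $S_A$. Only then does one invoke harmonicity of the remaining $y_j$ and the Cauchy--Riemann equations to conclude that all $z_j$, $j \le n-1$, are constant. In other words, the containment $u(\DD^2) \subset S_A \subset V$ comes out as a \emph{consequence} of the Hopf lemma plus Cauchy--Riemann at the boundary, not as a prerequisite. To repair your proof you would need to replace your approximation argument with this collar-and-Hopf-lemma step.
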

\begin{proof}
  Parametrize the disk by polar coordinates~$r e^{i\phi}$.
  By acting on the coordinates $z_1, \dotsc, z_{n-1}$ with a matrix in
  $SO(n-1)$ (regarded as an element of $\SU(n-1)$ with real entries),
  we can assume without loss of generality that the \LOB~$L_\bbf$
  corresponds to the parameter $\bbf = (1,0,\dotsc,0)$, as the
  functions~$g_A$ and $g_B$ are invariant under such an action.
  In particular it follows that the $y_1$-coordinate of $u$ has its
  maximum on the boundary of $u$.
  The $x_1$-coordinate of $\restricted{u}{\p\DD^2}$ is bounded, and
  hence there is an angle $e^{i\phi_0}$ at which the derivative
  \begin{equation*}
    \restricted{\frac{d}{d\phi}}{\phi = \phi_0}
    x_1\Bigl(u\bigl(e^{i\phi}\bigr)\Bigr) = 0
  \end{equation*}
  is zero.
  Complex multiplication gives $i\cdot\partial_r = \partial_\phi$,
  hence
  \begin{equation*}
    dy_1\bigl(Du\cdot\partial_r\bigr)
    = dy_1\bigl(Du\cdot(-i\cdot\partial_\phi)\bigr)
    = - dy_1\bigl(i\cdot Du\cdot\partial_\phi\bigr)
    = - dx_1 \bigl(Du\cdot\partial_\phi\bigr) = 0\;.
  \end{equation*}
  It follows that the outward derivative of the $y_1$-coordinate
  vanishes at the point $e^{i\phi_0} \in \DD^2$, so that according to
  the boundary point lemma, $y_1$ must equal the constant~$1$ on the
  whole disk, and as a consequence $u$ lies entirely in $S_A$.
  The $y_2$- to $y_{n-1}$-coordinates are all $0$ on the boundary of
  the disk, and hence by the maximum principle, they need to be both
  maximal and minimal on all of the disk.
  With the Cauchy-Riemann equation we obtain that the $x_1$- up to
  $x_{n-1}$-coordinates of $u$ need all to be constant on $u$ (for
  more details read Section~\ref{sec: bishop disks at the boundary}).
\end{proof}
As explained in Section~\ref{sec: topology moduli space}, no bubbling
can occur under our assumptions, and hence $\overline{\mM}$ will be a
compact manifold with boundary and corners (the boundary is smooth
everywhere with the exception of the disks corresponding to the edges
of $\widehat W$).
Moreover, the moduli space is orientable (see Appendix~\ref{sec:
  orientability of the moduli space}) and the evaluation map
\begin{equation*}
  \ev\colon \bigl(\overline{\mM}, \p \overline{\mM} \bigr) \to
  \bigl(\widehat W, \p \widehat W\bigr), \,
  [\bbf, u,z_0] \mapsto u(z_0)
\end{equation*}
is a degree~$1$ map, that is, it maps the fundamental class
$[\overline{\mM}] \in H_{2n}\bigl(\overline{\mM}, \p \overline{\mM};
\ZZ\bigr)$ onto the fundamental class $[\widehat W] \in
H_{2n}\bigl(\widehat W, \p \widehat W; \ZZ\bigr)$.
We are therefore in a position to apply the following topological
result, which was stated as Lemma~\ref{lemma: map of product manifold
  induces contractibility} in the introduction.

\begin{lemma}\label{lemma:KlausTopology}
  Let $X,Y$ be smooth orientable compact manifolds with boundary and
  corners such that $\p Y$ is homeomorphic to a sphere and $\dim X + 2
  = \dim Y \ge 3$.
  Write $X' = X\times \DD^2$, and assume that
  \begin{equation*}
    f\colon (X',\p X')\to (Y,\p Y)
  \end{equation*}
  is a continuous map that is smooth on the interior of $X'$, and for
  which we find an open subset~$U\subset \mathring Y$ such that
  $\restricted{f}{f^{-1}(U)} \colon f^{-1}(U) \to U$ is a
  diffeomorphism.
  Then $Y$ is contractible.
\end{lemma}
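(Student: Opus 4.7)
My plan is to show that $Y$ is simply connected with trivial reduced integral homology and then invoke the Hurewicz and Whitehead theorems. The two inputs will be that the diffeomorphism on $U$ forces $\deg(f) = \pm 1$, and that the product factor $\DD^2$ in $X' = X \times \DD^2$ lets one homotope (respectively homologously push) every positive-dimensional class of $X'$ into the part $X \times \SS^1 \subset \p X'$.

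First I would record that $f$ has topological degree $\pm 1$: any $p \in U$ has a single preimage $q \in f^{-1}(U) \subset \mathring{X'}$ at which $f$ is a local diffeomorphism, so the local degree there is $\pm 1$ and equals the global degree. The standard consequences follow: a cap-product computation using Poincaré--Lefschetz duality gives surjections $f_* \colon H_k(X';\ZZ) \twoheadrightarrow H_k(Y;\ZZ)$ for every $k$, and the usual covering-space argument (any proper subgroup of $\pi_1(Y)$ containing $f_\#(\pi_1(X'))$ would yield a nontrivial connected cover of $Y$ along which $f$ lifts, forcing the degree to be divisible by the number of sheets) gives $f_\# \colon \pi_1(X') \twoheadrightarrow \pi_1(Y)$.

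The heart of the argument is a ``solid-torus trick'' exploiting the product structure. Given any continuous loop $\alpha \colon \SS^1 \to X$ or, more generally, any $k$-cycle $\alpha$ in $X$, the map $\alpha \times \id \colon \alpha \times \DD^2 \to X'$ exhibits $\alpha \times \{0\}$ as homotopic (respectively homologous) to $\alpha \times \{1\}$ inside $X'$; since $\alpha \times \{1\}$ lies in $X \times \SS^1 \subset \p X'$, it is sent by $f$ into $\p Y \cong \SS^{m-1}$, where $m = \dim Y$. For $m \ge 3$, $\p Y$ is simply connected and $H_k(\SS^{m-1}; \ZZ) = 0$ for $1 \le k \le m-2$, so the resulting class of $f(\alpha \times \{0\})$ in either $\pi_1(Y)$ or $H_k(Y)$ is trivial. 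Because the inclusion $X \hookrightarrow X'$, $x \mapsto (x,0)$, is a homotopy equivalence, this means $f_\#(\pi_1(X')) = 0$ in $\pi_1(Y)$ and $f_*(H_k(X')) = 0$ in $H_k(Y)$ for $1 \le k \le m-2$. Combined with the surjectivities from the previous step, this forces $\pi_1(Y) = 0$ and $H_k(Y;\ZZ) = 0$ in the same range. The remaining degrees $k \ge m-1$ are automatic, since $H_k(X') \cong H_k(X) = 0$ whenever $k > \dim X = m-2$.

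With $Y$ simply connected and $\tilde H_*(Y;\ZZ) = 0$, iterated Hurewicz gives $\pi_*(Y) = 0$, and Whitehead's theorem then concludes that $Y \simeq \ast$. I do not foresee a serious obstacle: the degree-$\pm 1$ computation is elementary because the regular value $p \in U$ sits safely inside $\mathring Y$ far from all corners of $\p X'$, while the solid-torus trick requires only the hypothesis $m \ge 3$.
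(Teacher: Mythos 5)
Your proof is correct, and the overall skeleton (kill the image of $f_*$ and $f_\#$ via the $\DD^2$-factor, establish surjectivity, finish with Hurewicz and Whitehead) matches the paper's. But the way you obtain the surjectivity is genuinely different and in fact streamlines the argument. The paper does not invoke the general degree-$\pm1$ machinery: for $\pi_1$ it perturbs a loop $\gamma$ through $U$ to be transverse to $f$ and extracts a degree-one circle component of $f^{-1}(\gamma)$, and for homology it runs an induction on degree, using Hurewicz to represent classes in $H_k(Y)$ by spheres and pulling these back transversally, but only up to $k \le \tfrac{1}{2}\dim Y$ (the middle-dimensional case $k = \tfrac{1}{2}\dim Y$ needs a separate transverse-intersection argument because spheres can no longer be embedded generically); the remaining degrees are then handled by Poincaré--Lefschetz duality plus the universal coefficient theorem, which requires a slightly delicate interplay between ``torsion-free'' and ``all torsion'' in the two halves. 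Your route replaces all of this with the standard statement that a degree-$\pm1$ map of compact connected oriented manifolds with boundary is surjective on $\pi_1$ and on every $H_k$, obtained via the cap-product naturality square and the covering-space argument respectively; once you also kill the image via the solid-torus trick, you get $\tilde H_*(Y)=0$ and $\pi_1(Y)=0$ in one stroke, with no induction and no separate treatment of the upper half of the homology. This buys a cleaner proof at the cost of quoting the two surjectivity facts (themselves proved with Lefschetz duality and covering-space theory, so there is no real saving in the toolbox, only in the exposition). Two small points worth tightening: your parenthetical justification of $\pi_1$-surjectivity only addresses finite-index subgroups; for a subgroup of infinite index one must instead observe that the lifted map lands in a noncompact cover $\widetilde Y$, whose $H_m(\widetilde Y,\p\widetilde Y;\ZZ)$ vanishes, contradicting that the fundamental class pushes forward nontrivially. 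And the solid-torus trick changes basepoints, so for $\pi_1$ one should note that the free homotopy into $\p Y\cong\SS^{m-1}$ (simply connected since $m\ge3$) kills the based class after conjugating by the track of the basepoint; this is easy but deserves a sentence.
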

\begin{proof}
  Note that by Whitehead's theorem, it suffices to show that $Y$ is
  weakly contractible, that is, $\pi_j(Y) = 0$ for all $j > 0$.
  Using Hurewicz's theorem we will show instead that $Y$ is simply
  connected and satisfies $H_j(Y ; \ZZ) = 0$ for all $j > 0$.

  \textbf{(i)} We will first consider the fundamental group of $Y$.
  Choose the base point~$p_0 \in U\subset \mathring Y$.
  Let $\gamma$ be a smooth, embedded loop representing a class in
  $\pi_1\bigl(Y, p_0\bigr)$ that lies in the interior of $Y$.
  After a perturbation, we can assume that $\gamma$ is transverse to
  the map~$f$, so $f^{-1}\bigl(\gamma\bigr)$ will be a finite
  collection of loops~$\Gamma_0,\dotsc,\Gamma_N$ in $X'$.
  There is one loop, say $\Gamma_0$, that is mapped to $\gamma$ with
  degree one.
  The reason for this is that $\gamma$ runs through $U$, where $f$ is
  a diffeomorphism.
  Then the loop $f\circ \Gamma_0$ is homotopic to $\gamma$, and thus
  represents the same class in $\pi_1\bigl(Y, p_0\bigr)$.
  Using the fact that $X'$ is diffeomorphic to a trivial disk bundle,
  we may shift $\Gamma_0$ into the boundary of $X'$ (just by moving it
  inside the $\DD^2$-factor).
  In particular, this shows that $[\gamma] = [f\circ \Gamma_0]$ can be
  represented by a loop that lives in the boundary of $Y$, and is thus
  contractible.
  \textbf{(ii)} Next we need to compute the homology of $Y$.
  It is easy to see that the image of
  \begin{equation*}
    f_*\colon H_*\bigl(X'; \ZZ\bigr) \to H_*\bigl(Y; \ZZ\bigr)
  \end{equation*}
  is trivial.
  Indeed, all homology groups $H_k\bigl(X'; \ZZ\bigr)$ with $k \ge
  \dim X$ are trivial, so that we only need to study $k < \dim X <
  \dim \p Y$.
  Let $A\in H_k(Y; \ZZ)$ be a homology class that lies in the image of
  $f_*$ so that there is a $B \in H_k(X'; \ZZ)$ with $A = f_*B$.
  Since $X' = X \times \DD^2$ where $\DD^2$ is contractible, $B$ can
  be represented by a cycle in $X \times \{p\}$ for any point $p \in
  \DD^2$; in particular we are free to choose $p \in \p\DD^2$, hence
  $B$ is represented by a cycle in~$\p X'$.
  This implies that the class~$A$ is homologous to a cycle in the
  sphere~$\p Y$, which shows that $A$ must be trivial.
  We will now show that $f_*\colon H_k\bigl(X'; \ZZ\bigr) \to
  H_k\bigl(Y; \ZZ\bigr)$ is surjective for every $k \le
  \frac{1}{2}\,\dim Y$.
  Assume for now that $k< \frac{1}{2}\,\dim Y$ and that we already
  have shown for every $l<k$ that $H_l(Y; \ZZ) = 0$.
  From (i), we see that $k \ge 2$.
  By Hurewicz's theorem we know that $H_k\bigl(Y; \ZZ\bigr)$ is
  isomorphic to $\pi_k\bigl(Y, p_0\bigr)$ so that we can represent any
  homology class in $H_k\bigl(Y; \ZZ\bigr)$ by a map
  \begin{equation*}
    s\colon \SS^k \to Y \;.
  \end{equation*}
  We additionally assume again that the base point~$p_0$ lies in $U$.
  After a generic perturbation (see
  \cite[Theorem~II.2.12]{HirschDiffTopology}), $s$ will be an
  embedding that is transverse to $f$, and we find a closed (possibly
  disconnected) smooth submanifold $f^{-1}(s) \subset X'$.
  As in step~(i), there is a unique connected component~$S$ of $f^{-1}
  (s)$ which passes through the base point~$p_0$.
  Clearly we have a map $S \to \SS^k$ of degree one that makes the
  following diagram commute:
  \begin{equation*}
    \xymatrix{
      S \ar[rr] \ar[dr]_{f} & & \SS^k \ar[dl]^{s} \\
      & Y & 
    }
  \end{equation*}
  As we wanted to show, it follows that $f(S)$ represents the same
  class in $H_k\bigl(Y; \ZZ\bigr)$ as $s$, so that $f_*\colon
  H_k\bigl(X'; \ZZ\bigr) \to H_k\bigl(Y; \ZZ\bigr)$ is surjective, and
  as a consequence $H_k\bigl(Y; \ZZ\bigr)$ is trivial.
  Let us now study the case $k = \frac{1}{2}\,\dim Y$.
  Again we may represent any element in $H_k\bigl(Y; \ZZ\bigr)$ by a
  map
  \begin{equation*}
    s\colon \SS^k \to Y
  \end{equation*}
  that goes through $p_0 \in U$.
  Perturbing $s$, we may assume that it is an immersion with
  transverse self-intersections, and also that it is transverse to $f$
  (position the double-points at regular values of $f$).
  The map $F := (f,s)\colon X'\times \SS^k \to Y \times Y$ is
  transverse to the diagonal $\triangle_Y \subset Y\times Y$, and it
  follows that $F^{-1}(\triangle_Y)$ is a closed smooth submanifold of
  $X'\times \SS^k$.
  Let $S$ be the unique component of $F^{-1}(\triangle_Y)$ that is
  mapped by $F$ to $(p_0,p_0)$.
  By definition we have $f\circ \restricted{\Pi_1}{S} = s\circ
  \restricted{\Pi_2}{S}$, where $\Pi_1\colon X'\times \SS^k \to X'$
  and $\Pi_2\colon X'\times \SS^k \to \SS^k$ are the canonical
  projections.
  Furthermore $\Pi_2$ is a degree one map.
  This ends the proof, because $[f\circ \restricted{\Pi_1}{S}] = [s]
  \in H_k(Y; \ZZ)$.
  \textbf{(iii)} Above we have seen that $H_k\bigl(Y; \ZZ\bigr)$ is
  trivial for all $1\le k \le \frac{1}{2}\, \dim Y$.
  Using Poincaré-Lefschetz duality, $H^{\dim Y - k}\bigl(Y, \p Y;
  \ZZ\bigr)$ is isomorphic to $H_k\bigl(Y; \ZZ\bigr)$ and thus it also
  vanishes (for $1 \le k \le \frac 12 \dim Y$).
  The long exact sequence of the pair then implies that $H^{\dim Y -
    k}\bigl(Y; \ZZ\bigr) = \{0\}$, and finally the universal
  coefficient theorem tells us that $H_{\dim Y - k}\bigl(Y; \ZZ\bigr)
  = \{0\}$, so that $Y$ has trivial homology.
\end{proof}

To conclude the proof of Theorem~\ref{theorem: ElFlMD revisited},
simply apply the lemma and Corollary~\ref{coro: h-cobordism} to the
moduli space and its evaluation map found above.
It follows that $\widehat W$ is diffeomorphic to a ball.

\section{Weinstein handles and contact surgeries}\label{sec:
  description handle and surgery}

In this section, we will give a precise description of Weinstein
handle attachment, and then show how to deform the contact structure
near the belt sphere in order to find a suitable family of \LOB{}s.
Assume $(W,\omega)$ is a weak symplectic filling of a
$(2n-1)$-dimensional contact manifold $(M,\xi)$, choose an isotropic
sphere $\attachingSphere{k-1} \hookrightarrow (M,\xi)$ with trivial
normal bundle~$\nu \attachingSphere{k-1}$, and suppose that
$\restricted{\omega}{T \attachingSphere{k-1}}$ is exact (this is
obviously always the case if $k \ne 3$).
After deforming the symplectic structure in a small neighborhood of
the boundary using \cite[Remark~2.11]{WeafFillabilityHigherDimension},
we can find a Liouville vector field~$X_L$ on a neighborhood $U
\subset W$ close to $\attachingSphere{k-1}$ that points transversely
out of $W$, and such that
\begin{equation*}
  \restricted{\alpha}{U \cap  M} =
  \restricted{(\iota_{X_L} \omega)}{T(U \cap  M)}
\end{equation*}
is a contact form for $\xi$.
Topologically, the handle attachment can be described as follows.
Choose a trivialization of the normal bundle $\nu
\attachingSphere{k-1}$ in $M$, identifying a tubular neighborhood with
\begin{equation*}
  T^*\SS^{k-1} \times \RR^{2(n-k) +1} \cong \SS^{k-1} \times \RR^{2n - k} \;,
\end{equation*}
and let the $k$-handle~$H_k$ be the poly-disk
\begin{equation*}
  H_k := \DD^k \times \DD^{2n-k} \;.
\end{equation*}
The boundary of $H_k$ can be written as the union
\begin{equation*}
  \p H_k := \SS^{k-1} \times \DD^{2n-k} \, \cup  \,
  \DD^k \times \SS^{2n-k-1} \;.
\end{equation*}
Using the obvious homeomorphism, we can ``glue'' $H_k$ along the
subset $\SS^{k-1} \times \DD^{2n-k} \subset \p H_k$ to the
neighborhood of $\attachingSphere{k-1}$ using the trivialization
chosen above.
Denote the new manifold by
\begin{equation*}
  W' = W \cup_{\nu \attachingSphere{k-1}} H_k \;.
\end{equation*}
Note that the gluing operation depends on the trivialization chosen
for the normal bundle of $\attachingSphere{k-1}$.
The boundary $M' = \p W'$ is obtained from the old contact
manifold~$M$ by removing the neighborhood of $\attachingSphere{k-1}$,
and gluing in the free boundary component of the handle, that is,
\begin{equation*}
  M' = \bigl(M \setminus \nu\attachingSphere{k-1}\bigr)
  \cup  (\DD^k \times \SS^{2n-k-1}) \;.
\end{equation*}
This operation changing $M$ to $M'$ is called a \defin{contact surgery
  of index~$k$ along $\attachingSphere{k-1}$}.
We will recall below how the natural symplectic structure on $W'$ with
weakly contact-type boundary $M'$ is defined.
The \defin{belt sphere}~$\beltSphere{2n-k-1}$ of the handle~$H_k$ is
the ``cosphere to the gluing sphere'',
\begin{equation*}
  \beltSphere{2n-k-1} := \{\0\} \times \SS^{2n-k-1} \subset \p H_k \;.
\end{equation*}
Note that contact surgery can also be defined as an operation on
contact manifolds without assuming that they are symplectically
fillable:
one only need regard $(M,\xi)$ as the contact boundary of a piece of
its symplectization $\bigl( (-\epsilon,0] \times M, d(e^t \alpha)
\bigr)$.
Topologically, a surgery of index~$k$ is the operation of removing a
small neighborhood of the isotropic sphere~$\attachingSphere{k-1}$
from the contact manifold~$(M,\xi)$ and gluing into the cavity a
standard patch that is diffeomorphic to $\DD^k \times \SS^{2n-k-1}$.

\subsection{Attaching Weinstein handles}
\label{sec: description of handle attachments}

We now define a symplectic model for a subcritical Weinstein handle of
index $k$ in dimension~$2n$.
For this, split $\CC^n$ into $\CC^k \times \CC^m \times \CC$ with $n =
k + m + 1$, and write the coordinates on $\CC^k$ as
\begin{align*}
  \z^- = \x^- + i \, \y^- &:= \bigl(x^-_1 + iy^-_1,\dotsc,x^-_k +
  iy^-_k\bigr) \;,
  \intertext{the ones on $\CC^m$ as}
  \z^+ = \x^+ + i\, \y^+ &:= \bigl(x^+_1 + i y^+_1,\dotsc,x^+_m + i
  y^+_m) \;,
  \intertext{and the ones on $\CC$ as}
  z^\circ = x^\circ + i\, y^\circ \;.
\end{align*}
The coordinate~$z^\circ$ behaves like any other of the coordinates in
$\z^+$, and in the usual descriptions of the handle attachment, it is
not distinguished from $\z^+$.  (For a critical handle 
attachment $k=n$, thus there are no $\z^+$- or $z^\circ$-coordinates.)
The reason why we have introduced this more complicated notation is to
prepare for the deformation we will perform in the next section,
in which the $z^\circ$-coordinate will play a particular role.
As a model for the handle, take
\begin{equation*}
  H_r := \DD^k \times \DD^{2n-k}_r \subset \CC^n \;,
\end{equation*}
where the first disk corresponds to the $\y^-$-coordinates, the second
disk to the $(\x^-, \z^+, z^\circ)$-coordinates, and $r > 0$ is a
constant, i.e.~$H_r$ is the intersection of the two subsets
\begin{equation*}
  \bigl\{\norm{\y^-}^2 \le 1\bigr\} \,\cap \,
  \bigl\{\norm{\x^-}^2 + \norm{\z^+}^2 + \abs{z^\circ}^2 \le r^2\bigr\}\;.
\end{equation*}
We denote
\begin{equation*}
  \p_- H_r = \p \DD^k \times \DD^{2n-k}_r, \quad
  \p_+ H_r =  \DD^k \times \p \DD^{2n-k}_r \;.
\end{equation*}
The core of $\p_-H_r$ is the $(k-1)$-sphere
\begin{align*}
  S_- &:= \bigl\{\norm{\y^-}^2 = 1,\, \x^- = \0, \, \z^+ = \0,\, z^\circ =
  0\bigr\}
  \intertext{which will be identified with the attachment sphere in a
    contact manifold; the core of $\p_+H_r$ is the $(n+m)$-sphere
    (note that $n+m = 2n -k -1$)}
  S_+ &:= \bigl\{\y^- = \0,\, \norm{\x^-}^2 + \norm{\z^+}^2 +
  \abs{z^\circ}^2 = r^2\bigr\} \;.
\end{align*}
Choose on $\CC^n$ the symplectic form
\begin{equation*}
  \omega = 2\, \sum_{r=1}^k dx^-_r \wedge  dy^-_r
  +   4\, \sum_{s=1}^m dx^+_s \wedge  dy^+_s
  +  4\, dx^\circ \wedge  dy^\circ \;.
\end{equation*}
It admits the Liouville form
\begin{equation*}
  \lambda = 2\, \sum_{r=1}^k \bigl(2x^-_r\, dy^-_r + y^-_r \, dx^-_r\bigr)
  + 2\, \sum_{s=1}^m \bigl(x^+_s \, dy^+_s  - y^+_s \, dx^+_s\bigr)
  + 2\, (x^\circ \, dy^\circ - y^\circ \, dx^\circ)
\end{equation*}
that is associated to the Liouville vector field
\begin{equation*}
  \begin{split}
    X_L &= \sum_{r=1}^k \Bigl(2x_r^-\, \frac{\partial}{\partial
      x_r^-} - y_r^-\, \frac{\partial}{\partial y_r^-} \Bigr) \\
    & \qquad + \frac{1}{2}\, \sum_{s=1}^m \Bigl(x_s^+\,
    \frac{\partial}{\partial x_s^+} + y_s^+\, \frac{\partial}{\partial
      y_s^+}\Bigr) +
    \frac{1}{2}\,\Bigl(x^\circ\,\frac{\partial}{\partial x^\circ} +
    y^\circ\, \frac{\partial}{\partial y^\circ}\Bigr) \;.
  \end{split}
\end{equation*}
The field~$X_L$ points outward through $\p_+H_r$ and inward at
$\p_-H_r$, so that both $\p_+H_r$ and $\p_-H_r$ are contact type
hypersurfaces with the corresponding coorientations.
The core~$S_- \subset \p_-H_r$ is an isotropic sphere with trivial
conformal symplectic normal bundle.

\begin{figure}[htbp]
  \centering
  \includegraphics[height=5cm,keepaspectratio]{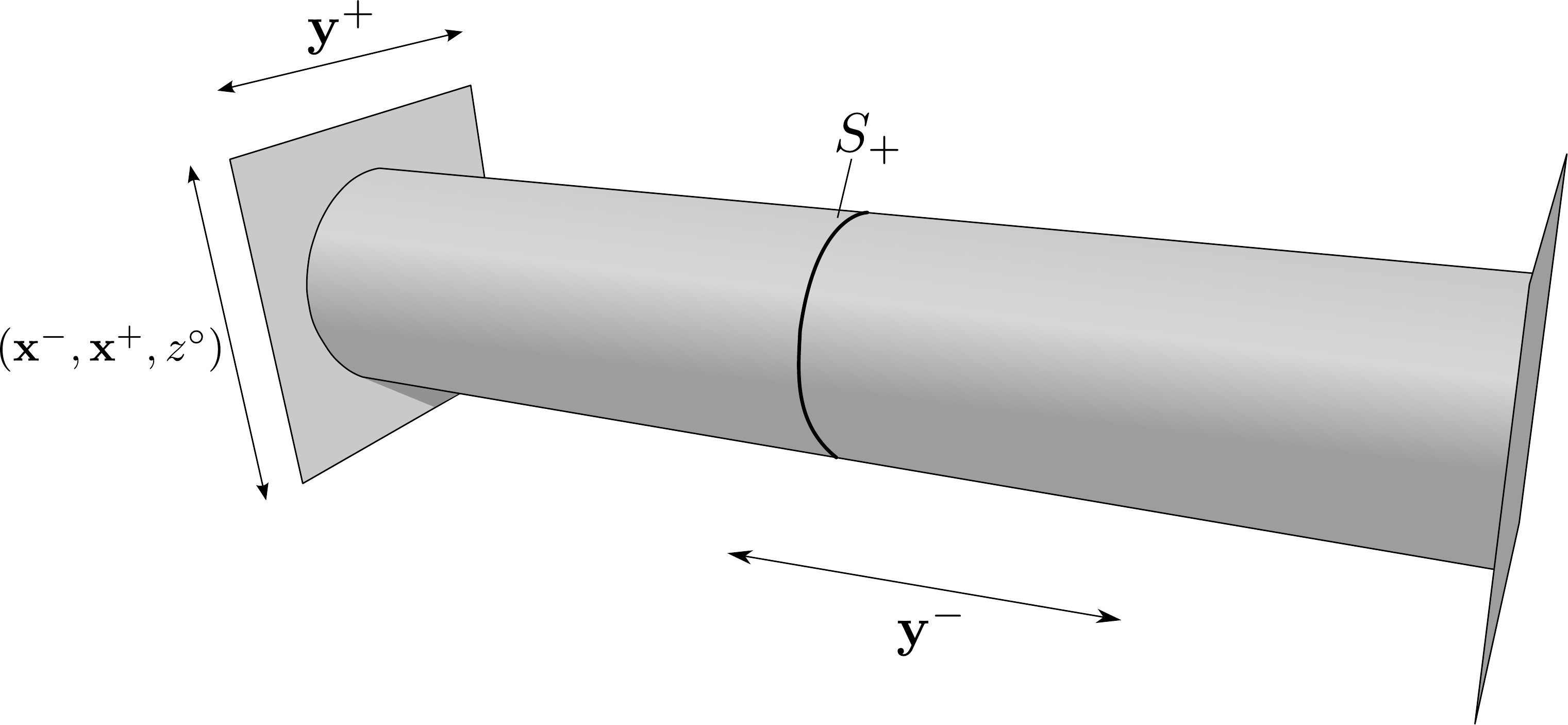}
  \caption{The handle can be glued onto a contact
    manifold.}\label{fig: unmodified handle}
\end{figure}

Let now $(M,\xi)$ be a given contact manifold and let
$\attachingSphere{k-1} \subset M$ be a $(k-1)$-dimensional isotropic
sphere with trivial conformal symplectic normal bundle that will serve
as the \defin{attaching sphere} of the $k$-handle~$H_r$.
Fixing $r > 0$ small enough, $\p_- H_r$, endowed with the contact
structure induced by $\lambda$, is contactomorphic to a
neighborhood~$\nN(\attachingSphere{k-1}) \subset (M,\xi)$ of
$\attachingSphere{k-1}$.
We choose a contact form $\alpha$ for $\xi$ on $M$ such that
$\restricted{\alpha}{\nN(\attachingSphere{k-1})}$ can be glued to
$\restricted{\lambda}{T(\p_-H_r)}$ and define the Liouville manifold
\begin{equation*}
  (W_0,\lambda_0) := \bigl((-\epsilon,0]\times M, e^t\alpha\bigr)
  \cup_{\p_-H_r} (H_r,\lambda) \;.
\end{equation*}
The positive boundary of $W_0$ (denoted $\p_+ W_0$) has two smooth
faces $M \setminus \nN(\attachingSphere{k-1})$ and $\p_+H_r$, meeting
along a corner which is the image of the corner $\p \DD^k \times \p
\DD^{2n-k}_r$ in $\p H_r$, see Fig.~\ref{fig: unmodified handle}.
Fix a small neighborhood~$\uU$ of the corner, and choose a smooth
hypersurface~$M'$ that matches $\p_+ W_0$ outside of $\uU$, and is
transverse to $X_L$ in $\uU$.
Denote the induced contact structure on $M'$ by $\xi' = TM' \cap \ker
\lambda_0$.
Note that the constant~$r > 0$ can be made arbitrarily small, without
changing the isotopy class of the contact structure on $M'$; we can
shrink the size of the handle continuously (including the smoothing)
which allows us to apply Gray stability to obtain an isotopy with
support in the model neighborhood.
The belt sphere~$\beltSphere{2n-k-1} = \beltSphere{n + m}$ of the
$k$-handle is the core~$S_+$ of $\p_+ H_r$.

\subsection{Families of \LOB{}s on a deformed subcritical handle}
\label{sec: deformed subcritical handles}

To find the desired family of \LOB{}s, we will now modify the contact
structure in a neighborhood of the belt sphere in two steps.
The first deformation is borrowed from the recent article
\cite{GeigesZehmisch}; it replaces a technically more complicated
method that was used in an earlier version of this paper.
Consider again a ``thin'' handle $H_r = \DD^k \times \DD^{2n-k}_r
\subset \CC^n$ with $r \ll 1$ as used above.
Suppose that the rounding of the corners has been performed for values
of $\norm{\y^-}$ in the interval~$[1 - \epsilon, 1]$.
The part of $\p_+ H_r$ outside the smoothing region lies in the level
set~$\{f = r^2\}$ of the function
\begin{equation*}
  f(\z^-,\z^+, z^\circ) = \norm{\x^-}^2 + \norm{\z^+}^2 + \abs{z^\circ}^2 \; .
\end{equation*}
We would like to modify the Liouville field on a neighborhood of the
belt sphere so that the induced contact structure coincides with the
field of complex hyperplanes on the boundary.
For this, add the Hamiltonian vector field~$X_H$ of a function
$H\colon \CC^n \to \RR$ to $X_L$, since then $\hat X_L := X_L + X_H$
will still be a Liouville field.
Let $\rho\colon [0,\infty) \to [-1,0]$ be a smooth function that is
equal to $-1$ on the interval $[0, \sqrt{1-2\epsilon}]$, equal to $0$
on the interval $[\sqrt{1-\epsilon}, 1]$ and that increases
monotonically in between.
Define the Hamiltonian function
\begin{equation*}
  H(\z^-, \z^+, z^\circ) := 2\, \langle\x^-, \y^-\rangle\,
  \rho\bigl(\norm{\y^-}^2\bigr) \;.
\end{equation*}
The Hamiltonian vector field corresponding to $H$ is
\begin{equation*}
  X_H = - \sum_{r=1}^k y^-_r\, \rho\bigl(\norm{\y^-}^2\bigr)\, 
  \frac{\partial}{\partial y^-_r}  +
  \sum_{r=1}^k \left(2\,\langle\x^-, \y^-\rangle\, y^-_r\, 
    \rho'\bigl(\norm{\y^-}^2\bigr)
    + x^-_r\, \rho\bigl(\norm{\y^-}^2\bigr) \right)\,
  \frac{\partial}{\partial x^-_r} \;.
\end{equation*}
The vector field $\hat X_L$ agrees outside the support of $\rho$ with
$X_L$, and it is everywhere transverse to $\p_+ H_r$ as can be seen
from
\begin{equation*}
  \begin{split}
    \lie{\hat X_L} f &= \lie{X_L} f + \lie{X_H} f \\
    & = 4\, \norm{\x^-}^2 + \norm{\z^+}^2 + \abs{z^\circ}^2 +
    \lie{X_H} \norm{\x^-}^2 \\
    & = (4 + 2\, \rho)\, \norm{\x^-}^2 + \norm{\z^+}^2 +
    \abs{z^\circ}^2 + 4\,\langle\x^-, \y^-\rangle^2\, \rho' > 0 \;,
  \end{split}
\end{equation*}
because $\rho \ge -1$, and $\rho' \ge 0$.
It follows that $\lambda$ and $\hat \lambda := \iota_{\hat X_L}
\omega$ induce isotopic contact structures on $M'$.
The contact structure on the domain $\p_+ H_r \cap \{\norm{\y^-}^2 \le
1 - 2\epsilon\}$ is the kernel of the Liouville form
\begin{equation*}
  \hat \lambda = \lambda  + dH =  2\, \sum_{r=1}^k x^-_r\, dy^-_r 
  + 2\, \sum_{s=1}^m \bigl(x^+_s \, dy^+_s  - y^+_s \, dx^+_s\bigr)
  + 2\, (x^\circ \, dy^\circ - y^\circ \, dx^\circ)  \;.
\end{equation*}

\begin{remark}\label{rmk: always long cylinder}
  This first deformation shows that the surgered manifold contains a
  neighborhood of the belt sphere that is contactomorphic to a
  cylinder $\bigl\{f = r^2\} \cap \bigl\{\norm{\y^-}^2 \le 1/2\bigr\}
  \subset \CC^n$ with $r$ arbitrarily small and a contact structure
  given as kernel of $\hat \lambda$.
  Note that $\hat \lambda$ on the domain under consideration is equal
  to the differential~$- d^c f$, i.e.~the contact structure on our
  domain coincides with the complex tangencies.
  This is the key fact that we will exploit in the second deformation
  below.
\end{remark}

\vspace{1cm}

To continue, we consider the setting of Theorem~\ref{thm: main
  theorem}, in which $(M',\xi')$ was a fillable contact manifold
obtained by subcritical surgery.
Since this will be the main object of study from now on, it will be
convenient to simplify the notation, hence we assume (unlike in the
statement of Theorem~\ref{thm: main theorem}) that $(M,\xi)$ is a
closed contact manifold of dimension $2n-1$ that has been obtained by
a surgery of index $k \in \{1,\dotsc,n-1\}$ from another contact
manifold, and let $(W, \omega)$ be a weak symplectic filling of $(M,
\xi)$.
Since the theorem in dimension three already follows from the much
stronger result of Eliashberg \cite{Eliashberg_filling}, we are free
to assume $n \ge 3$.  The belt sphere then has dimension $2n-k-1 \ge n
\ge 3$, hence the restriction of $\omega$ to $\beltSphere{2n-k-1}$ is
automatically exact.
It follows (using \cite[Remark~2.11]{WeafFillabilityHigherDimension})
that $\omega$ can be deformed in a collar neighborhood of $\p W$ so
that an outwardly transverse Liouville vector field exists near
$\beltSphere{2n-k-1}$, and we are therefore free to pretend in the
following discussion that $(W,\omega)$ is a \emph{strong} filling of
$(M,\xi)$.
In particular, we may assume that the symplectic structure on a collar
neighborhood close to the belt sphere looks like the symplectic
structure on the boundary of the model of the handle, and we may
identify both.
Let $f\colon \CC^n \to [0,\infty)$ be again the plurisubharmonic
function
\begin{equation*}
  f(\z^-,\z^+, z^\circ) = \norm{\x^-}^2 + \norm{\z^+}^2 + \abs{z^\circ}^2 \; .
\end{equation*}
By the explanations above (see Remark~\ref{rmk: always long
  cylinder}), the belt sphere~$\beltSphere{n + m}$ has a
neighborhood~$\uU_M \subset M$ that is contactomorphic to the cylinder
\begin{equation*}
  C_r := \bigl\{(\z^-, \z^+, z^\circ) \in \CC^n\bigm|\,
  f(\z^-,\z^+, z^\circ) = r^2, \, \norm{\y^-}^2 < 1/2  \bigr\}
\end{equation*}
for arbitrarily small $r \ll 1$ with contact structure~$\hat{\xi}$
given as the kernel of the Liouville form~$\hat{\lambda} =
-\restricted{d^cf}{TC_r}$, and since the cylinder is a level set of
$f$, this also means that $\hat{\xi}$ are the complex tangencies of
$C_r$.
We denote by $\uU_W$ a small neighborhood of $\uU_M$ in $W$ that is
symplectomorphic to the subset
\begin{equation*}
  \bigl\{(\z^-, \z^+, z^\circ) \in \CC^n\bigm|\,
  f(\z^-,\z^+, z^\circ) \in (r^2-\delta,r^2], \, \norm{\y^-}^2 < 1/2 \bigr\}
\end{equation*}
with symplectic form $\omega = -dd^c f$.
Using the embedding of $\uU_W$ into our model, we can extend the
symplectic filling $(W, \omega)$ by attaching the following compact
symplectic subdomain of $\CC^n$:
replace $f$ by $G := \max\{g_A, g_B\}$ which is obtained as the
maximum of the two functions:
\begin{align*}
  g_A(\z^-,\z^+, z^\circ) &:= \norm{\y^+}^2 \\
  \intertext{and}
  g_B(\z^-,\z^+, z^\circ) &:= \norm{\x^-}^2 + \norm{\x^+}^2 +
  \psi\bigl(\norm{\y^-}\bigr) \cdot \norm{\y^+}^2 + \abs{z^\circ}^2
  \;,
\end{align*}
where $\psi$ is a cut-off function that vanishes close to $0$, and
increases until it reaches $1$ close to $\norm{\y^-} = 1/\sqrt{2}$.
Clearly $g_A$ is a weakly plurisubharmonic function.
The function~$g_B$ is strictly plurisubharmonic on a neighborhood of
$\{\y^+ = \0\}$ because the last term of
\begin{equation*}
  -dd^cg_B = 2\, \sum_{r=1}^k dx^-_r \wedge dy^-_r
  + 2\, \sum_{s=1}^m dx^+_s \wedge dy^+_s + 4\, dx^\circ \wedge dy^\circ
  - dd^c \Bigl(\psi\bigl(\norm{\y^-}\bigr) \cdot \norm{\y^+}^2\Bigr)
\end{equation*}
simplifies along this subset to
\begin{equation*}
  \begin{split}
    -dd^c\Bigl(\psi\bigl(\norm{\y^-}\bigr) \cdot \norm{\y^+}^2\Bigr)
    &= - \psi\bigl(\norm{\y^-}\bigr) \, dd^c\norm{\y^+}^2
    - d\norm{\y^+}^2\wedge d^c\psi\bigl(\norm{\y^-}\bigr)  \\
    &\qquad\qquad - d\psi\bigl(\norm{\y^-}\bigr) \wedge
    d^c\norm{\y^+}^2 -\norm{\y^+}^2\, dd^c\psi\bigl(\norm{\y^-}\bigr) \\
    &= 2 \psi\bigl(\norm{\y^-}\bigr) \, \sum_{s=1}^m dx^+_s \wedge
    dy^+_s \; ,
  \end{split}
\end{equation*}
which is weakly plurisubharmonic.
This implies that if the chosen handle~$C_r$ is thin enough, that is
if $r > 0$ has been chosen sufficiently small, then $g_B$ will be
strictly plurisubharmonic on its neighborhood.
For large values of $\norm{\y^-}^2$, the cut-off function is equal to
$1$ and $g_B$ agrees with $f$.
Since it also dominates $g_A$, the level set~$\{G = r^2\}$ glues
smoothly to the given contact manifold, and it bounds a symplectic
manifold
\begin{equation*}
  \widehat W = W \cup  \bigl\{ (\z^-,\z^+, z^\circ) \in \CC^n\bigm|\,
  f(\z^-,\z^+, z^\circ) \ge r^2 \text{ and }
  G (\z^-,\z^+, z^\circ) \le r^2\bigr\}
\end{equation*}
obtained from the given symplectic filling~$W$ by attaching to it the
symplectic domain lying in our model between the level sets~$\{f =
r^2\}$ and $\{G = r^2\}$, see Figures~\ref{fig: deformed handle} and
\ref{fig: deformed handle cut open}.
Note that the boundaries of $W$ and $\widehat W$ are continuously
isotopic.
We also write $\Wmodel$ for the subdomain $\uU_W \cup \bigl(\widehat W
\setminus W\bigr)$ that lies entirely in $\CC^n$.
We decompose the boundary of $\widehat W$ into three domains, which we
denote by $\Mreg$, $M_A$ and $M_B$.
Here $M_A$ and $M_B$ are the parts of $\p \widehat W$ that lie in the
level set $\{g_A = r^2\}$ or $\{g_B = r^2\}$ respectively, and satisfy
$\norm{\y^-}^2 < 1/2$; $\Mreg$ is the remaining part of the boundary
of $\widehat W$, i.e.~the part that is disjoint from the boundary of
the deformed handle.
The boundary of the handle contains a deformation of the belt sphere,
which we will still write as $\beltSphere{n + m} = \{\y^- = \0\}$ even
though it has edges.
The cut-off function~$\psi$ vanishes on a neighborhood of
$\beltSphere{n+m}$, so that $G$ simplifies to
\begin{equation*}
  G (\z^-,\z^+, z^\circ) :=  \max\Bigl\{\norm{\y^+}^2,\,
  \norm{\x^-}^2 +  \norm{\x^+}^2 + \abs{z^\circ}^2 \Bigr\}\;.
\end{equation*}
It follows that $\beltSphere{n+m}$ is the boundary of the poly-disk
\begin{equation*}
  \Bigl\{\norm{\y^+}^2 \le r^2, \, \y^- = \0\Bigr\} \cap 
  \Bigl\{ \norm{\x^-}^2 + \norm{\x^+}^2 + \abs{z^\circ}^2 \le r^2,
  \y^- = \0\Bigr\} \cong \DD^m \times \DD^{n+1} \;.
\end{equation*}

\begin{figure}[htbp]
  \centering
  \includegraphics[height=5cm,keepaspectratio]{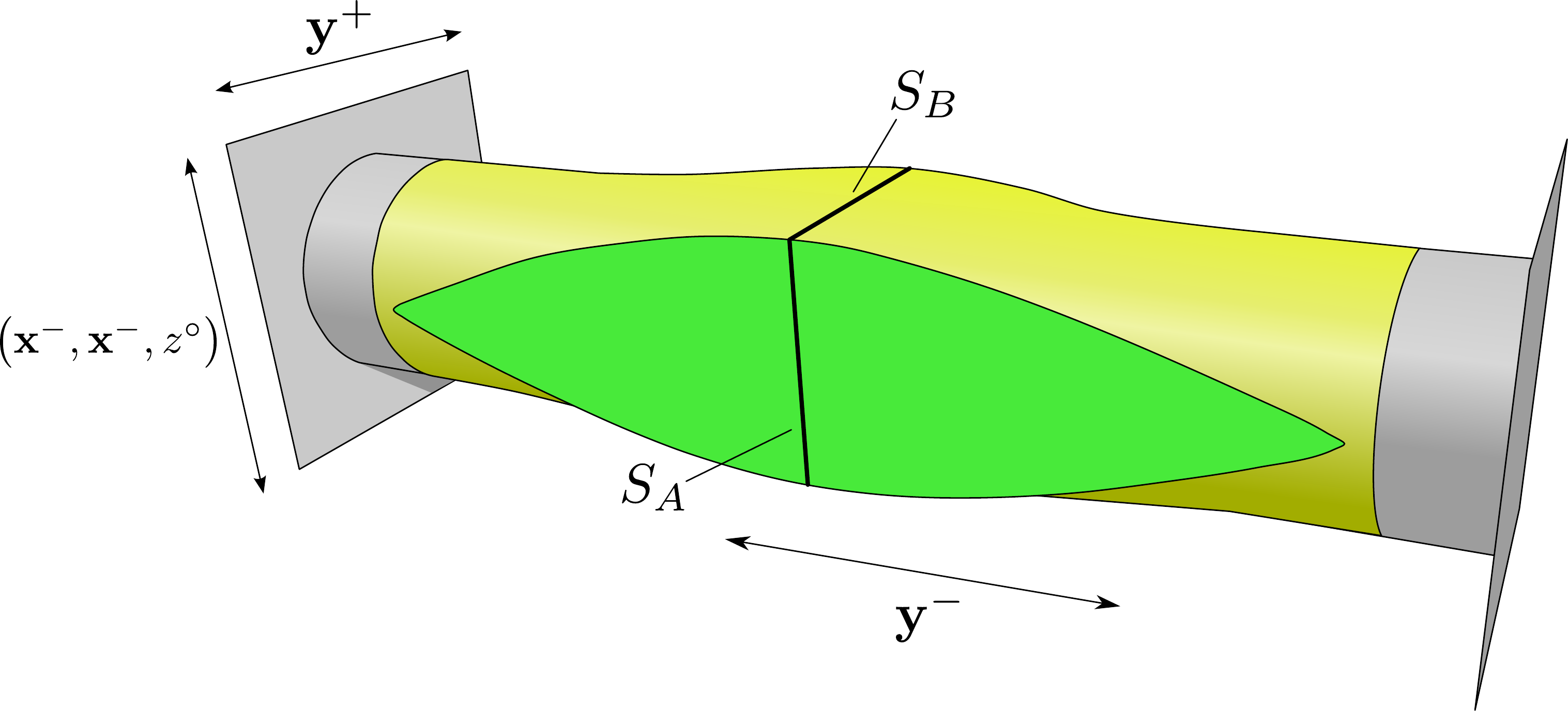}
  \caption{We need to deform the handle to find a suitable family
    of \LOB{}s  in the belt sphere.
    The new handle will have edges.
    The green area in the picture represents the boundary~$M_A$, the
    yellow one is the boundary~$M_B$, and the grey part is
    $\Mreg$.
    The  belt sphere~$S_A\cup S_B$ corresponding to the deformed
    handle also has edges.
    The part~$S_B$ is foliated by \LOB{}s.}\label{fig: deformed
    handle}
\end{figure}

\begin{figure}[htbp]
  \centering
  \includegraphics[height=5cm,keepaspectratio]{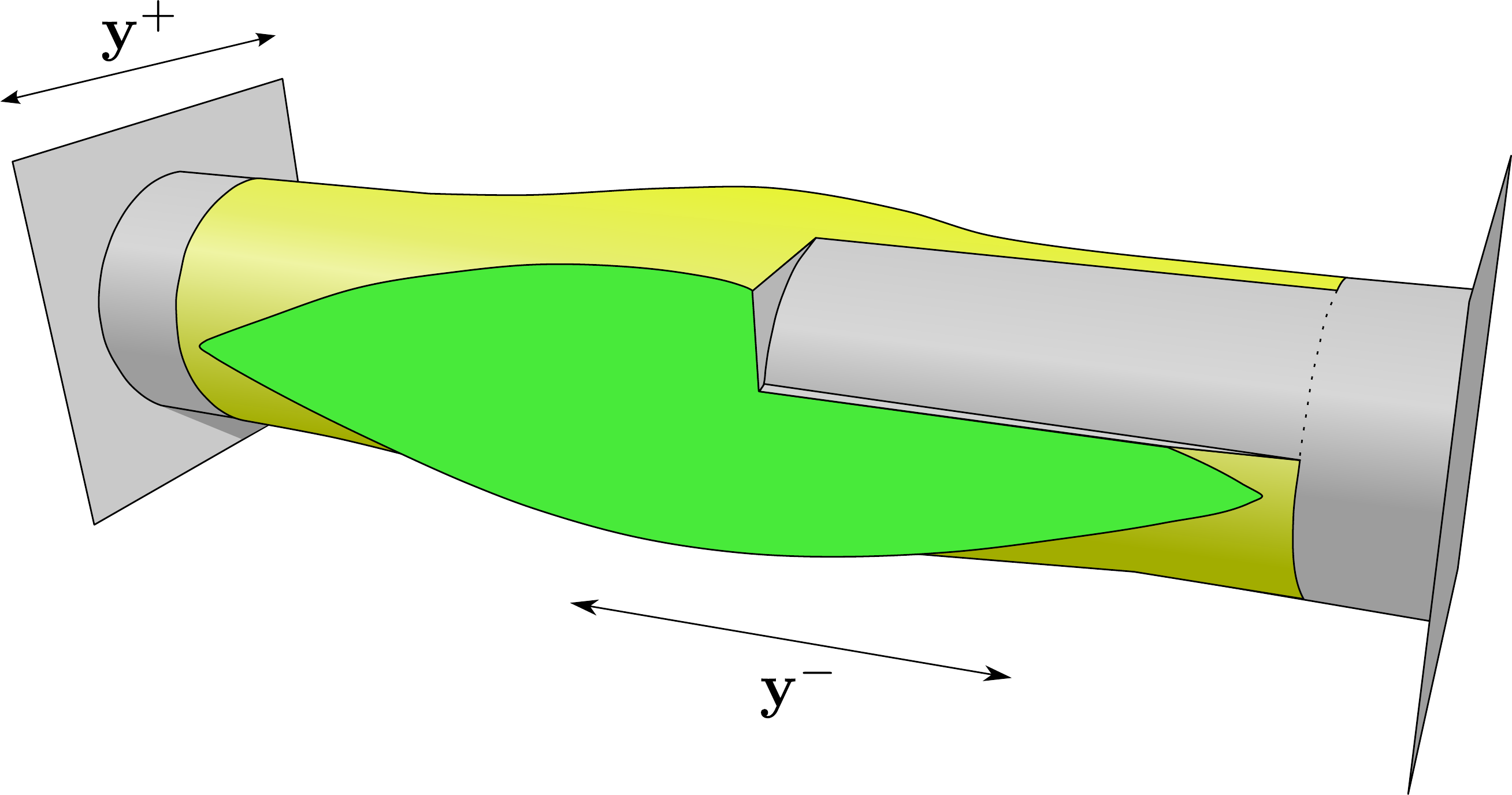}
  \caption{The deformed handle differs from the original one by
      the attachment of a symplectic cobordism.
      This cobordism can also be added to any other symplectic filling
      of the surgered contact manifold.
      The figure shows a cut through this cobordism.  }\label{fig:
      deformed handle cut open}
\end{figure}

We can decompose the boundary of the poly-disk $\DD^m \times
\DD^{n+1}$ as a union of two smooth parts
\begin{equation*}
  \p \Bigl(\DD^m \times \DD^{n+1}\Bigr) =
  \SS^{m-1} \times \DD^{n+1} \cup  \DD^m \times \SS^n \;.
\end{equation*}
We will denote the first part of the belt sphere by
\begin{align*}
  S_A &:= \Bigl\{\y^- = \0,\, \norm{\y^+}^2 = r^2,\, \norm{\x^-}^2 +
  \norm{\x^+}^2 + \abs{z^\circ}^2 \le r^2 \Bigr\} \cong \SS^{m-1} \times
  \DD^{n+1}\;,
  \intertext{but for now, we will be mostly interested in the second
    part}
  S_B &:= \Bigl\{\y^- = \0,\, \norm{\y^+}^2 \le r^2,\, \norm{\x^-}^2 +
  \norm{\x^+}^2 + \abs{z^\circ}^2 = r^2 \Bigr\} \cong \DD^m \times
  \SS^n\;.
\end{align*}
It lies in the $i$-convex hypersurface $M_B := \bigl\{g_B = r^2 \bigr\}$,
whose complex tangencies are the kernel of the $1$-form $-
\restricted{\bigl(d^c g_B\bigr)}{TM_B}$.
Close to $\{\y^- = \0\}$, we compute
\begin{equation*}
  \begin{split}
    -d^c g_B &= 2\, \sum_{r=1}^k x_r^-\, dy_r^- + 2\, \sum_{s=1}^m
    x_s^+\, dy_s^+ + 2\,\bigl(x^\circ\,dy^\circ - y^\circ \, d
    x^\circ\bigr)
    \intertext{which simplifies on $S_B \subset \{\y^- = \0\}$ further
      to}
    -d^c g_B &= 2\, \sum_{s=1}^m x_s^+\, dy_s^+ +
    2\,\bigl(x^\circ\,dy^\circ - y^\circ \, d x^\circ\bigr) \;.
  \end{split}
\end{equation*}
The submanifold $S_B \cong \DD^m \times \SS^n$ can be foliated by the
$n$-spheres with constant $\y^+$-value.
For every fixed value of $\y^+ = \bbf^+ \in \DD^m_r$ we write the
corresponding leaf as
\begin{equation*}
  L_{\bbf^+}
  =  \Bigl\{(\y^-,\y^+) = (\0, \bbf^+),\, \norm{\x^-}^2
  + \norm{\x^+}^2 + \abs{z^\circ}^2 = r^2 \Bigr\} \;.
\end{equation*}
The restriction of $-d^c g_B$ to each of these spheres is
\begin{equation*}
  2\,\bigl(x^\circ\,dy^\circ - y^\circ \, d x^\circ\bigr)
\end{equation*}
so that $L_{\bbf^+}$ is actually a spherical \LOB in the (strictly)
$i$-convex level set~$M_B$.
The binding~$B_{\bbf^+}$ of the \LOB{}~$L_{\bbf^+}$ is given by the set of
points where $z^\circ$ vanishes, i.e.~$B_{\bbf^+} \cong \SS^{n-2}$; the
pages of the open book are the fibers of the map
\begin{equation*}
  \vartheta\colon   L_{\bbf^+} \setminus B_{\bbf^+} \to \SS^1,\,
  (\z^-, \z^+, z^\circ) \mapsto \frac{z^\circ}{\abs{z^\circ}} \;.
\end{equation*}
In the following sections we will study holomorphic disks that each
have boundary on one of the \LOB{}s~$L_{\bbf^+}$.

\section{The space of holomorphic disks attached to the 
belt sphere}
\label{sec:moduliSpace}

We will now construct the moduli space of pseudoholomorphic disks
needed for the proof of Theorem~\ref{thm: main theorem} and show that
it is a smooth manifold with boundary.
We continue with the setup and notation used in \S\ref{sec: deformed
  subcritical handles} above.

\begin{assumptions}\label{assumptions: choice of J}
  Choose an almost complex structure~$J$ on $\widehat W$ with the
  following properties:
  \begin{itemize}
  \item $J$ is tamed by $\omega$;
  \item $J$ agrees on $\Wmodel$ with the standard complex
    structure~$i$;
  \item the unmodified domain~$\Mreg$ in $\p W$ is $J$-convex, and its
    $J$-complex tangencies agree with $\xi$.
  \end{itemize}
\end{assumptions}

\subsection{The top stratum of the moduli space}

We define $\widetilde\mM\bigl(\widehat W, S_B; J\bigr)$ as the moduli
space of ``parametrized'' curves $\bigl(\bbf^+, u, z_0\bigr)$, where:
\begin{itemize}
\item[(i)] $\bbf^+ \in \DD^m_r$ is a point in the $m$-disk
  parametrizing the \LOB{}s~$L_{\bbf^+} \subset S_B$ described in the
  previous section;
\item[(ii)] $u\colon \bigl(\DD^2, \p \DD^2\bigr) \to \bigl(\widehat W,
  L_{\bbf^+} \setminus B_{\bbf^+})$ is a $J$-holomorphic map which is
  trivial in $\pi_2(\widehat W, L_{\bbf^+})$; and
\item[(iii)] $z_0$ is a marked point in the closed unit disk~$\DD^2$.
\end{itemize}
Additionally we require that
\begin{equation*}
  \restricted{\bigl(\vartheta\circ u\bigr)}{\p \DD^2}\colon
  \SS^1 \to \SS^1
\end{equation*}
is a degree~$1$ map, that is, the boundary of each disk makes one turn
around the binding of the open book.
Since $L_{\bbf^+}$ lies in the strictly convex hypersurface~$M_B$, the
map $\restricted{\bigl(\vartheta\circ u\bigr)}{\p \DD^2}$ is a
diffeomorphism, i.e.~the disk intersects every page of the
\LOB~$L_{\bbf^+}$ precisely once;
see~\cite[Corollary~II.1.11]{NiederkrugerHabilitation}.
It is very easy to deduce from this that the disks in
$\widetilde\mM\bigl(\widehat W, S_B; J\bigr)$ are somewhere injective
near the boundary.
However, since we are not really free to choose the almost complex
structure near the boundary, we need more to achieve transversality.
We say that a $J$-holomorphic disk $u$ is \emph{simple} if it is
somewhere injective in an open dense set of $\DD^2$.
For closed holomorphic curves, simple means not multiply covered, but
for disks the situation is more complicated; see
\cite{LazzariniSimpleDisks}.
However our situation is not the most general one, and it is possible
to adapt the arguments of \cite[Proposition 2.5.1]{McDuffSalamonJHolo}
to prove that all disks in $\widetilde\mM\bigl(\widehat W, S_B;
J\bigr)$ are simple.

\begin{lemma}\label{lemma: our disks are simple}
  Every disk $u \in \widetilde\mM\bigl(\widehat W, S_B; J\bigr)$ is
  simple.
\end{lemma}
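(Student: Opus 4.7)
The plan is to argue by contradiction, adapting the factorization scheme of \cite[Proposition~2.5.1]{McDuffSalamonJHolo} to the boundary setting, with the prescribed winding condition on $\vartheta\circ u|_{\p\DD^2}$ providing the extra leverage needed to rule out non-trivial multiple covers.

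First I would record two easy preliminary observations. The disk $u$ is necessarily non-constant because $\vartheta\circ\restricted{u}{\p\DD^2}$ has degree~$1$, and consequently the critical set $Z(u) := \bigl\{z \in \DD^2 \bigm| du(z) = 0\bigr\}$ is discrete by the similarity principle for $J$-holomorphic curves. Moreover, as already noted in the excerpt using \cite[Corollary~II.1.11]{NiederkrugerHabilitation}, the boundary restriction $\restricted{u}{\p\DD^2}$ is an embedded loop in $L_{\bbf^+}\setminus B_{\bbf^+}$: the degree~$1$ property, combined with the fact that each page of the open book is a cross-section met exactly once, forces the boundary to be a simple closed curve. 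By continuity this injectivity propagates inwards, so $u$ is injective on an open collar neighborhood of $\p\DD^2$ in $\DD^2$.

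The heart of the proof is upgrading this collar injectivity to somewhere-injectivity on an open dense subset of $\DD^2$. Suppose, for contradiction, that the set
\begin{equation*}
  R(u) := \bigl\{z \in \DD^2 \setminus Z(u) \bigm| u^{-1}(u(z)) = \{z\}\bigr\}
\end{equation*}
is not dense, so that some open subset $U$ of the interior of $\DD^2$ is disjoint from $R(u)$: for each $z \in U$ there is a distinct $z'$ with $u(z) = u(z')$. Following the strategy of \cite[\S 2.5]{McDuffSalamonJHolo}, one extracts from this data a non-trivial local biholomorphism $\sigma$ defined on an open subset of $\mathring\DD^2$ and satisfying $u \circ \sigma = u$; by Aronszajn's unique continuation theorem this extends to a factorization $u = v \circ \phi$ through a simple $J$-holomorphic map $v \colon \Sigma \to \widehat W$ on some Riemann surface $\Sigma$ with boundary, where $\phi \colon \DD^2 \to \Sigma$ is a branched cover of degree $d \ge 2$.

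This immediately contradicts the conclusion of the first paragraph: the relation $\restricted{u}{\p\DD^2} = \restricted{v}{\p\Sigma}\circ\restricted{\phi}{\p\DD^2}$ would exhibit the embedded boundary loop as a $d$-fold covering of another loop, which is impossible. Hence $R(u)$ must be dense and $u$ is simple. The main technical obstacle I anticipate is adapting the factorization argument of \cite{McDuffSalamonJHolo}, which is written for closed curves, to the disk setting with totally real boundary; however, because our boundary loop is genuinely embedded (rather than merely immersed, as in the general Lazzarini theory \cite{LazzariniSimpleDisks}), the construction of $\sigma$ can be carried out purely in the interior of the disk and one never needs to invoke the more delicate boundary analysis of \cite{LazzariniSimpleDisks}.
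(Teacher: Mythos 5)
Your proof follows the same broad strategy as the paper's: adapt the McDuff--Salamon factorization argument and then use the boundary embeddedness (the degree-one condition on $\vartheta\circ u|_{\p\DD^2}$) to rule out non-trivial multiple covers. The endgame---a degree-$d$ factorization with $d\ge 2$ would force the embedded boundary loop to be a $d$-fold cover of another loop, which is absurd---is exactly the point of the paper's proof, phrased there constructively rather than by contradiction.

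The one place where your route differs, and where it is somewhat underdeveloped, is the middle step. You propose to extract a single local biholomorphism $\sigma$ with $u\circ\sigma=u$ from the mere fact that an open set $U$ is disjoint from the injectivity set $R(u)$, and then extend it by Aronszajn. This is not straightforward as stated: different $z\in U$ may be paired with partner points $z'$ in discontinuous or inconsistent ways, and showing that the correspondence $z\mapsto z'$ assembles into a single well-defined holomorphic $\sigma$ on an open set, let alone that it analytically continues to a global covering, is essentially the same amount of work as the full factorization argument. The paper avoids this issue by working with the \emph{image}: after removing the finite set $X$ of critical points and isolated self-intersections (finite precisely because $u$ is embedded near $\p\DD^2$), the image $u(\DD^2\setminus X)$ is a smoothly embedded $J$-holomorphic submanifold, automatically a Riemann surface $\dot\Sigma$ with connected boundary, and the restriction $u\colon\DD^2\setminus X\to\dot\Sigma$ extends across the punctures to give the branched cover $\varphi\colon\DD^2\to\Sigma$. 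This construction is both cleaner and unconditional, so it also sidesteps the contradiction framing: once you know $\varphi|_{\p\DD^2}$ is a diffeomorphism onto the connected $\p\Sigma$, its degree is~$1$ and $\varphi$ is a biholomorphism. You may want to rephrase your middle step along these lines rather than appealing to an a priori $\sigma$; otherwise the argument is correct.
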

\begin{proof}
  We know $u \colon \DD^2 \to W$ is embedded near the boundary.
  Let $X$ denote the set of points $z \in \DD^2$ such that either of
  the following is true:
  \begin{itemize}
  \item[(i)] $D_zu = 0$, or
  \item[(ii)] There exists a different point $z' \in \DD^2$ such that
    $u$ restricted to disjoint neighborhoods of $z$ and $z'$ has an
    isolated intersection $u(z) = u(z')$.
  \end{itemize}
  (Recall that either $u(z) = u(z')$ is an isolated intersection, or
  there exists neighbourhoods of $z$ and $z'$ with the same image.)
  Standard local results plus the fact that $u$ is embedded at $\p
  \DD^2$ tell us that $X$ is a finite set of interior points.
  The image $u(\DD^2 \setminus X)$ is then a smoothly embedded
  $J$-holomorphic submanifold of $W$; in particular, it is a Riemann
  surface $\dot \Sigma$ with connected boundary and finitely many
  punctures.
  The inclusion of $\dot \Sigma$ into $W$ is then a $J$-holomorphic
  embedding, and it extends over the punctures to a $J$-holomorphic
  map $v \colon \Sigma \to W$, which is not necessarily an embedding
  but has only finitely many critical points and self-intersections.
  At this point we don't know the topology of $\Sigma$, except that it
  has connected boundary.
  But the original map $u$, restricted to $\DD^2 \setminus X$, defines
  a holomorphic map to $\dot \Sigma$, which then extends by removal of
  singularities to a holomorphic map $\varphi \colon \DD^2 \to \Sigma$
  such that $u = v \circ \varphi$.
  Given the properties of $u$ at the boundary, $\varphi$ must restrict
  to a diffeomorphism $\p \DD^2 \to \p \Sigma$, and it maps interior
  to interior.
  So it has degree one, and is therefore biholomorphic.
\end{proof}

Lemma~\ref{lemma: our disks are simple} will allow us to use the
following transversality result.

\begin{proposition}\label{prop: smoothness of moduli space}
  Let $(W,J)$ be an almost complex manifold, and let $\DD^m_\epsilon
  \times L \subset W$ be a submanifold for which every slice $L_\x :=
  \{\x\} \times L$ is a totally real submanifold.
  For generic choices of $J$ satisfying Assumptions~\ref{assumptions:
    choice of J}, the following holds.
  Suppose $u_0\colon \bigl(\DD^2, \p \DD^2\bigr) \to \bigl(W,
  L_\0\bigr)$ is any $J$-holomorphic map such that
  \begin{itemize}
  \item the interior points of $u_0$ do not touch the boundary of $W$;
  \item the boundary of $u_0$ lies in the interior of $L_\0$;
  \item the disk~$u_0$ is \emph{simple}.
  \end{itemize}
  Let $\widetilde\mM$ be the space of all $J$-holomorphic maps
  \begin{equation*}
    u\colon \bigl(\DD^2, \p \DD^2\bigr) \to \bigl(W, L_\x\bigr)
  \end{equation*}
  for all $\x \in \DD^m_\epsilon$.
  Then the space of solutions in $\widetilde\mM$ close to $u_0$ forms
  a smooth ball that has $u_0$ as its center and whose dimension is
  \begin{equation*}
    \dim \mM = \frac{1}{2} \dim W + \mu\bigl(u_0^*T W, u_0^*T L_\0\bigr) + m \;,
  \end{equation*}
  where $\mu\bigl(u_0^*T W, u_0^*T L_\0\bigr)$ denotes the Maslov
  index of the disk~$u_0$.
\end{proposition}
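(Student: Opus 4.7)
The plan is to apply a parametric Sard--Smale / universal moduli space argument, treating the parameter $\bbf^+ \in \DD^m_\epsilon$ on the same footing as the choice of~$J$. First I would define a separable Banach manifold $\jJ$ of $\omega$-tame almost complex structures on $W$ satisfying Assumptions~\ref{assumptions: choice of J}, i.e.\ agreeing with $i$ on $\Wmodel$ and making $\Mreg$ convex with complex tangencies equal to $\xi$. Over $\jJ \times \DD^m_\epsilon$, I would form the usual Banach bundle whose fiber over $(J,\x)$ consists of $W^{k,p}$-maps $u\colon(\DD^2,\p\DD^2)\to(W,L_\x)$ with $k-2/p>0$ and whose Cauchy--Riemann section is $\bar\p_J u = \tfrac12(du + J\circ du\circ j)$. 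The universal moduli space $\widetilde \mM^{\mathrm{univ}}$ is the zero set of this section.

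The linearization of $\bar\p$ at $(u_0,J,\0)$ splits into three pieces: the usual Cauchy--Riemann operator
\begin{equation*}
  D_{u_0}\colon W^{k,p}\bigl(u_0^*TW,\, u_0^*TL_\0\bigr) \to W^{k-1,p}\bigl(\overline{\mathrm{Hom}}_\CC(T\DD^2,u_0^*TW)\bigr),
\end{equation*}
a piece coming from variation of $\x \in T_\0\DD^m_\epsilon \cong \RR^m$ (which shifts the totally real boundary condition by a translation along $\DD^m_\epsilon$), and a piece $Y \mapsto \tfrac12 Y \circ du_0 \circ j$ coming from variation of $J \in T_J \jJ$ supported away from $\Wmodel$ and from the neighborhood of~$\Mreg$ where $J$ is fixed. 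By the Riemann--Hilbert index theorem $D_{u_0}$ is Fredholm of index $n+\mu(u_0^*TW,u_0^*TL_\0)$, so the total universal linearization is Fredholm of index $\tfrac12\dim W + \mu + m$, which is the predicted dimension.

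The crux is surjectivity of the universal linearization. By Lemma~\ref{lemma: our disks are simple}, $u_0$ is simple, so the set $\Omega\subset\DD^2$ of injective points at which $du_0\neq 0$ is open and dense. Since $J$ is fixed on $\Wmodel\cup \nN(\Mreg)$, I need an injective point whose image lies in the open region where $J$ can be perturbed. If $u_0$ is not entirely contained in the fixed region, then unique continuation gives an open nonempty preimage of the complement, which must intersect $\Omega$; at such a point the classical Floer--McDuff argument (as in \cite[Prop.~3.2.1]{McDuffSalamonJHolo}) produces enough $J$-variations to cover any given cokernel element of $D_{u_0}$, yielding surjectivity. If instead $u_0$ lies entirely inside $\Wmodel$, then $u_0$ is a genuinely holomorphic disk in a region of $\CC^n$ with $J=i$, and for such disks the Riemann--Hilbert problem is explicit and automatic transversality can be checked directly from the partial indices of the totally real boundary condition.

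Once surjectivity is established, the implicit function theorem shows $\widetilde \mM^{\mathrm{univ}}$ is a Banach manifold near $(u_0,J,\0)$, and Sard--Smale applied to the projection to $\jJ$ yields a comeager subset of admissible $J$ for which $\widetilde \mM$ is a smooth finite-dimensional manifold near $u_0$ of the claimed dimension; its local structure as a smooth ball centered at $u_0$ follows from the implicit function theorem applied directly. The main obstacle is the last paragraph's dichotomy: because $J$ cannot be perturbed on $\Wmodel$, simplicity alone is not quite enough to give an injective point in the variable region, and we genuinely need to complement the universal argument by the explicit Riemann--Hilbert calculation for disks trapped in $\Wmodel$. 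This is the step that makes essential use of the concrete model from Section~\ref{sec: deformed subcritical handles} rather than purely abstract transversality.
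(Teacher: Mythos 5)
Your proof takes the same structural approach as the paper (universal moduli space over $\jJ\times\DD^m_\epsilon$, Fredholm index computation with the extra $m$ coming from the moving boundary condition, Sard--Smale), but you have made explicit a subtlety that the paper's own proof glosses over. The paper's proof just cites \cite[Section~3.2]{McDuffSalamonJHolo} and \cite[\S 4.5]{Wendl_thesis} and asserts that "the proof of transversality for generic $J$ works as in the standard case," without addressing the fact that Assumptions~\ref{assumptions: choice of J} pin $J$ down to equal $i$ on $\Wmodel$ and fix the complex tangencies near $\Mreg$. As you correctly observe, the standard somewhere-injectivity argument produces perturbations of $J$ supported near injective points of $u_0$, so it only applies if $u_0$ has an injective point whose image lies in the region where $J$ is free; disks trapped entirely in $\Wmodel$ (which do occur — the interior Bishop disks) escape the Sard--Smale argument. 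The paper handles those disks not in the proof of this proposition but via the separate automatic-transversality statement Lemma~\ref{lemma: Bishop disks are regular} and Corollary~\ref{cor: Bishop disks are regular}, combined with the results of \S\ref{sec: bishop disks at the boundary} showing that the only disks with boundary on a \LOB{} confined to $\Wmodel$ are Bishop disks. Your dichotomy (perturb $J$ if $u_0$ escapes; otherwise reduce to an explicit Riemann--Hilbert/partial-index computation) is precisely the logic needed to make the proposition correct as stated, so your proposal is, if anything, more complete than the paper's own proof. One small caveat: in the trapped case you invoke "partial indices of the totally real boundary condition" in general, but what one actually uses (and what the paper proves) is that the only disks trapped in $\Wmodel$ are Bishop disks, for which the partial indices can be computed explicitly; the general claim for an arbitrary holomorphic disk in $\Wmodel$ with boundary on some $L_{\bbf^+}$ would require more justification, so you should be aware that the classification of trapped disks is an essential ingredient here.
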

\begin{proof}
  The result is standard if $m=0$, in which case $\frac{1}{2} \dim W +
  \mu(u_0^*TW,u_0^*TL_0)$ is the Fredholm index of the linearized
  Cauchy-Riemann operator on a suitable Banach space of sections of
  $u_0^*TW$ with totally real boundary condition; see
  \cite[Section~3.2]{McDuffSalamonJHolo}.
  For $m > 0$, the linearized problem is the same as that of the $m=0$
  case, but with an extra $m$-dimensional space of smooth sections
  added to the domain in order to allow for the moving boundary
  condition, cf.~\cite[\S 4.5]{Wendl_thesis}.
  Thus the Fredholm index becomes larger by~$m$.
  Given the corresponding enlargement of the nonlinear configuration
  space, the proof of transversality for generic $J$ works as in the
  standard case by defining a suitable universal moduli space and
  applying the Sard-Smale theorem, see
  e.g.~\cite[Chapter~3]{McDuffSalamonJHolo}.
\end{proof}

Recall from Section~\ref{sec: deformed subcritical handles} that the
family of \LOB{}s is parametrized by a disk $\DD^{m}_r$ of some fixed
radius $r \ll 1$.
We define
\begin{equation*}
  \widetilde \mMint\bigl(\widehat W, S_B; J\bigr) =
  \bigl\{ \bigl(\bbf^+, u, z_0 \bigr) \in
  \widetilde\mM\bigl(\widehat W, S_B; J\bigr) \bigm|\, \norm{\bbf^+}
  < r \bigr\} \;.
\end{equation*}
Since $g_B$ is plurisubharmonic and $g_A$ is weakly plurisubharmonic,
this subspace consists of $J$-holomorphic disks that map the interior
of $\DD^2$ to the interior of $\widehat W$; see also
Proposition~\ref{prop: only Bishop disks touch boundary at interior
  points}.

\begin{corollary}\label{cor: boh}
  The subspace $\widetilde \mMint\bigl(\widehat W, S_B; J\bigr)$ of
  the parametrized moduli space is a smooth manifold with boundary,
  and its dimension is $2n-k+3$, where $k$ is the index of the
  surgery.
  Its boundary consists of triples $\bigl( \bbf^+, u, z_0 \bigr)$ with
  $z_0 \in \p \DD^2$.
\end{corollary}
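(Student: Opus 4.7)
The plan is to apply Proposition~\ref{prop: smoothness of moduli space} to the family of \LOB{}s $L_{\bbf^+}$ parametrized by $\bbf^+ \in \DD^m_r$ (with $m = n-k-1$), and then to account for the extra marked point~$z_0$. First I would verify, for every $(\bbf^+, u, z_0) \in \widetilde{\mMint}(\widehat W, S_B; J)$, the three bulleted hypotheses of that proposition. The simplicity of $u$ is Lemma~\ref{lemma: our disks are simple}. The condition $u(\p\DD^2) \subset L_{\bbf^+}\setminus B_{\bbf^+}$ is built into the definition of $\widetilde{\mM}$, and the inequality $\norm{\bbf^+}<r$ that cuts out $\widetilde{\mMint}$ places $L_{\bbf^+}$ in the interior of the parameter disk, i.e.\ away from the corner $S_A \cap S_B$. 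Finally, interior points of $u$ cannot touch $\p\widehat W$: this is prevented on $\Mreg$ by the $J$-convexity demanded by Assumptions~\ref{assumptions: choice of J}, on $M_B$ by the strict plurisubharmonicity of $g_B$ established in Section~\ref{sec: deformed subcritical handles} together with the maximum principle, and on $M_A$ by the component-wise maximum-principle argument of Lemma~\ref{lemma: disks with boundary in S_A} applied to the weakly plurisubharmonic $g_A$.

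Granted these hypotheses, Proposition~\ref{prop: smoothness of moduli space} endows the unmarked version of $\widetilde{\mMint}$ with a smooth structure of dimension
\[
\tfrac{1}{2}\dim\widehat W + \mu\bigl(u^*T\widehat W,\,u^*TL_{\bbf^+}\bigr) + m.
\]
For a disk homotopic to a Bishop disk, the Maslov index is $\mu = 2$, as one verifies directly in the $\CC^n$-model: a Bishop disk is the standard half-disk in a complex line complementary to the binding $B_{\bbf^+}$, and $\vartheta \circ u|_{\p\DD^2}$ wraps once around $\SS^1$, making the totally real boundary condition rotate by $2\pi$ along $\p\DD^2$. Substituting $\tfrac{1}{2}\dim\widehat W = n$, $\mu = 2$, and $m = n-k-1$, and then adding the $2$ extra dimensions coming from the free marked point $z_0 \in \DD^2$, one obtains $n + 2 + (n-k-1) + 2 = 2n-k+3$, as claimed. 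Since $\bbf^+$ varies over the open disk $\mathring{\DD^m_r}$ and the unmarked moduli space is smooth without boundary, the only source of boundary in $\widetilde{\mMint}$ is the marked point reaching $\p\DD^2$, so $\p\widetilde{\mMint}$ consists precisely of those triples with $z_0 \in \p\DD^2$.

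The main technical point, which is internal to the proof of Proposition~\ref{prop: smoothness of moduli space}, is achieving transversality while $J$ is pinned down to $i$ on $\Wmodel$. Any non-Bishop disk in $\widetilde{\mMint}$ must exit $\Wmodel$ by the maximum-principle arguments above, so after invoking Lemma~\ref{lemma: our disks are simple} one may perturb $J$ freely on the complement of $\Wmodel$ and apply the usual universal moduli space/Sard--Smale argument. The explicit Bishop disks contained in $\Wmodel$ are, in turn, transversally cut out in the integrable model thanks to the standard description of the Bishop family, so no genericity of $J$ is required for them.
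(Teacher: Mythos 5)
Your proof is correct and follows the same route as the paper's: apply Proposition~\ref{prop: smoothness of moduli space} to the family $\{L_{\bbf^+}\}$, compute $\mu = 2$, and add $2$ for the marked point to get $n + 2 + m + 2 = 2n-k+3$. You flesh out the verification of the hypotheses (in particular why interior points of $u$ avoid $\p\widehat W$, which the paper handles by a forward reference to Proposition~\ref{prop: only Bishop disks touch boundary at interior points}) and the transversality subtlety coming from $J$ being pinned to $i$ on $\Wmodel$; both additions are sound, though for the $M_A$ case the appropriate citation is Proposition~\ref{prop: only Bishop disks touch boundary at interior points} rather than the Section~2 analogue Lemma~\ref{lemma: disks with boundary in S_A}.
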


Note that triples $\bigl( \bbf^+, u, z_0 \bigr)$ with $\norm{\bbf^+} =
r$ do not belong to $\widetilde \mMint\bigl(\widehat W, S_B; J\bigr)$,
and therefore are not points of its boundary.

\begin{proof}
  By definition the elements of $\widetilde \mMint\bigl(\widehat W,
  S_B; J\bigr)$ satisfy the hypotheses of Proposition~\ref{prop:
    smoothness of moduli space}, and therefore every $J$-holomorphic
  disk $u \in \widetilde \mMint\bigl(\widehat W, S_B; J\bigr)$ has an
  open neighbourhood which is diffeomorphic to a ball of dimension
  $\frac 12 \dim \widehat{W} + \mu \bigl( u^*T\widehat W, u^*
  TL_{\bbf^+}\bigr) +m +2$ --- the presence of the marked point adds
  $2$ to the index.
  A simple computation shows that $\mu \bigl( u^*T\widehat W, u^*
  TL_{\bbf^+}\bigr) =2$ for all $(\bbf^+, u, z_0)$, so that
  $\widetilde \mMint\bigl(\widehat W, S_B; J\bigr)$ is a smooth
  manifold with boundary of dimension
  \begin{equation*}
    \dim \widetilde \mMint\bigl(\widehat W, S_B; J\bigr) = n+m+4 \;.
  \end{equation*}
  (The boundary points are those with the mark point in $\partial
  \DD^2$.)
  Since $m=n-k-1$, we obtain the desired formula for the dimension.
\end{proof}

In the next subsection we will analyze what happens when
$\norm{\bbf^+} =r$, and we will also show that
$\widetilde\mM\bigl(\widehat W, S_B; J\bigr)$ is non-empty.
To consider geometric disks instead of parametrized ones, we divide
$\widetilde \mM\bigl(\widehat W, S_B; J\bigr)$ by the group of
biholomorphic reparametrizations of $\DD^2\subset \CC$.
We define the moduli space of ``unparametrized'' curves:
\begin{equation*}
  \mM \bigl(\widehat W, S_B; J\bigr) =
  \widetilde \mM\bigl(\widehat W, S_B; J\bigr) / \sim \;,
\end{equation*}
where $\bigl(\bbf^+, u, z_0\bigr) \sim \bigl(\widetilde \bbf^+,
\widetilde u, \widetilde z_0\bigr)$ if and only if $\bbf^+ =
\widetilde \bbf^+$ and there exists a transformation
$\varphi \in \Aut(\DD^2)$ with $u = \widetilde u\circ
\varphi^{-1}$ and $z_0 = \varphi(\widetilde z_0)$.
The action of the reparametrization group $\Aut(\DD^2)$ preserves
$\widetilde \mMint\bigl(\widehat W, S_B; J\bigr)$; we denote its 
quotient by $\mMint\bigl(\widehat W, S_B; J\bigr)$.

\begin{proposition}\label{prop: topology of interior moduli space}
  The subspace $\mMint\bigl(\widehat W, S_B; J\bigr)$ of the moduli
  space is a smooth manifold with boundary of dimension $2n-k$.
\end{proposition}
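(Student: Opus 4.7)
By Corollary~\ref{cor: boh}, $\widetilde\mMint\bigl(\widehat W, S_B; J\bigr)$ is already a smooth manifold with boundary of dimension $2n-k+3$, whose boundary consists precisely of those triples $(\bbf^+, u, z_0)$ with $z_0 \in \p\DD^2$. The plan is to show that the reparametrization group $\Aut(\DD^2)$, of real dimension $3$, acts smoothly, freely, and properly on $\widetilde\mMint$, so that the quotient $\mMint$ inherits the structure of a smooth manifold with boundary of dimension $(2n-k+3) - 3 = 2n-k$. The boundary descends to $\mMint$ because $\Aut(\DD^2)$ preserves $\p\DD^2$.

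Smoothness of the action by pullback and evaluation of the marked point is standard. Freeness is where simplicity enters: if $(\bbf^+, u, z_0) \sim (\bbf^+, u\circ\varphi^{-1}, \varphi(z_0))$ for some $\varphi \in \Aut(\DD^2)$, then in particular $u\circ\varphi^{-1} = u$, and Lemma~\ref{lemma: our disks are simple} forces $\varphi = \id$, since $u$ is injective on an open dense subset of $\DD^2$ and a non-trivial M\"obius transformation of $\DD^2$ has at most isolated fixed points.

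For properness, I would construct an explicit global smooth slice. For each $(\bbf^+, u, z_0) \in \widetilde\mMint$, the boundary map $\restricted{(\vartheta\circ u)}{\p\DD^2}\colon \SS^1 \to \SS^1$ is an orientation-preserving diffeomorphism, so there is a unique $\varphi \in \Aut(\DD^2)$ for which the reparametrized disk $u\circ\varphi^{-1}$ sends $1, e^{2\pi i/3}, e^{4\pi i/3}$ into the pages of $L_{\bbf^+}$ at angles $0, 2\pi/3, 4\pi/3$ respectively; here we use the simple transitivity of $\Aut(\DD^2)$ on positively ordered triples of points on $\p\DD^2$. The subset of $\widetilde\mMint$ cut out by these three normalization conditions is a smooth codimension-$3$ submanifold mapping bijectively onto $\mMint$, which both endows $\mMint$ with a smooth structure of the claimed dimension and exhibits $\widetilde\mMint \to \mMint$ as a principal $\Aut(\DD^2)$-bundle. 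The main obstacle in this scheme, ruling out non-trivial biholomorphic self-maps of the disks, has been neutralized upstream by Lemma~\ref{lemma: our disks are simple}; without simplicity the quotient would only be an orbifold with singularities at multiply covered configurations, and the dimension count would fail.
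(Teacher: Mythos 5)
Your proof is correct and follows essentially the same route as the paper: both arguments build a global slice for the $\Aut(\DD^2)$-action by normalizing the value of $\vartheta\circ u$ at three boundary points, which works because $\vartheta\circ u|_{\p\DD^2}$ is a diffeomorphism and $\Aut(\DD^2)$ acts simply transitively on positively ordered triples in $\p\DD^2$. The paper states this directly and declares the slice a codimension-$3$ submanifold; you wrap the identical slice in the extra scaffolding of a free-and-proper group action and a principal bundle, which is a valid but somewhat heavier framing. Two minor points. First, invoking simplicity of $u$ for freeness is more than is needed: if $\varphi\in\Aut(\DD^2)$ satisfies $u\circ\varphi=u$, then restricting to the boundary gives $\vartheta\circ u\circ\varphi|_{\p\DD^2}=\vartheta\circ u|_{\p\DD^2}$, and since the right-hand side is a diffeomorphism this forces $\varphi|_{\p\DD^2}=\id$ and hence $\varphi=\id$; simplicity of $u$ on the interior plays no role. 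Second, the sentence ``if $(\bbf^+,u,z_0)\sim(\bbf^+,u\circ\varphi^{-1},\varphi(z_0))$, then in particular $u\circ\varphi^{-1}=u$'' is logically off, since the antecedent holds for every $\varphi$ by definition of $\sim$; what you intend to assume is that $\varphi$ lies in the stabilizer of $(\bbf^+,u,z_0)$, i.e.\ $u\circ\varphi^{-1}=u$ and $\varphi(z_0)=z_0$, and then deduce $\varphi=\id$.
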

\begin{proof}
  The map $\vartheta \colon S_B \setminus \{ z^\circ =0 \} \to
  \SS^1$ is globally defined for all \LOB{}s in the family.
  Therefore we can define the subset
  \begin{equation*}
    \widetilde \mM_0\bigl(\widehat W, S_B; J\bigr) \subset \widetilde \mMint
    \bigl(\widehat W, S_B; J\bigr)
  \end{equation*}
  consisting of triples $\bigl( \bbf^+, u, z_0 \bigr)$ such that
  \begin{equation}\label{eq: slice}
    \vartheta\bigl(u(z)\bigr) = 
    \begin{cases}
      1 & \text{if $z = 1$,} \\
      i & \text{if $z = i$,} \\
      -1 & \text{if $z = -1$}. \\
    \end{cases}
  \end{equation}
  We know that $\widetilde \mM_0\bigl(\widehat W, S_B; J\bigr)$ is a
  submanifold of $\widetilde \mMint\bigl(\widehat W, S_B; J\bigr)$
  because $\vartheta \circ \restricted{u}{\p\DD^2}$ is a
  diffeomorphism and the biholomorphism group of the disk is triply
  transitive on $\p\DD^2$.
  Then the subset $\widetilde \mM_0\bigl(\widehat W, S_B; J\bigr)$
  provides a global slice for the action of $\Aut(\DD^2)$ on
  $\widetilde \mMint\bigl(\widehat W, S_B; J\bigr)$.
\end{proof}

\subsection{The Bishop disks}
\label{sec: bishop disks at the boundary}

In this section, we want to study a certain class of disks in $\mM
\bigl(\widehat W, S_B; J\bigr)$ that lie entirely in the model
neighborhood~$\Wmodel$ and that can be described explicitly.
A \defin{Bishop disk} is a disk that we obtain by intersecting a
$z^\circ$-plane in $\CC^n$ with constant $(\z^-, \z^+)$-coordinates
with the model neighborhood~$\Wmodel$.
A possible way to parametrize it is as a map
\begin{equation*}
  u\colon \bigl(\DD^2, \p\DD^2\bigr) \to \bigl(W, L_{\bbf^+}\bigr) \;,
\end{equation*}
with constant coordinates $(\y^-,\y^+) = (\0, \bbf^+)$, constant
$\x^-$ and $\x^+$-coordinates, so we write
\begin{equation*}
  u(z) =  \bigl(\x^-; \x^+ + i\,\bbf^+; Cz\bigr)\;,
\end{equation*}
where $C = \sqrt{r^2 - \norm{\x^-}^2 - \norm{\x^+}^2}$.
The Bishop disks are the buds from which the moduli space will grow,
and it is therefore important to establish that they are Fredholm
regular, meaning that their linearized Cauchy-Riemann operators are
surjective.
This is ensured by the following ``automatic'' transversality lemma
(see~\cite[Section~III.1.3]{NiederkrugerHabilitation}).

\begin{lemma}\label{lemma: Bishop disks are regular}
  Let $u \colon \DD^2 \to \widehat W$ be a Bishop disk with image in
  $\Wmodel$ and boundary mapped to the \LOB~$L_{\bbf^+}$.
  Then its linearized Cauchy-Riemann operator~$D_{(\bbf^+, u)}$,
  defined on suitable Banach space completions with totally real
  boundary condition determined by the \LOB~$L_{\bbf^+}$, is
  surjective.
\end{lemma}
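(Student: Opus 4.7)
The plan is to exploit the product structure of the Weinstein model $\Wmodel \subset (\CC^n, i)$ to split the linearized Cauchy--Riemann operator at a Bishop disk into a direct sum of rank-one operators on trivial complex line bundles, each of which is manifestly surjective; this is the usual ``automatic transversality'' strategy adapted to our very explicit setting.

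First, since the image of $u$ lies entirely in the model neighborhood where $J = i$, the pullback bundle $u^*T\widehat{W}$ inherits the orthogonal complex decomposition $\CC^k \oplus \CC^m \oplus \CC$ corresponding to the factors $(\z^-, \z^+, z^\circ)$, and the linearized Cauchy--Riemann operator $D_{(\bbf^+,u)}$ preserves this decomposition. Indeed, for a section $\xi$ of $u^*T\widehat W$ one has $D_{(\bbf^+,u)} \xi = \bar\partial \xi$ because $J$ is integrable on the model and $u$ is a holomorphic embedding into a single complex line parallel to the $z^\circ$-axis. Hence it suffices to establish surjectivity on each complex line summand separately.

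Second, I would identify the induced totally real boundary condition. A direct computation at $u(e^{i\theta}) = \bigl(\x^-;\,\x^+ + i\bbf^+;\,Ce^{i\theta}\bigr)$ shows that $u^*TL_{\bbf^+}|_{\p\DD^2}$ decomposes into the constant real $\x^-$-axes in each of the $k$ factors of $\CC^k$, the constant real $\x^+$-axes in each of the $m$ factors of $\CC^m$, and the tangent line $iCe^{i\theta}\,\RR$ to the circle in the $z^\circ$-factor, which rotates once as $\theta$ traverses $[0,2\pi]$. The first two contribute Maslov index zero each, while the last contributes Maslov index two, for a total of $\mu\bigl(u^*T\widehat W,\,u^*TL_{\bbf^+}\bigr) = 2$, consistent with the expected dimension formula from Proposition~\ref{prop: smoothness of moduli space}.

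Third, surjectivity on each summand is a textbook computation for the $\bar\partial$-operator on the disk. On each of the $k+m$ constant real-line summands, the kernel consists of real-valued holomorphic functions on $\DD^2$, hence of real constants; this matches the Riemann--Roch index $1+0=1$, so the cokernel vanishes. On the $z^\circ$-summand, the Fredholm index is $1+2=3$, and the kernel is precisely the three-dimensional Lie algebra of $\Aut(\DD^2)$ acting by rotations and M\"obius transformations, so the cokernel again vanishes. Finally, passing to the moving boundary condition by adjoining the $m$-dimensional tangent space to the family $\{L_{\bbf^+}\}$ only enlarges the domain of $D_{(\bbf^+,u)}$ and therefore preserves surjectivity. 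The only subtlety is checking that the bundle, the complex structure, and the boundary condition really do split simultaneously along the entire disk --- which is precisely why we deformed the handle in \S\ref{sec: deformed subcritical handles} so that the Bishop disks become honest pluricomplex slices of the poly-disk model.
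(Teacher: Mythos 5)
Your claim that $u^*TL_{\bbf^+}\restricted{}{\p\DD^2}$ decomposes as the direct sum $\RR^k \oplus \RR^m \oplus iCe^{i\theta}\RR$ compatibly with the splitting $\CC^k\oplus\CC^m\oplus\CC$ is incorrect, and this breaks the central logical step ``it suffices to establish surjectivity on each summand separately.'' At the boundary point $u(e^{i\theta}) = \bigl(\x^-;\,\x^+ + i\bbf^+;\,Ce^{i\theta}\bigr)$, the conditions $\delta\y^-=0$ and $\delta\y^+=\0$ do confine $\delta\z^-$ and $\delta\z^+$ to the real axes, but the remaining defining equation of the sphere $\norm{\x^-}^2+\norm{\x^+}^2+\abs{z^\circ}^2=r^2$ linearizes to
\begin{equation*}
\langle\x^-,\delta\x^-\rangle + \langle\x^+,\delta\x^+\rangle + \RealPart\bigl(\overline{z^\circ}\,\delta z^\circ\bigr) = 0,
\end{equation*}
which couples the $z^\circ$-component to the $\z^-$- and $\z^+$-components whenever $\x^-\ne\0$ or $\x^+\ne\0$. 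Consequently $u^*TL_{\bbf^+}$ is \emph{not} the product boundary condition you describe; the two subspaces are both $n$-dimensional but intersect in an $(n-1)$-dimensional subspace, and they agree only for the central Bishop disk through the binding. The deformation of the handle in \S\ref{sec: deformed subcritical handles} makes the Bishop disks honest affine complex lines, but it does nothing to make the belt-sphere boundary condition split, so your closing sentence does not resolve the issue.

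The conclusion is nonetheless correct, and your index numerology is the right one; what is missing is the observation that the coupling is \emph{triangular}, not the absence of coupling. The $\bar\partial$-equation itself does split, and the boundary conditions on $\xi^-$ and $\xi^+$ are genuinely decoupled (only $\delta\y^\pm=0$ is involved), forcing $\xi^-$ and $\xi^+$ to be real constants. Only after these are determined does the sphere constraint impose an inhomogeneous boundary condition $\RealPart\bigl(Ce^{-i\theta}\xi^\circ(e^{i\theta})\bigr) = -\bigl(\langle\x^-,\xi^-\rangle + \langle\x^+,\xi^+\rangle\bigr)$ on $\xi^\circ$. This is an affine shift of the homogeneous boundary condition $\RealPart\bigl(Ce^{-i\theta}\xi^\circ\bigr)=0$, so it picks out a $3$-dimensional affine subspace of holomorphic $\xi^\circ$, giving $\dim\ker D_{(\bbf^+,u)} = k+m+3 = n+2$, equal to the Fredholm index $\tfrac12\dim W + \mu = n+2$, hence $\mathrm{coker}\,D_{(\bbf^+,u)}=0$. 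So your dimension count is right, but the justification needs to run through the triangular structure of the boundary condition rather than a false splitting. (The paper itself does not spell out a proof here and instead cites Section~III.1.3 of \cite{NiederkrugerHabilitation}.)
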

\begin{corollary}\label{cor: Bishop disks are regular}
  The triples $(\bbf^+, u, z_0)$ where $u$ is a Bishop disk with image
  in $\Wmodel$ are regular points of the moduli space $\widetilde \mM
  \bigl(\widehat W, S_B; J\bigr)$.
\end{corollary}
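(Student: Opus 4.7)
The plan is straightforward: Lemma~\ref{lemma: Bishop disks are regular} supplies the essential analytic surjectivity for a fixed boundary \LOB{}~$L_{\bbf^+}$, and I only need to check that enlarging the problem to include deformations of the parameter $\bbf^+$ and the marked point $z_0$ does not destroy this surjectivity. Since enlarging the domain of a surjective linear operator preserves surjectivity, this is essentially automatic, so the Corollary is really a formal bookkeeping consequence of the preceding lemma.

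More explicitly, at a triple $(\bbf^+, u, z_0) \in \widetilde \mM\bigl(\widehat W, S_B; J\bigr)$ where $u$ is a Bishop disk with image in~$\Wmodel$, the linearized operator for the full parametrized problem acts on a domain of the form
\begin{equation*}
T_{\bbf^+}\DD^m_r \oplus \eE \oplus T_{z_0}\DD^2,
\end{equation*}
where $\eE$ is the $W^{1,p}$-completion of smooth sections of $u^*T\widehat W$ satisfying the standard totally real boundary condition along $u^*TL_{\bbf^+}$. The marked point summand $T_{z_0}\DD^2$ is mapped to $0$, since the Cauchy--Riemann equation is insensitive to the location of $z_0$, and so this factor neither helps nor hurts surjectivity. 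The parameter summand $T_{\bbf^+}\DD^m_r$ enlarges the totally real boundary condition by admitting motion in the direction of the family of \LOB{}s, but the restriction of the full parametric operator to the summand $\eE$ alone (setting variations of $\bbf^+$ and $z_0$ to zero) is exactly the operator $D_{(\bbf^+, u)}$ of Lemma~\ref{lemma: Bishop disks are regular}, which is already surjective.

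Hence the full parametric operator surjects out of a larger Banach space, which is the regularity condition needed in order for the implicit function theorem to produce local smooth manifold charts on $\widetilde \mM\bigl(\widehat W, S_B; J\bigr)$ near $(\bbf^+, u, z_0)$. The local dimension then matches the Fredholm index formula from Proposition~\ref{prop: smoothness of moduli space}. I do not foresee any genuine obstacle here: all the transversality content is carried by the preceding lemma, and what remains is only the observation that enlarging domain summands cannot shrink the image.
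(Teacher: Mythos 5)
Your proposal takes essentially the same route as the paper: both reduce to Lemma~\ref{lemma: Bishop disks are regular} and observe that the linearized operator for the full parametrized problem restricts on a subspace to the already-surjective operator $D_{(\bbf^+, u)}$, so that enlarging the domain by finite-dimensional summands (parameter variations, and in your write-up also the marked point) cannot spoil surjectivity onto the unchanged target. The only difference is that you spell out the marked-point summand explicitly, which the paper leaves implicit; the substance is identical.
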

\begin{proof}
  The relevant linearized operator is the same as $D_{(\bbf^+, u)}$ in
  Lemma~\ref{lemma: Bishop disks are regular}, except that the moving
  boundary condition satisfied by $J$-holomorphic maps in $\widetilde
  \mM\bigl(\widehat W, S_B; J\bigr)$ means that this domain must be
  enlarged by some finite-dimensional space of smooth sections,
  allowing the boundary to move to different \LOB{}s in the family
  (see Appendix~\ref{sec: orientability of the moduli space} for more
  details).
  The target of the operator remains the same, so surjectivity of
  $D_{(\bbf^+, u)}$ in Lemma~\ref{lemma: Bishop disks are regular}
  immediately implies surjectivity on the enlarged domain.
\end{proof}

The rest of this subsection will be concerned with the proof that the
Bishop disks are the only holomorphic disks in $\Wmodel$.

\begin{proposition}\label{prop: only Bishop disks touch boundary at
    interior points}
  If a holomorphic disk
  \begin{equation*}
    u\colon \bigl(\DD^2, \p\DD^2\bigr) \to \bigl(\widehat W, L_{\bbf^+}\bigr)
  \end{equation*}
  touches the boundary of $\widehat W$ at an interior point of
  $\DD^2$, then either it is constant or it is a multiple cover of a
  Bishop disk that is completely contained in $S_A \subset M_A \cap
  \beltSphere{n+m}$.
\end{proposition}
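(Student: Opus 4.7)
The plan is to argue by cases according to which of the three pieces $\Mreg$, $M_A$, or $M_B$ of $\p \widehat W$ the disk $u$ touches at an interior point $z_* \in \mathring \DD^2$. For the easy cases where $u(z_*) \in \Mreg$ or $u(z_*) \in M_B$, these hypersurfaces are strictly $J$-convex --- by Assumption~\ref{assumptions: choice of J} for $\Mreg$, and because $g_B$ is strictly plurisubharmonic on a neighborhood of the handle region for $M_B$ (provided the radius $r$ is small enough, as noted in Section~\ref{sec: deformed subcritical handles}). Since $\Mreg$ and $M_B$ have $\widehat W$-neighborhoods in $\Wmodel$, where $J = i$, the map $u$ is standard holomorphic near $z_*$; composing with the strictly plurisubharmonic defining function yields a subharmonic function on a neighborhood of $z_*$ with a strict interior maximum there. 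The strong maximum principle forces $u$ to be locally constant, and the identity principle for holomorphic maps then makes $u$ globally constant.

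The substantive case is $u(z_*) \in M_A$. Since $g_A = \norm{\y^+}^2$ is only weakly plurisubharmonic, $g_A \circ u$ is merely subharmonic near $z_*$, with boundary values $g_A \circ u = \norm{\bbf^+}^2$ on $\p \DD^2$ and interior value $r^2$ at $z_*$; since a subharmonic function attains its maximum on the boundary, this forces $\norm{\bbf^+} = r$ (so $L_{\bbf^+}$ already lies in the corner $S_A \cap S_B$) and then $g_A \circ u \equiv r^2$ on a neighborhood of $z_*$. I would then globalize by an open-and-closed argument: the set $V := u^{-1}(M_A) \subset \DD^2$ is closed by continuity; to see that it is open, note that $M_A$ has a $\widehat W$-neighborhood contained in $\Wmodel$, so at any $z' \in V$ the local maximum-principle argument just given applies verbatim and produces a whole neighborhood of $z'$ lying in $V$. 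By connectedness of $\DD^2$, one obtains $u(\DD^2) \subset M_A \subset \Wmodel$, and in particular $u$ is standard holomorphic on all of $\DD^2$.

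With $u$ confined to $\Wmodel$, each coordinate function $z^-_r \circ u$, $z^+_s \circ u$, $z^\circ \circ u$ is holomorphic on $\DD^2$, so each $y^-_r \circ u$ and $y^+_s \circ u$ is harmonic. From the defining equations of $L_{\bbf^+}$ the boundary values $\y^- \equiv \0$ and $\y^+ \equiv \bbf^+$ are constant on $\p \DD^2$, so uniqueness for the Dirichlet problem gives $\y^- \circ u \equiv \0$ and $\y^+ \circ u \equiv \bbf^+$ throughout the disk. The Cauchy--Riemann equations then force $\z^- \circ u \equiv \x^-_0$ and $\z^+ \circ u \equiv \x^+_0 + i\bbf^+$ for some constants $\x^-_0, \x^+_0$, so $u$ has the form $z \mapsto \bigl(\x^-_0;\, \x^+_0 + i\bbf^+;\, v(z)\bigr)$ for a holomorphic function $v \colon \DD^2 \to \CC$. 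The constraint $g_B \circ u \le r^2$ gives $\abs{v} \le C$ with $C^2 := r^2 - \norm{\x^-_0}^2 - \norm{\x^+_0}^2$, while the defining equation of $L_{\bbf^+}$ forces $\abs{v} = C$ on $\p \DD^2$; hence $v$ is a finite Blaschke product, exhibiting $u$ as a branched cover of the Bishop disk $z \mapsto \bigl(\x^-_0;\, \x^+_0 + i\bbf^+;\, Cz\bigr)$, which lies in $S_A$ since $\norm{\bbf^+} = r$. The main obstacle here is the open-and-closed step that confines $u$ to $M_A$ --- in particular, verifying that $M_A$ has a $\widehat W$-neighborhood inside $\Wmodel$ so that the local maximum-principle argument can be reapplied at every new boundary point of $V$; once this is settled, the remaining arguments are classical facts about harmonic and holomorphic functions on the disk.
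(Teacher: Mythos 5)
The proposal follows essentially the same route as the paper: a case analysis according to which of $\Mreg$, $M_A$, $M_B$ is touched, the maximum principle for (strictly or weakly) plurisubharmonic functions, and then harmonicity of the real coordinate functions to identify $u$ as a branched cover of a Bishop disk. Your open-and-closed argument to confine $u$ to $M_A$ is the same as the paper's ``extend the open set $U$'' reasoning phrased a bit more systematically, and the Blaschke-product observation at the end is a slightly more explicit version of the paper's ``the $z^\circ$-coordinate spans a round disk.''

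Two corrections are worth making. First, the claim that $\Mreg$ has a $\widehat W$-neighborhood inside $\Wmodel$ is false: by definition $\Mreg$ is the part of $\p\widehat W$ disjoint from the deformed handle, so it lies in the original filling $W$ where $J$ is not standard. The correct justification in that case is Assumptions~\ref{assumptions: choice of J}, which says $\Mreg$ is $J$-convex; one then composes $u$ with a $J$-plurisubharmonic local defining function of $\Mreg$ and applies the maximum principle for $J$-holomorphic curves, reaching the same conclusion but without identifying $J$ with~$i$. Second, the deduction $\norm{\bbf^+}=r$ appears too early in your $M_A$ argument: at the moment you invoke it, $g_A\circ u$ is known to be subharmonic only on a neighborhood of $z_*$, so you cannot yet compare its interior maximum with the boundary values on all of $\p\DD^2$. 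The right order is to first run the open-and-closed step to get $u(\DD^2)\subset M_A$, which gives $g_A\circ u\equiv r^2$ globally; then the boundary condition $g_A\circ u|_{\p\DD^2}=\norm{\bbf^+}^2$ forces $\norm{\bbf^+}=r$. Once these two points are fixed, your argument is complete and agrees in substance with the paper's.
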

\begin{proof}
  Let $z_0 \in \mathring \DD^2$ be a point in the interior of the disk
  at which $u$ touches $M_A$, $M_B$, or $\Mreg$.
  We will obtain the desired statement by using the maximum principle;
  we only need to be a bit more careful compared with the standard
  situation, because the boundary of $\widehat W$ is defined piecewise
  as a union of level sets of different plurisubharmonic functions.
  Assume first that $u(z_0)$ touches $M_B$.
  The function~$g_B$ is not defined on the whole symplectic filling,
  but we may nonetheless assume that $g_B$ exists on a small
  neighborhood of $u(z_0)$, hence we find an open subset $U \subset
  \mathring \DD^2$ containing $z_0$ such that
  \begin{equation*}
    \restricted{\bigl(g_B\circ u\bigr)}{U}\colon U \to \RR
  \end{equation*}
  is a plurisubharmonic function having a maximum at $z_0$.
  It follows from the maximum principle that $\restricted{g_B\circ
    u}{U}$ is constant, and due to strong convexity it even follows
  that the holomorphic map $\restricted{u}{U}$ itself must be
  constant.
  This implies that the open set~$U$ chosen above can in fact be
  extended to the whole disk, and $u$ will be a constant disk.
  Note that this argument also remains valid if $u(z_0)$ lies in the
  edge where $M_A$ and $M_B$ meet.
  The disk lies in the model locally in the domain with $g_B \le r^2$,
  and thus $\restricted{g_B\circ u}{U}$ still has a local maximum at
  $z_0 \in \mathring\DD^2$, as used previously.
  Similarly, the argument can be used verbatim for disks that touch
  $\Mreg$, and this implies in fact that there are no disks at all
  touching $\Mreg$ at interior points, because a constant disk must
  lie in $L_{\bbf^+} \subset \beltSphere{n+m}$, which is disjoint from
  $\Mreg$.
  Let us now assume that the disk~$u$ touches the hypersurface~$M_A$
  at $z_0$.
  Again, we find an open subset~$U \subset \mathring \DD^2$ containing
  $z_0$ for which
  \begin{equation*}
    \restricted{\bigl(g_A\circ u\bigr)}{U}\colon U \to \RR
  \end{equation*}
  is defined and has a maximum at $z_0$.
  By weak plurisubharmonicity, this function must be constant.
  Now it is easy to see that we can choose $U$ to be the whole
  disk~$\DD^2$, because by continuity, the image of every point~$z \in
  \overline{U}$ lies in $\p\widehat W$.
  If $z$ is an interior point of the disk, and if $u(z)$ is an
  interior point of $M_A$, i.e.~it does not lie in $M_A\cap M_B$, then
  we can extend $U$ to a larger open domain that contains $z$ in its
  interior.
  If $z$ is an interior point but $u(z)$ \emph{does} lie in $M_A\cap
  M_B$, then we know by the first part of the proof that $u$ must be a
  constant map.
  In both cases the whole disk lies in $M_A$.
  It remains to see that a nonconstant holomorphic disk lying in
  $M_A$ must be a Bishop disk (or a multiple cover).
  We know that all coordinate functions are harmonic, and hence each
  of them must attain both its maximum and its minimum at a point on
  the boundary of the disk.
  The boundary of $u$ lies in $L_{\bbf^+} \subset \bigl\{\y^- =
  \0\bigr\}$, and hence it follows that all of the $\y^-$-coordinates
  vanish on the disk.
  From the Cauchy-Riemann equation, we then see that the
  $\x^-$-coordinates of the disk will be constant.
  Similarly, the $\y^+$-coordinates of the disk must all be equal to
  $\bbf^+$, because $L_{\bbf^+} \subset \bigl\{\y^+ = \bbf^+\bigr\}$,
  and again by the Cauchy-Riemann equation also the $\x^+$-coordinates
  will be constant.
  The only nonconstant coordinate functions of the disk are the
  $z^\circ$-coordinate, and they span a round disk.
\end{proof}

Recall that
\begin{equation*}
  B_{\bbf^+} = L_{\bbf^+} \cap  \{z^\circ = 0\}
\end{equation*}
is the binding of the \LOB~$L_{\bbf^+}$.

\begin{proposition}\label{prop: bishop disks close to binding}
  There exists an open subset $V \subset \Wmodel$, containing
  $B_{\bbf^+}$ for every $\bbf^+ \in \DD^m_r$, such that every
  holomorphic disk
  \begin{equation*}
    u\colon \bigl(\DD^2, \p\DD^2\bigr) \to \bigl(\widehat W,
    L_{\bbf^+}\bigr)
  \end{equation*}
  in $\widetilde \mM\bigl(\widehat W, S_B; J\bigr)$ intersecting $V$
  must be a Bishop disk up to reparametrization.
\end{proposition}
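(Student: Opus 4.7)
The proof will proceed in two phases: first choose $V \subset \Wmodel$ so small that any disk $u \in \widetilde\mM(\widehat W, S_B; J)$ meeting $V$ is forced to lie entirely in the integrable region $\Wmodel$, and then within $\Wmodel$ identify $u$ with a Bishop disk via an explicit coordinate-wise maximum principle argument.

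For the \emph{identification inside the model}, assume $u(\DD^2)\subset\Wmodel$. Since $J = i$ there, each coordinate function composed with $u$ is harmonic. The boundary condition $u(\p\DD^2) \subset L_{\bbf^+}$ forces $y^-_r\circ u = 0$ and $y^+_s\circ u = (\bbf^+)_s$ on $\p\DD^2$, so by uniqueness of the Dirichlet problem these relations extend to the full disk. The Cauchy-Riemann equations then freeze $x^-_r\circ u$ and $x^+_s\circ u$ to real constants, and only the $z^\circ$-component remains non-constant. The degree-one condition on $\vartheta\circ u|_{\p\DD^2}$ forces the boundary of $z^\circ\circ u$ to wrap once around the circle of radius $\sqrt{r^2 - \norm{\x^-}^2 - \norm{\x^+}^2}$, which identifies $u$, up to biholomorphism, with the Bishop disk attached to the constant data $(\x^-,\x^+,\bbf^+)$. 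This step is essentially a repetition of the last paragraph of the proof of Proposition~\ref{prop: only Bishop disks touch boundary at interior points}.

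For the \emph{confinement}, take $V$ to be a tubular $\delta$-neighborhood of $\bigcup_{\bbf^+\in\DD^m_r}B_{\bbf^+}$ inside $\Wmodel$, with $\delta>0$ to be determined. Disks in $\widetilde\mM$ carry a uniform $\omega$-area bound: since each $u$ is homotopic rel boundary to a Bishop disk and the symplectic form on $\Wmodel$ admits the primitive $\hat\lambda$, Stokes' theorem bounds the area by that of a maximal Bishop disk, itself of order~$r^2$. Combining this area bound with the monotonicity inequality for $J$-holomorphic disks with totally real boundary (as set up in~\cite{NiederkrugerHabilitation}) yields a uniform diameter bound on any $u\in\widetilde\mM$ whose image meets $V$; for $\delta$ small enough, $u(\DD^2)$ cannot reach the interior boundary of $\Wmodel$ inside $\widehat W$ and is therefore contained in $\Wmodel$. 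A more hands-on variant simply applies the maximum principle to the plurisubharmonic function~$g_B$ to trap $u$ inside the sublevel set $\{g_B\le r^2\}$, and then uses the explicit form of the cutoff defining $\Wmodel$ near the binding to refine this trap.

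The \emph{main obstacle} is precisely this confinement step. Once it is in hand, the identification with a Bishop disk reduces to the coordinate-wise maximum principle argument described above. The delicate point is to rule out ``fingers'' of $u$ that first dip into the small neighborhood $V$ of the binding and then escape into the non-model portion of $\widehat W$ before returning to~$L_{\bbf^+}$; this is prevented quantitatively by pairing the homotopy-theoretic area bound with local monotonicity, which together force the disk to remain within a controlled region around any point of its image.
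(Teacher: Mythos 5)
Your second step---assuming $u(\DD^2)\subset\Wmodel$ and then identifying $u$ as a Bishop disk via harmonicity of the coordinate functions and the Cauchy--Riemann equations---matches the paper's argument exactly (the same computation closes both the paper's proof of this proposition and the proof of the preceding one). The genuine gap is in your confinement step, i.e.\ in forcing $u(\DD^2)\subset\Wmodel$ in the first place.

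The area bound plus monotonicity route is not clearly sufficient here. An area bound of the form $E(u)\le 4\pi r^2$ is indeed available, but the geometric scale of $\Wmodel$ is also $O(r)$, so the threshold provided by the monotonicity inequality for a disk travelling from $V$ to the inner boundary of $\Wmodel$ is again $O(r^2)$, with constants that are not obviously favorable. Moreover, shrinking $\delta$ does not improve the situation: the distance from a $\delta$-neighborhood of the binding to the inner boundary of $\Wmodel$ tends to a fixed $O(r)$ quantity as $\delta\to 0$, rather than growing. Your ``hands-on variant'' using $g_B$ is also inconclusive: the maximum principle only yields $g_B\circ u\le r^2$, which holds automatically near the belt sphere and does not confine $u$ to~$\Wmodel$.

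The missing ingredient is a maximum-principle argument built around a bespoke auxiliary function. The paper takes $h(\z^-,\z^+,z^\circ)=\norm{\x^-}^2-\frac{1}{2}\norm{\y^-}^2+\norm{\x^+}^2$, which is weakly plurisubharmonic on $\Wmodel$, attains its maximum value $r^2$ on the bindings, and---crucially---has the property that $-d^ch$ annihilates $TL_{\bbf^+}$: on $L_{\bbf^+}$ one has $\y^-=\0$, $\y^+=\bbf^+$, so $dy^-_r$ and $dy^+_s$ vanish on $TL_{\bbf^+}$ and the remaining $y^-_r\,dx^-_r$ terms vanish because $y^-_r=0$ there. One then defines $V$ as the superlevel set $\{h>r^2-\epsilon\}\cap\Wmodel$; if $h\circ u$ is nonconstant, Sard gives a regular value $r^2-\epsilon'$ and a compact subdomain $G\subset\DD^2$ whose boundary maximum of $h\circ u$ must lie on $\p_+G\subset\p\DD^2$ (any component of $G$ disjoint from $\p\DD^2$ would have an interior maximum). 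At that boundary maximum the Cauchy--Riemann identity gives $\partial_r(h\circ u)=-d^ch(Du\cdot\partial_\varphi)=0$, contradicting the boundary point lemma. Hence $h\circ u$ is constant, the disk lies in a single level set of $h$, and therefore in $V\subset\Wmodel$. This argument is exact and scale-free and is what your quantitative approach needs to replace.
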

\begin{proof}
  Note that
  \begin{equation*}
    h(\z^-,\z^+, z^\circ) =  \norm{\x^-}^2 - \frac{1}{2} \, \norm{\y^-}^2
    + \norm{\x^+}^2
  \end{equation*}
  is a weakly plurisubharmonic function on $\Wmodel$.
  Its value on the binding~$B_{\bbf^+}$ is $r^2$, and it decreases
  along the \LOB.
  If we choose a sufficiently small $\epsilon>0$, we can make sure
  that $V := h^{-1}\bigl((r^2-\epsilon,+ \infty)\bigr) \cap \Wmodel$
  is an open neighborhood of $B_{\bbf^+}$ with $\overline{V} \subset
  \Wmodel$.
  It follows in fact from $g_B \le r^2$ and $h > r^2 - \epsilon$ that
  \begin{equation*}
    g_B(\z^-,\z^+, z^\circ)-h(\z^-,\z^+, z^\circ)= \frac{1}{2}\, \norm{\y^-}^2
    + \psi\bigl(\norm{\y^-}\bigr)\cdot \norm{\y^+}^2
    + \abs{z^\circ}^2 < \epsilon \;,
  \end{equation*}
  so that both the $\y^-$ and the $z^\circ$-coordinates are small in
  $V$, and in particular we can assume that $\psi=0$ on $V$.
  On the other hand,
  \begin{equation*}
    \norm{\x^-}^2 + \norm{\x^+}^2 > r^2 - \epsilon
    +  \frac{1}{2} \, \norm{\y^-}^2 \ge r^2 - \epsilon
  \end{equation*}
  implies that every point in $V$ lies in an arbitrarily small
  neighborhood of $S_B$.
  Let now $u\colon \bigl(\DD^2, \p\DD^2\bigr) \to \bigl(\widehat W,
  L_{\bbf^+}\bigr)$ be a holomorphic disk whose image intersects $V$.
  Assume that $h\circ u$ is not constant: then we can choose by Sard's
  theorem a slightly smaller number $\epsilon' < \epsilon$ for which
  $r^2 - \epsilon'$ will be a regular value of $h\circ u$, so that the
  subdomain
  \begin{equation*}
    G := \bigl\{z \in \DD^2\bigm|\, (h\circ u)(z) \ge r^2-\epsilon' \}
  \end{equation*}
  is compact and has piecewise smooth boundary, which we denote by
  \begin{equation*}
    \p G = \p_+G \cup  \p_-G \;,
  \end{equation*}
  where $\p_+ G = G \cap \p\DD^2$ lies in the boundary of the unit
  disk, and $\p_- G$ lies in the interior of the unit disk.
  Denote the restriction
  \begin{equation*}
    \restricted{u}{G}\colon G \to \Wmodel
  \end{equation*}
  by $u_G$.
  By the maximum principle, it follows that the maximum of $h\circ
  u_G$ on each component of $G$ must lie on the boundary of that
  component.
  Clearly then the boundary of every component of $G$ must intersect
  $\p_+G$, because otherwise $h\circ u_G$ would have an interior
  maximum on that component, so it would be equal to $r^2-\epsilon'$,
  but this contradicts the assumption that $r^2-\epsilon'$ is a
  regular value.
  It follows then that every component of $G$ must intersect
  $\p\DD^2$, and since $h\circ u_G$ is minimal along $\p_- G$, the
  maximum of $h\circ u_G$ must lie at a point $z_0 \in \p_+G \subset
  \p\DD^2$.
  By the boundary point lemma, a version of the maximum principle at
  the boundary (see for example
  \cite[Theorem~II.1.3]{NiederkrugerHabilitation}), the derivative of
  $h\circ u_G$ at $z_0$ in the outward radial direction must be
  strictly positive.
  We choose polar coordinates $(r, \varphi)$ on $\DD^2$. Using the
  fact that $u$ is $J$-holomorphic, we can write
  \begin{equation*}
    \begin{split}
      \partial_r\bigl(h\circ u\bigr) &= dh\bigl( Du
      \cdot \partial_r\bigr)
      = dh \bigl(Du \cdot (-i\cdot\partial_\varphi) \bigr) \\
      &= - dh\bigl(i \cdot Du \cdot \partial_\varphi\bigr) = - d^ch
      \bigl(Du \cdot \partial_\varphi\bigr) \;,
    \end{split}
  \end{equation*}
  but note that
  \begin{equation*}
    - d^c h =  \sum_{r=1}^k \Bigl(2 x_r^-\, dy_r^-
    + \ y_r^- \, dx_r^-\Bigr) + 2\,\sum_{s=1}^m  x_s^+\, dy_s^+ \;.
  \end{equation*}
  We obtain $- d^ch \cdot Du \cdot \partial_\varphi = 0$ along the
  whole boundary of the disk, because the boundary of $u$ lies in the
  \LOB~$L_{\bbf^+}$, which is a subset of $\{\y^- = \0, \y^+ =
  \bbf^+\}$.
  It follows that $\partial_r\bigl(h\circ u\bigr) = - d^ch \bigl(Du
  \cdot \partial_\varphi\bigr)$ vanishes at $z_0$, and by the boundary
  point lemma, the disk must be contained in one of the level sets of
  $h$, so in particular it lies in $V \subset \Wmodel$.
  The rest of the statement follows from standard arguments.
  All of the coordinate functions on $\Wmodel$ are harmonic, hence
  they must attain their maxima and minima on the boundary of the
  disk.
  Since $\y^- = \0$ along $\p\DD^2$, the $\y^-$-coordinates of $u$ are
  zero on the whole disk, and using the Cauchy-Riemann equation, we
  see that the $\x^-$-coordinates must be constant on the disk.
  Similar arguments work for $\y^+$ and $\x^+$, and we finally
  conclude that $u$ must be a Bishop disk.
\end{proof}

\begin{proposition}\label{prop: bishop disks on Lobs in MA}
  Let $L_{\bbf^+}$ be a \LOB that lies in the hypersurface~$M_A$,
  i.e.~$\bbf^+ \in \DD^m_r$ has been chosen such that $\norm{\bbf^+} =
  r$.
  Then up to parametrization, every holomorphic disk~$u$ in
  $\widetilde \mM\bigl(\widehat W, S_B; J\bigr)$ whose boundary lies
  in $ L_{\bbf^+}$ is a Bishop disk.
\end{proposition}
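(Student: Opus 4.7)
The plan is to reduce to Proposition~\ref{prop: only Bishop disks touch boundary at interior points}, which already classifies any disk touching $\p\widehat W$ at an interior point as either constant or a multiple cover of a Bishop disk contained in $S_A$. Both constants and nontrivial multiple covers are ruled out by the degree-one winding condition $\deg\bigl(\vartheta\circ u|_{\p\DD^2}\bigr) = 1$ built into the definition of $\widetilde{\mM}\bigl(\widehat W, S_B; J\bigr)$, so it suffices to prove that every disk $u$ with boundary on $L_{\bbf^+}$ for $\norm{\bbf^+} = r$ must touch $\p\widehat W$ at some interior point of $\DD^2$.

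I would prove this reduction by contradiction, using the Hopf boundary point lemma in the same spirit as Lemma~\ref{lemma: disks with boundary in S_A} from~\S\ref{sec:EFM}. Since both $g_A$ and $g_B$ are invariant under the real-orthogonal $\SO(m)$-action on the $\z^+$-coordinates, I may first rotate so that $\bbf^+ = (r,0,\ldots,0)$, which forces $y_1^+\circ u \equiv r$ along $\p\DD^2$. Continuity of $u$ then gives a collar $\Omega = \{r_0 < \abs{z} \le 1\} \subset \DD^2$ whose image lies inside $\Wmodel$; on $\Omega$ the almost complex structure coincides with the standard $i$, so $y_1^+\circ u$ is honestly harmonic. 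If $u$ were to avoid $\p\widehat W$ at every interior point of $\DD^2$, then $u\bigl(\{r_0 < \abs{z} < 1\}\bigr)$ would lie in the interior of $\widehat W \cap \Wmodel$ where $g_A < r^2$ strictly, so $y_1^+\circ u$ would be strictly less than $r$ away from $\p\DD^2$.

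The Hopf lemma would then force $\p_r(y_1^+\circ u) > 0$ at every point of $\p\DD^2$, but the Cauchy--Riemann identity $\p_r(y_1^+\circ u) = -\p_\varphi(x_1^+\circ u)$ valid at $\p\DD^2 \subset \Omega$ must vanish at any extremum of the real function $x_1^+\circ u$ on $\SS^1$. This contradiction proves the required reduction, and the proposition then follows via the previous proposition together with the degree condition.

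The main obstacle is essentially bookkeeping: the Hopf lemma and the Cauchy--Riemann identity must both be applied inside the collar where $u$ takes values in $\Wmodel$, since elsewhere $J$ is not the standard complex structure and the individual coordinate functions of $u$ need not be harmonic. Once that collar is in place, the remainder is a direct adaptation of the coordinate argument already used in Lemma~\ref{lemma: disks with boundary in S_A} in the closed-manifold setting.
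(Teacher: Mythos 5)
Your argument is correct and matches the paper's approach in substance: both reduce to Proposition~\ref{prop: only Bishop disks touch boundary at interior points} by proving that $u$ must touch $\p\widehat W$ at an interior point, using the $\SO(m)$-rotation to normalize $\bbf^+$, the Cauchy--Riemann identity $\p_\rho(y_1^+\circ u) = -\p_\varphi(x_1^+\circ u)$ evaluated at an extremum of $x_1^+\circ u$ on $\p\DD^2$, and the boundary point (Hopf) lemma. The paper organizes these same ingredients as a two-case dichotomy (tangent vs.\ transverse contact with $M_A$) and concludes constructively that $y_1^+\circ u$ is constant, whereas you frame it directly as a proof by contradiction; the underlying analysis and the final reduction via the degree-one winding condition are identical.
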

\begin{proof}
  Note that $L_{\bbf^+}$ lies in the level set of the weakly
  plurisubharmonic function $g_A\colon \widehat W \to \RR$.
  It suffices to prove that the image of $u$ has to lie entirely in
  $M_A \subset \Wmodel$, as this already implies the desired statement
  by Proposition~\ref{prop: only Bishop disks touch boundary at
    interior points}.
  Since the whole boundary $u\bigl(\p\DD^2\bigr)$ lies in $M_A$, we
  can find a closed annulus $G \subset \DD^2$ having $\p\DD^2$ as one
  of its boundary components such that
  \begin{equation*}
    \restricted{\bigl(g_A\circ u\bigr)}{G}\colon G \to \RR
  \end{equation*}
  is defined and everywhere weakly plurisubharmonic, and it takes its
  maximum along $\p\DD^2 \subset G$.
  Assume first that the disk~$u$ is tangent to $M_A$ at one of its
  boundary points.
  We can apply the boundary point lemma around this point (see again
  \cite[Theorem~II.1.3]{NiederkrugerHabilitation}) to deduce that
  $\restricted{\bigl(g_A\circ u\bigr)}{G}$ has to be constant on all
  of $G$.
  In particular this implies that $u(G)$ lies in $M_A$, and $u$
  touches $M_A$ also with one of its interior points.
  Proposition~\ref{prop: only Bishop disks touch boundary at interior
    points} then implies that $u$ is either constant or one of the
  Bishop disks.
  Conversely suppose that $u$ is everywhere transverse to $M_A$,
  meaning that $\partial_r \bigl(g_A\circ u\bigr) (z)$ is
  \emph{strictly} positive for every $z \in \p\DD^2$.
  The restriction $\restricted{u}{G}$ is a $J$-holomorphic map whose
  image lies in $\Wmodel$; moreover, $g_A\circ u \equiv r^2$ on
  $\p\DD^2$ and $g_A\circ u < r^2$ on the inner boundary of $G$.
  Introduce on $G$ the polar coordinates $z= \rho e^{i\varphi}$.
  Note that along $\p\DD^2$, all of the $\y^+$-coordinates are
  constant in the $\varphi$-direction, because the boundary of the
  disk lies in the \LOB~$L_{\bbf^+}$.
  Multiplying the complex coordinates~$\z^+$ by a suitable
  $\SO(m)$-matrix (the standard complex structure~$i$ and the
  functions $g_A$, $g_B$ are invariant under such a multiplication),
  we may assume that $\bbf^+ = (r,0,\dotsc,0)$.
  It follows that the $y_1^+$-coordinate of $\restricted{u}{G}$ has
  its maximum on $\p\DD^2$.
  Note now that the $x_1^+$-coordinate of $\restricted{u}{\SS^1}$ is
  bounded, and hence it necessarily must take a maximum at some point
  $e^{i\varphi_0} \in \SS^1 = \p\DD^2$, so that
  \begin{equation*}
    \restricted{\frac{d}{d\varphi}}{\varphi = \varphi_0}
    x_1^+\Bigl(u\bigl(e^{i\varphi}\bigr)\Bigr) =  0\;.
  \end{equation*}
  Again, we can use complex multiplication to see
  $i\cdot \partial_\rho = \partial_\varphi$, hence
  \begin{equation*}
    dy_1^+\bigl(Du \cdot \partial_\rho\bigr) =
    dy_1^+\bigl(Du \cdot (-i\cdot\partial_\varphi)\bigr)
    = - dy_1^+\bigl(i\cdot Du\cdot\partial_\varphi\bigr)
    = - dx_1^+ \bigl(Du\cdot\partial_\varphi\bigr) \;,
  \end{equation*}
  and in particular the radial derivative of $y_1^+$ vanishes at
  $e^{i\varphi_0}$, so that by the boundary point lemma, $y_1^+$ must
  be constant on all of $G$.
  Using the fact that $r^2 = \abs{y_1^+}^2 \le g_A(\z^-,\z^+, z^\circ)
  \le r^2$ everywhere on $G$, we deduce that all of $u(G)$ lies in
  $M_A$.
  In particular, $u$ touches $M_A$ at an interior point, which allows
  us to conclude the proof by applying Proposition~\ref{prop: only
    Bishop disks touch boundary at interior points}.
\end{proof}

We end this subsection with a description of the global topology of
the moduli spaces $\widetilde \mM\bigl(\widehat W, S_B; J\bigr)$ and
$\mM\bigl(\widehat W, S_B; J\bigr)$.

\begin{proposition}\label{prop: topology of the uncompactified moduli
    space}
  The parametrized moduli space $\widetilde \mM\bigl(\widehat W, S_B;
  J\bigr)$ is a smooth $(2n-k+3)$-dimensional manifold with boundary
  and corners.
  Its boundary has two smooth strata, one corresponding to elements
  $(\bbf^+, u, z_0)$ with $\norm{\bbf^+} =r$, and the other
  corresponding to elements $(\bbf^+, u, z_0)$ with $\abs{z_0}=1$.
  The moduli space $ \mM\bigl(\widehat W, S_B; J\bigr)$ is a smooth
  $(2n-k)$-dimensional manifold with boundary and corners, which
  decomposes as a product
  \begin{equation*}
    \mM\bigl(\widehat W, S_B; J\bigr) = \Sigma \times \DD^2\;,
  \end{equation*}
  where $\Sigma$ is a (non-compact) manifold with boundary.
\end{proposition}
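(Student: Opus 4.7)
The plan is to assemble the proposition from pieces that have mostly been established in the preceding lemmas, corollaries, and propositions of this section, adding only the analysis at the boundary strata and the product decomposition.

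First, I would observe that the \emph{interior} of $\widetilde \mM(\widehat W, S_B;J)$, namely the subset with $\norm{\bbf^+} < r$ and $\abs{z_0} < 1$, together with its smooth boundary face $\{\abs{z_0}=1\}$, is handled directly by Corollary~\ref{cor: boh}. The Fredholm index computation
\begin{equation*}
  \dim \widetilde\mM = \tfrac{1}{2}\dim \widehat W + \mu\bigl(u^*T\widehat W, u^*TL_{\bbf^+}\bigr) + m + 2 = n + 2 + (n-k-1) + 2 = 2n-k+3,
\end{equation*}
using $\mu=2$ for these disks and the extra $+2$ from the marked point, gives the asserted dimension.

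Next I would address the other boundary stratum $\{\norm{\bbf^+}=r\}$. By Proposition~\ref{prop: bishop disks on Lobs in MA}, every element $(\bbf^+, u, z_0) \in \widetilde\mM$ with $\norm{\bbf^+}=r$ is, up to reparametrization, a Bishop disk whose image lies in $S_A \subset \Wmodel$. By Corollary~\ref{cor: Bishop disks are regular}, such a Bishop disk is a regular point of $\widetilde\mM$, so the linearized Cauchy-Riemann operator (including the moving-boundary contribution of dimension $m$) is surjective. The implicit function theorem in the Banach setup then produces a local chart in which the stratum $\{\norm{\bbf^+}=r\}$ appears as the image of a smooth half-space coordinate. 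Since the two stratifying conditions $\norm{\bbf^+}=r$ and $\abs{z_0}=1$ depend on disjoint coordinates (one lives in the parameter space of \LOB{}s, the other in the domain $\DD^2$), they are automatically transverse, so together they yield a smooth manifold with boundary and corners structure on $\widetilde\mM$. The main technical point here is the regularity assertion at $\norm{\bbf^+}=r$, which is not immediately covered by the generic transversality of Proposition~\ref{prop: smoothness of moduli space} because the almost complex structure is fixed near the boundary; it is precisely Corollary~\ref{cor: Bishop disks are regular} (automatic transversality for Bishop disks) that makes this work.

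Finally, for the unparametrized moduli space and the product decomposition: dividing out by the free three-dimensional action of $\Aut(\DD^2)$ gives $\dim \mM = (2n-k+3) - 3 = 2n-k$. To see $\mM \cong \Sigma \times \DD^2$, I would imitate the slicing argument of Proposition~\ref{prop: topology of interior moduli space}: since $\vartheta \circ \restricted{u}{\p\DD^2}$ is always a diffeomorphism $\SS^1 \to \SS^1$ and $\Aut(\DD^2)$ acts triply transitively on $\p\DD^2$, one obtains a \emph{global} smooth slice $\widetilde\mM_0^{(0)} \subset \widetilde\mM^{(0)}$ in the unmarked moduli space $\widetilde\mM^{(0)}$ by imposing condition~\eqref{eq: slice}. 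Writing any element of $\mM$ uniquely as a slice representative $(\bbf^+, u) \in \widetilde\mM_0^{(0)}$ together with a free marked point $z_0 \in \DD^2$ realizes $\mM$ as $\Sigma \times \DD^2$ with $\Sigma := \widetilde\mM_0^{(0)}$. The boundary of $\Sigma$ corresponds to $\{\norm{\bbf^+}=r\}$, while the $\p\DD^2$-face of $\mM$ comes entirely from the $\DD^2$ factor; the corner $\p\Sigma \times \p\DD^2$ then matches the corner locus of $\widetilde\mM$ inherited above. The non-compactness of $\Sigma$ reflects bubbling and breaking phenomena that are addressed separately in the compactification discussion.
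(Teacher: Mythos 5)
Your proposal tracks the paper's proof closely: the interior and the $\{\abs{z_0}=1\}$ face are handled by Corollary~\ref{cor: boh}; the $\{\norm{\bbf^+}=r\}$ face is handled by combining Proposition~\ref{prop: bishop disks on Lobs in MA} with the automatic transversality of Corollary~\ref{cor: Bishop disks are regular} (you correctly identify this as the essential point, since generic perturbations of~$J$ are unavailable near $\p\widehat W$); and the product structure $\mM \cong \Sigma\times\DD^2$ comes from the global $\Aut(\DD^2)$-slice of Proposition~\ref{prop: topology of interior moduli space} together with the free marked point. This is exactly the route taken in the paper.

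The one step you omit, and which the paper treats explicitly, is ruling out a potential \emph{third} boundary stratum consisting of disks whose boundary circle approaches (or touches) the binding $B_{\bbf^+}$. The totally real boundary condition $L_{\bbf^+}\setminus B_{\bbf^+}$ is an open manifold, so a priori the moduli space could acquire boundary behaviour where $u(\p\DD^2)$ degenerates toward the binding. The paper disposes of this by invoking Proposition~\ref{prop: bishop disks close to binding}: any disk whose image enters the neighbourhood $V$ of the binding is a Bishop disk, and the only Bishop disks touching the binding are constant, hence excluded from $\widetilde\mM$; what remains is a non-compactness of the moduli space rather than a boundary face, and the constant disks are added only in the compactification $\overline\mM$. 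You should insert this argument to justify that the two strata you exhibit are in fact the only ones. A smaller quibble: the non-compactness of $\Sigma$ in the filling setting comes from sphere bubbling and from this collapse to the binding, not from ``breaking'' (which would require a cylindrical or cobordism end).

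Beyond that, your additions (the explicit Fredholm index bookkeeping $\tfrac12\dim\widehat W + \mu + m + 2 = 2n-k+3$, and the observation that the two constraints $\norm{\bbf^+}\le r$, $\abs{z_0}\le 1$ live in independent factors and hence meet transversally to produce corners) are correct and make the corner structure slightly more explicit than the paper's phrasing, but do not change the substance of the argument.
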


\begin{proof}
  Let $(\bbf^+,u,z_0)$ be an element of $\widetilde
  \mMint\bigl(\widehat W, S_B; J\bigr)$.
  Since $\widetilde \mMint\bigl(\widehat W, S_B; J\bigr)$ is open in
  $\widetilde \mM\bigl(\widehat W, S_B; J\bigr)$, it follows from
  Proposition~\ref{prop: only Bishop disks touch boundary at interior
    points} that the image of $u$ does not touch $\p \widehat W$ with
  any interior point if $\norm{\bbf^+} <r$, and that $(\bbf^+,u,z_0)$
  has a neighborhood in $\widetilde \mM\bigl(\widehat W, S_B; J\bigr)$
  which is diffeomorphic to an open ball $\mathring \DD^{2n-k+3}$ if
  $\abs{z_0}<1$, or to a half-ball $\mathring \DD^{2n-k+1}\times
  \bigl\{z \in \CC\bigm|\, \ImaginaryPart z \ge 0 \bigr\}$ if
  $\abs{z_0}=1$.
  Now we consider the elements $(\bbf^+,u,z_0)$ of $\widetilde
  \mM\bigl(\widehat W, S_B; J\bigr)$ such that the image of $u$
  touches $\p\widehat W$ at an interior point.
  Again by Proposition~\ref{prop: only Bishop disks touch boundary at
    interior points} this implies $\norm{\bbf^+}=r$.
  We know by Proposition~\ref{prop: bishop disks on Lobs in MA} that
  $u$ will be a Bishop disk up to reparametrization. Since Bishop
  disks are regular by Corollary~\ref{cor: Bishop disks are regular},
  a neighborhood of $(\bbf^+, u, z)$ in $\widetilde \mM\bigl(\widehat
  W, S_B; J\bigr)$ looks like the neighborhood of a boundary point of
  an $(n+m+2)$-manifold.
  A priori the boundary of the unparametrized moduli space may contain
  one more stratum consisting of triples $(\bbf^+, u, z_0)$ such that
  the image of $u$ touches the binding $B_{\bbf^+}$ of the
  \LOB~$L_{\bbf^+}$ at a boundary point.
  However, in this case, the image of $u$ would have to intersect the
  neighborhood~$V$ from Proposition~\ref{prop: bishop disks close to
    binding}, and therefore $u$ would be a Bishop disk.
  Since Bishop disks which intersect the binding are constant and, by
  definition, do not belong to $\widetilde \mM\bigl(\widehat W, S_B;
  J\bigr)$, the possibility that the image of $u$ touches $B_{\bbf^+}$
  cannot occur.
  (We will see later that those constant disks must be added to the
  compactification of the moduli space.)

  By definition, the space $\widetilde \mM\bigl(\widehat W, S_B;
  J\bigr)$ of parametrized disks with a marked point is a trivial disk
  bundle.
  The moduli space~$\mM\bigl(\widehat W, S_B; J\bigr)$ is then a disk
  bundle over the moduli space of the same holomorphic curves without
  the marked point, and the projection map is simply the map that
  forgets the marked point.
  In our case though, it is even true that $\mM\bigl(\widehat W, S_B;
  J\bigr)$ is a \emph{trivial} disk bundle.
  Recall that the map $\vartheta\colon S_B \setminus \{z^\circ = 0\}
  \to \SS^1$ is globally defined for all \LOB{}s in the family.
  Hence every equivalence class~$[\bbf^+, u, z]$ in $\mM\bigl(\widehat
  W, S_B; J\bigr)$ has a unique representative $(\bbf^+, u_0, z_0)$,
  defined by fixing a parametrization of $u$ such that
  \begin{equation*}
    \vartheta\bigl(u(z)\bigr) = 
    \begin{cases}
      1 & \text{if $z = 1$,} \\
      i & \text{if $z = i$,} \\
      -1 & \text{if $z = -1$,} \\
    \end{cases}
  \end{equation*}
  as we did in the proof of Proposition~\ref{prop: topology of
    interior moduli space}.
  This choice of parametrization gives a global slice for the action
  of $\Aut(\DD^2)$ on $\widetilde \mM\bigl(\widehat W,
  S_B; J\bigr)$, which identifies $\mM\bigl(\widehat W, S_B; J\bigr)$
  with a subset of $\widetilde \mM\bigl(\widehat W, S_B; J\bigr)$.
  Then $\mM\bigl(\widehat W, S_B; J\bigr)$ is a trivial disk bundle
  because the same is true of $\widetilde \mM\bigl(\widehat W, S_B;
  J\bigr)$.
\end{proof}

\subsection{Topology of the compactified moduli space}
\label{sec: topology moduli space}

In the previous sections, we introduced the moduli space we want to
use, and we showed that all the disks intersecting certain domains of
the model neighborhood~$\Wmodel$ must be Bishop disks.
Our aim in this section is to study the topology of the natural
compactification of that moduli space.
The compactification of $\mM\bigl(\widehat W, S_B; J\bigr)$ involves
two phenomena:
(1)~Gromov convergence to stable nodal holomorphic disks (see
e.g.~\cite{Frauenfelder_disks,FrauenfelderZehmisch_disks}), and
(2)~degeneration to constant maps in the binding of a \LOB.
In order to accommodate the latter without losing the extra
disk-bundle structure provided by the marked point, we shall (as in
Proposition~\ref{prop: topology of interior moduli space}) replace
$\mM\bigl(\widehat W, S_B; J\bigr)$ by the space $\widetilde
\mM_0\bigl(\widehat W, S_B; J\bigr)$ of parametrized curves that
satisfy the condition~\eqref{eq: slice}.
This introduces a hint of extra book-keeping into the following
statement, but the reader should keep in mind that the space we are
actually interested in is always $\mM\bigl(\widehat W, S_B; J\bigr)$.

\begin{proposition}\label{prop: Gromov compactness}
  Any sequence $(\bbf^+_j, u_j, z_j) \in \widetilde \mM\bigl(\widehat
  W, S_B; J\bigr)$ satisfying the condition~\eqref{eq: slice} has a
  subsequence that converges to a unique configuration of one of the
  following types:
  \begin{enumerate}
  \item An element of the moduli space $(\bbf^+_\infty, u_\infty,
    z_\infty) \in \widetilde \mM\bigl(\widehat W, S_B; J\bigr)$, still
    satisfying~\eqref{eq: slice};
  \item A triple $(\bbf^+_\infty, p_\infty, z_\infty)$, where
    $\bbf^+_\infty \in \DD^m$, $p_\infty$ represents the constant map
    at the point $p_\infty \in B_{\bbf^+_\infty}$ and $z_\infty \in
    \DD^2$; or
  \item A triple $(\bbf^+_\infty, \mathfrak{t}_\infty, z_\infty)$,
    where $\bbf^+_\infty \in \DD^m$, $\mathfrak{t}_\infty$ is a stable
    nodal holomorphic disk with boundary on $L_{\bbf^+_\infty}
    \subset\beltSphere{n+m}$, consisting of a single nonconstant disk
    with a tree of sphere bubbles attached, and $z_\infty$ is a marked
    point on the domain of $\mathbf{t}_\infty$.
  \end{enumerate}
  Convergence in cases~(1) and (2) is in the $C^\infty$-topology, and
  in case (3) it is in the sense of Gromov.
  If $W$ is symplectically aspherical, then the third case does not
  occur.
\end{proposition}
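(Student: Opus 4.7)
The plan is to combine compactness of all the parameter data with a uniform energy estimate, apply standard Gromov compactness, and then classify the possible nodal limits by ruling out disk bubbles via a winding-number argument.

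First, since $\DD^m_r$ and $\DD^2$ are compact, after passing to a subsequence I may assume $\bbf^+_j \to \bbf^+_\infty \in \DD^m_r$ and $z_j \to z_\infty \in \DD^2$. The main analytic input is a uniform bound on the symplectic energy $E(u_j) = \int_{\DD^2} u_j^*\omega$. For this, I would use that each $u_j$ is trivial in $\pi_2(\widehat W, L_{\bbf^+_j})$ and has boundary winding number one around the binding, so its $\omega$-area agrees with that of a Bishop disk up to contributions from $\omega$ paired with classes in $\pi_2(\widehat W)$; the (semi)aspherical assumption on $(W,\omega)$ kills this ambiguity and yields a uniform bound. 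With the energy bound in hand, standard Gromov compactness for $J$-holomorphic disks with moving totally real boundary condition (see e.g.~\cite{Frauenfelder_disks,FrauenfelderZehmisch_disks}) provides a subsequential limit $\mathfrak{t}_\infty$, namely a stable nodal configuration of $J$-holomorphic disks with boundary on $L_{\bbf^+_\infty}$ together with interior sphere bubble trees, plus the limit marked point $z_\infty$.

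The hard step is to rule out disk bubbles at the boundary. The relevant invariant is the boundary winding degree of $\vartheta \circ u|_{\p\DD^2}$ around the binding, which is preserved under Gromov convergence and sums to $1$ over all disk components of $\mathfrak{t}_\infty$. Any non-constant disk bubble contributes a non-negative winding; hence either the principal disk retains winding $1$ with all bubbles contributing winding $0$, or the full winding concentrates on a single bubble. A non-constant $J$-holomorphic disk with winding $0$ on $L_{\bbf^+_\infty}$, however, would by the maximum-principle arguments of Propositions~\ref{prop: only Bishop disks touch boundary at interior points} and~\ref{prop: bishop disks close to binding} have to lie in $\Wmodel$ and hence be a Bishop disk, whose boundary winding is $1$, a contradiction. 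This rules out all boundary bubbling and leaves only a single principal disk together with possible interior sphere bubble trees.

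Only three scenarios are then possible. If the principal disk is non-constant and no spheres bubble, convergence is $C^\infty$ and the slice condition~\eqref{eq: slice} is inherited, giving case~(1). If the principal disk collapses in $C^0$ to a constant map, then by the binding-neighborhood argument of Proposition~\ref{prop: bishop disks close to binding} the constant value must lie in the binding $B_{\bbf^+_\infty}$, giving case~(2) with the slice condition vacuous. If a non-empty sphere bubble tree is present, we are in case~(3). Finally, if $(W,\omega)$ is symplectically aspherical, any non-constant $J$-holomorphic sphere would have strictly positive $\omega$-area, contradicting $\omega|_{\pi_2(\widehat W)} = 0$, so case~(3) is excluded. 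The main obstacle throughout is the disk-bubble exclusion in the second step, which is precisely where the subcritical geometry of the belt sphere and the (weak) plurisubharmonicity of $g_A$ and $g_B$ enter essentially.
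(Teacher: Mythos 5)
Your overall plan (extract a convergent subsequence of parameters, apply Gromov compactness, rule out disk bubbles, classify the remaining limits) matches the paper's, but the key step---ruling out boundary disk bubbles---has a genuine gap.

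You claim that a nonconstant disk bubble $v$ with $\vartheta$-winding $0$ ``would by the maximum-principle arguments of Propositions~\ref{prop: only Bishop disks touch boundary at interior points} and~\ref{prop: bishop disks close to binding} have to lie in $\Wmodel$ and hence be a Bishop disk.'' Those propositions say no such thing: the first concerns disks whose \emph{interior} touches $\p\widehat W$, and the second concerns disks whose image meets a specific neighborhood $V$ of the binding. A hypothetical bubble with winding $0$ whose boundary stays away from the binding and whose interior stays in the interior of $\widehat W$ is constrained by neither. You also assert without justification that ``any non-constant disk bubble contributes a non-negative winding''; in fact what is true---and what the correct argument hinges on---is that strict $J$-convexity of $M_B$ together with the boundary point lemma forces $\vartheta \circ \restricted{v}{\p\DD^2}$ to have \emph{strictly positive} derivative for any nonconstant disk with boundary on $L_{\bbf^+_\infty}$, so a nonconstant disk bubble cannot have a critical point in its $\vartheta$-projection. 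The paper exploits this as follows: after reinterpreting the slice condition~\eqref{eq: slice} as three fixed boundary marked points, the bubbling region is disjoint from at least one of the three boundary arcs, so $\vartheta \circ \restricted{v}{\p\DD^2}$ is \emph{not surjective}; it therefore has a critical point, contradicting the boundary point lemma, and $v$ must be constant. That is the argument you are missing, and your substitute does not close the gap.

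Two smaller points: the uniform energy bound follows purely from the fixed class in $\pi_2(\widehat W, L_{\bbf^+})$ imposed in the definition of the moduli space; invoking asphericity here is unnecessary and misattributes its role (it is used only to exclude case~(3)). And your treatment of case~(2) by appeal to Proposition~\ref{prop: bishop disks close to binding} is essentially the paper's, but it requires the observation that once a point of $u_j$ enters $V$, the \emph{entire} disk $u_j$ is a Bishop disk; simply noting that the constant limit value lies in the binding is weaker than what is used in the paper.
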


\begin{proof}
  Since the parameters~$\bbf^+_j$ belong to the closed ball~$\DD^m_r$,
  we can extract a first subsequence from $(\bbf^+_j, u_j, z_j)$ for
  which the parameters converge to some limit $\bbf^+_\infty \in
  \DD^m_r$.
  For simplicity, we still denote this subsequence by $(\bbf^+_j, u_j,
  z_j)$.
  The usual statement of Gromov's compactness theorem for holomorphic
  disks (see \cite[\S 4]{Frauenfelder_disks}) applies to sequences of
  \emph{unparametrized} curves with fixed numbers of interior marked
  points and/or boundary marked points.
  Thus in order to apply the theorem to $(\bbf^+_j, u_j, z_j) \in
  \widetilde \mM\bigl(\widehat W, S_B; J\bigr)$, it will be convenient
  to observe that parametrized curves satisfying \eqref{eq: slice} can
  be identified in a canonical way with unparametrized stable nodal
  $J$-holomorphic disks carrying one interior marked point
  (corresponding to $z_j$) and three extra boundary marked points
  (corresponding to the points $1,i,-1$), where the latter are
  required to satisfy incidence conditions under the evaluation map.
  In this picture, \emph{smooth} (i.e.~non-nodal) unparametrized
  curves with extra boundary marked points correspond to triples
  $(\bbf^+_j,u_j,z_j)$ with $z_j \in \DD^2 \setminus \p\DD^2$, and
  triples with $z_j \in \p\DD^2$ are identified with \emph{nodal}
  curves that consist of a nonconstant disk $u_j$ attached by a node
  at $z_j$ to a single constant (``ghost'') disk on which the interior
  marked point lives.
  With this identification understood, suppose the maps~$u_j$ have
  images bounded away from the binding $B_{\bbf^+_\infty}$.
  Then after taking a subsequence, we can assume by Gromov compactness
  that the corresponding sequence of unparametrized stable curves with
  extra boundary marked points converges in the Gromov topology to a
  smooth or nodal $J$-holomorphic disk.
  Note that each of the unparametrized curves has a unique
  parametrization for which the (ordered) set of boundary marked
  points is $(1,i,-1)$, thus if the Gromov limit is smooth, then this
  means $z_j$ converges to an interior point of $\DD^2$ and $u_j$
  converges in $C^\infty$ to a smooth $J$-holomorphic disk~$u_\infty$.
  Similarly, if the nodal limit consists only of one nonconstant
  $J$-holomorphic disk $u_\infty$ and one ghost disk containing the
  interior marked point, then this means that $u_j$ converges in
  $C^\infty$ to $u_\infty$ while $z_j$ converges to a point
  in~$\p\DD^2$.
  In all other cases, $u_j$ can be viewed as converging to a bubble
  tree which may include both spheres and disks, while $z_j$ converges
  to an interior or boundary point on one of the components.
  Suppose now that the sequence $(u_j, z_j)$ converges to a bubble
  tree $(\mathfrak{t}_\infty, z_\infty)$.
  We will show that $\mathfrak{t}_\infty$ does not contain any
  nonconstant disk bubble.
  (Since the boundary marked points are always mapped to distinct
  points in the image, stability then implies that with the exception
  of the cases interpreted above as smooth limits,
  $\mathfrak{t}_\infty$ contains no disk bubbles at all.)
  Suppose on the contrary that the sequence~$u_j$ bubbles a
  nonconstant disk~$v$ with boundary on the \LOB~$L_{\bbf^+_\infty}$.
  The points $1, -1, i$ divide $\p\DD^2$ into three segments, one of
  which is necessarily disjoint from the bubbling region.
  The fact that for each $j$ the function $\vartheta \circ
  \restricted{u_j}{\p\DD^2}$ is a diffeomorphism then implies that
  $\vartheta \circ \restricted{v}{\p\DD^2}$ is not surjective.
  Thus the boundary of $v$ is somewhere tangent a page of the
  \LOB~$L_{\bbf^+_\infty}$, but it follows from a standard argument
  using the boundary point lemma
  \cite[Theorem~II.1.3]{NiederkrugerHabilitation} that the disk~$v$
  cannot exist.
  We conclude that $\mathfrak{t}_\infty$ is a bubble tree containing
  only holomorphic spheres.
  This is case~(3).
  If there is a sequence $w_j \in \DD^2$ such that $u_j(w_j)$
  approaches the binding, then the maps $u_j$ are Bishop disks for $j$
  large enough because, sooner or later, the images of the $u_j$ will
  intersect the domain~$V$ as in Proposition~\ref{prop: bishop disks
    close to binding} nontrivially.
  This implies that the limit~$u_\infty$ is the constant map at a
  point $p_\infty \in B_{\bbf^+_\infty}$, and we have case~(2).
\end{proof}

Using the natural identification of $\mM\bigl(\widehat W, S_B;
J\bigr)$ with the space of parametrized curves satisfying \eqref{eq:
  slice}, we can now compactify $\mM\bigl(\widehat W, S_B; J\bigr)$ by
adding the limiting configurations described in Proposition~\ref{prop:
  Gromov compactness}.
We will denote this compactified moduli space by $\overline
\mM\bigl(\widehat W, S_B; J\bigr)$.
Its ``boundary''
\begin{equation*}
  \p \overline \mM\bigl(\widehat W, S_B; J\bigr) \subset
  \overline \mM\bigl(\widehat W, S_B; J\bigr)
\end{equation*}
can be defined naturally as the set of equivalence classes
$[(\bbf^+,u,z)]$ for which either $\bbf^+ \in \p \DD^2_r$, $z \in \p
\DD^2$ (including cases where the domain of $u$ contains sphere
bubbles), or $u$ is a constant map into the binding of a \LOB.
The compactification can also be decomposed naturally into two
disjoint pieces,
\begin{equation*}
  \overline\mM\bigl(\widehat W, S_B; J\bigr) =
  \mMsmooth \cup \mMbubble,
\end{equation*}
defined as the subsets consisting of non-nodal and nodal curves
respectively.
We define
\begin{equation*}
  \p \mMsmooth := \p \overline\mM\bigl(\widehat W, S_B; J\bigr)
  \cap \mMsmooth \;.
\end{equation*}
The next proposition describes the topology of the compactified moduli
space.
We refer to \cite[\S 6.5]{McDuffSalamonJHolo} for general facts about
pseudocycles, and \cite{Schwarz_equivalences,ZingerPseudoCycles} for
the fact that pseudocycles up to bordism can be identified with
integral homology classes.

\begin{proposition}\label{prop: topology of compactified moduli space}
  Let $(W,\omega)$ be a symplectic filling of a contact
  $(2n-1)$-manifold $(M,\xi)$.
  Suppose that $(M,\xi)$ has been obtained by a surgery of index $k
  \le n-1$ from another contact manifold, and that $\beltSphere{n+m}$
  is the corresponding belt sphere (with $n = k + m + 1$).
  Deform $W$ as described in Section~\ref{sec: deformed subcritical
    handles} to $(\widehat W, \omega)$, and let $\overline
  \mM\bigl(\widehat W, S_B; J\bigr)$ be the compactification of the
  moduli space $\mM\bigl(\widehat W, S_B; J\bigr)$ of disks attached
  to $S_B \subset  \beltSphere{n+m}$.
  Then $\ev \colon \mM\bigl(\widehat W, S_B; J\bigr) \to \widehat W$ 
  extends to a continuous map
  \begin{equation*}
\overline\ev\colon \left(\overline{\mM}\bigl(\widehat W, S_B; J\bigr),
\p \overline{\mM}\bigl(\widehat W, S_B; J\bigr)\right) \to
\left(\widehat W,\beltSphere{n+m}\right).
\end{equation*}
Moreover:
  \begin{itemize}
  \item [(a)] If $(W,\omega)$ is semipositive, then
    \begin{equation*}
      \restricted{\ev}{\p \mMsmooth} \colon \p \mMsmooth \to \widehat{W}
    \end{equation*}
    defines an $(n+m)$-dimensional pseudocycle in $\widehat{W}$
    representing the homology class $\pm [\beltSphere{n+m}] \in
    H_{n+m}(\widehat{W};\ZZ)$, and
    \begin{equation*}
      \restricted{\ev}{\mMsmooth} \colon \mMsmooth \to \widehat{W}
    \end{equation*}
    defines a bordism of the above pseudocycle to the empty
    $(n+m)$-dimensional pseudocycle in~$\widehat{W}$.
  \item [(b)] If $(W, \omega)$ is symplectically aspherical, then
    $\overline \mM\bigl(\widehat W, S_B; J\bigr)$ is homeomorphic to a
    manifold with boundary and corners of the form
    \begin{equation*}
      \overline \Sigma \times \DD^2 \;,
    \end{equation*}
    where $\overline \Sigma$ is a smooth, compact, connected and
    oriented $(n + m - 1)$-manifold with boundary and corners, whose
    boundary is homeomorphic to $\SS^{n+m-2}$.
    Furthermore,
    \begin{equation*}
      \restricted{\ev}{\p\overline \mM\bigl(\widehat W, S_B; J\bigr)} \colon 
      \p\overline \mM\bigl(\widehat W, S_B; J\bigr) \to \beltSphere{n+m}
    \end{equation*}
    is a map of degree~$\pm 1$.
  \end{itemize}
\end{proposition}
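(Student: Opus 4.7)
The plan is to first extend $\ev$ to $\overline{\ev}$. By Proposition~\ref{prop: Gromov compactness}, every sequence in $\mM\bigl(\widehat W, S_B; J\bigr)$ has a convergent subsequence whose limit is one of the three types described there; in all three cases the evaluation at the marked point converges, so sending $[(\bbf^+,u,z_0)]$ to $u(z_0)$ gives a continuous extension $\overline{\ev}$. Each boundary stratum lands in $\beltSphere{n+m}$: configurations with $z_0\in\p\DD^2$ send $u(z_0)$ to some $L_{\bbf^+}\subset S_B$; the Bishop disks with $\norm{\bbf^+}=r$ lie entirely in $S_A$ by Proposition~\ref{prop: bishop disks on Lobs in MA}; and the constant binding disks map to $B_{\bbf^+}\subset S_B$.

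For part~(a), I invoke the standard pseudocycle machinery. Semipositivity controls sphere bubbling so that nodal configurations in $\mMbubble$ form a subset of (real) codimension at least two in $\mMsmooth$, and likewise for $\p \mMsmooth$. Consequently $\restricted{\ev}{\mMsmooth}$ and $\restricted{\ev}{\p \mMsmooth}$ are pseudocycles of the appropriate dimensions, and the former exhibits a bordism from the latter to the empty pseudocycle. That $\restricted{\ev}{\p \mMsmooth}$ represents $\pm[\beltSphere{n+m}]$ follows from the degree computation below.

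For part~(b), Proposition~\ref{prop: Gromov compactness} eliminates bubbling entirely, so the compactification glues only two extra strata to $\mM$: the Bishop disks at $\norm{\bbf^+}=r$ and the collapsed binding disks. The canonical slice~\eqref{eq: slice} extends the product decomposition $\mM=\Sigma\times\DD^2$ of Proposition~\ref{prop: topology of the uncompactified moduli space} across both strata: the slice condition makes sense for a Bishop disk just as for a smooth interior disk, while a collapsed binding disk pairs naturally with any $z_0\in\DD^2$ (the evaluation $(\bbf^+,p,z_0)\mapsto p$ is independent of $z_0$). This produces $\overline{\mM}=\overline{\Sigma}\times\DD^2$ with $\overline{\Sigma}$ compact and of dimension $n+m-1$. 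To identify $\p \overline{\Sigma}$ with $\SS^{n+m-2}$, I would match the closed Bishop family with the polydisk $\DD^m_r\times\DD^{k+m}_r$ (using $k+m=n-1$) via $(\bbf^+,\x^-,\x^+)$: the face $\norm{\bbf^+}=r$ is the Bishop stratum lying in $M_A$, and the face $\norm{\x^-}^2+\norm{\x^+}^2=r^2$ is the collapsed-binding stratum, whose union is precisely $\p(\DD^m\times\DD^{n-1})\cong\SS^{n+m-2}$. Propositions~\ref{prop: bishop disks close to binding} and~\ref{prop: bishop disks on Lobs in MA} then rule out any further contributions to $\p \overline{\Sigma}$.

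Finally, for the degree~$\pm 1$ claim, I pick a binding point $q\in B_{\bbf^+_0}$ with $\bbf^+_0$ interior to $\DD^m_r$. By Proposition~\ref{prop: bishop disks close to binding} every disk whose image meets a small neighborhood of~$q$ is a Bishop disk, and the explicit Bishop parametrization provides a local diffeomorphism from a neighborhood in $\overline{\mM}$ onto a neighborhood of~$q$ in $\widehat W$ that meets $\beltSphere{n+m}$ transversely; thus a regular value of $\restricted{\overline{\ev}}{\p \overline{\mM}}$ near $q$ has a single preimage, yielding degree~$\pm 1$. The main obstacle is the topological identification of $\overline{\Sigma}$ in part~(b): while the maximum-principle analyses of Section~\ref{sec: bishop disks at the boundary} pin down the local structure near both new strata, careful bookkeeping is needed to see that these strata fit together globally as the boundary of a polydisk, and that the slice-induced product structure extends coherently across the corners where Bishop disks with $\norm{\bbf^+}=r$ collapse into the binding.
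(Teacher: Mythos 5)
Your proposal follows essentially the same path as the paper's argument: Gromov compactness to identify the limiting strata, semipositivity to make sphere bubbling a codimension-two phenomenon in case (a), asphericity to rule it out entirely in case (b), attachment of the binding constants via the explicit Bishop parametrization (the $C=0$ limit), and gluing of the two Bishop boundary faces $\SS^{m-1}\times\DD^{n-1}$ and $\DD^m\times\SS^{n-2}$ to obtain $\p\overline{\Sigma}\cong\SS^{n+m-2}$ — and your closing caveat correctly isolates the bookkeeping the paper carries out with that same parametrization. The one omission is that the proposition also asserts $\overline{\Sigma}$ is \emph{oriented}; this is not automatic from the Fredholm setup with moving totally real boundary condition, and the paper proves it separately in Appendix~\ref{sec: orientability of the moduli space}, a step your write-up should explicitly invoke.
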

\begin{proof}
  Let us describe the natural topology on $\overline \mM\bigl(\widehat
  W, S_B; J\bigr)$, using again the identification of
  $\mM\bigl(\widehat W, S_B; J\bigr)$ with the slice in $\widetilde
  \mM\bigl(\widehat W, S_B; J\bigr)$ defined via the
  conditions~\eqref{eq: slice}.
  The boundary of the uncompactified space $\mM\bigl(\widehat W, S_B;
  J\bigr)$ consists of holomorphic disks whose marked points lie in
  $\p\DD^2$, together with Bishop disks with boundary on a
  \LOB~$L_{\bbf^+}$ with $\bbf^+ \in \p \DD^2_r$.
  Proposition~\ref{prop: Gromov compactness}, provides a description
  of the two additional limit objects we need to consider.
  If $(\bbf^+, p_\infty, z)$ is one of the constant disks appearing in
  case~(2) of Proposition~\ref{prop: Gromov compactness}, then it
  follows from Proposition~\ref{prop: bishop disks close to binding}
  that it is surrounded only by Bishop disks.
  Using the parametrization of the Bishop disks given at the beginning
  of Section~\ref{sec: bishop disks at the boundary}---in this
  description the constant disks are obtained by choosing $C = 0$---we
  can add the constant disks $(\bbf^+, p_\infty, z)$ to the chosen
  slice in $\widetilde \mM\bigl(\widehat W, S_B; J\bigr)$, and give it
  a smooth structure that agrees with the one induced by
  $C^\infty$-convergence of maps.
  Attaching the constant disks in this way corresponds to adding
  boundary points to the global slice.
  Defining a smooth structure on $\overline \mM\bigl(\widehat W, S_B;
  J\bigr)$ in this way, it is straightforward to see that the
  evaluation map extends smoothly to the constant disks in $\overline
  \mM\bigl(\widehat W, S_B; J\bigr)$.
  The other singular points we need to consider in $\overline
  \mM\bigl(\widehat W, S_B; J\bigr)$ are bubble trees, each consisting
  of one holomorphic disk and several holomorphic spheres.
  If $(W, \omega)$ is symplectically aspherical as in case~(b), it
  does not contain any holomorphic spheres, and hence no bubbles can
  appear.
  In this case, the compactified moduli space $\overline
  \mM\bigl(\widehat W, S_B; J\bigr)$ will be diffeomorphic to
  \begin{equation*}
    \overline {\Sigma} \times \DD^2 \;
  \end{equation*}
  according to Propositon~\ref{prop: topology of the uncompactified
    moduli space}, where $\overline{\Sigma}$ is a smooth compact
  manifold with boundary and corners.
  If we are in case~(a), then bubbling of spheres may occur, but
  standard index counting arguments using the semipositivity
  assumption imply that such bubbling is a ``codimension~$2$
  phenomenon''.
  The restriction of $\ev$ to $\p \mMsmooth$ is then a pseudocycle,
  and the restriction to $\mMsmooth$ is a bordism of this pseudocycle
  to the trivial one.
  In the absence of bubbling, $\overline{\mM}\bigl(\widehat W, S_B;
  J\bigr)$ is a trivial disk bundle over the compact base
  manifold~$\overline{\Sigma}$, whose boundary consists of Bishop
  disks sitting on boundary \LOB{}s and/or collapsing into the
  binding.
  The boundary \LOB{}s are parametrized by $\bbf^+ \in \p\DD^m_r \cong
  \SS^{m-1}$, and there is precisely one (unparametrized) Bishop disk
  going through every point of the page of the Legendrian open book of
  $L_{\bbf^+}$, hence we conclude that the first disks can be
  parametrized by $\SS^{m-1} \times \DD^{n-1}$.
  The binding of a \LOB~$L_{\bbf^+}$, on the other hand, is
  diffeomorphic to $\SS^{n-2}$, and since there is a $\DD^m_r$-worth
  of \LOB{}s, we conclude that $\overline{\mM}\bigl(\widehat W, S_B;
  J\bigr)$ contains a family of constant disks that is parametrized by
  $\DD^m_r \times \SS^{n-2}$.
  These two parts meet at their boundaries and form a (topological)
  manifold homeomorphic to $\SS^{n + m - 2}$, as claimed.
  Finally, we observe that the restriction of $\overline{\ev}$ to $\p
  \overline{\mM}\bigl(\widehat W, S_B; J\bigr)$ is always bijective on
  some subset consisting of Bishop disks, so it is a map of degree
  $\pm 1$ onto $\beltSphere{n+m}$ whenever $\p
  \overline{\mM}\bigl(\widehat W, S_B; J\bigr)$ is a topological
  manifold.
  More generally, this implies that the pseudocycle
  $\restricted{\ev}{\p \mMsmooth} \colon \p \mMsmooth \to
  \beltSphere{n+m}$ represents a generator of
  $H_{n+m}(\beltSphere{n+m};\ZZ)$ whenever it is well defined.
  The orientability of $\mM\bigl(\widehat W, S_B; J\bigr)$ is shown in
  Appendix~\ref{sec: orientability of the moduli space}.
\end{proof}

\begin{proof}[Proof of Theorem~\ref{thm: main theorem}]
  The proof of statement~(a) follows directly from part~(a) of
  Proposition~\ref{prop: topology of compactified moduli space}, using
  the natural identification between singular homology classes and
  bordism classes of pseudocycles, see
  \cite{Schwarz_equivalences,ZingerPseudoCycles}.
  Statement~(b) can be obtained by using part~(b) of
  Proposition~\ref{prop: topology of compactified moduli space}.
  Since $\overline \mM\bigl(\widehat W, S_B; J\bigr)$ is diffeomorphic
  to a trivial disk bundle, we can apply Proposition~\ref{prop:
    surgering high manifold times disk} in the general situation, or
  Propositions~\ref{prop: surgering surface times disk} and~\ref{prop:
    surgering 3-manifold times disk} respectively, when $n+m = 3$ or
  $n+m = 4$.
  This implies that $\beltSphere{n+m}$ is the trivial element in the
  oriented bordism group $\Omega^{SO}_{n+m}(W)$, and it is even
  contractible in $W$, if $n+m = 3$ or $4$ as claimed.
\end{proof}

\section{Surgery on moduli spaces}\label{sec: surgery on the moduli
  space}

If bubbling can be ruled out in the proof of Theorem~\ref{thm: main
  theorem} given in the previous section, we can use topological
results to conclude that the belt sphere is not only nullhomologous
but null-bordant in $\Omega_*^{SO}(\widehat W)$, and in cases where
$\dim \mM\bigl(\widehat W, S_B; J\bigr) \le 5$, it is even
null-homotopic.
The idea in both cases is to attach handles to the moduli space and
extend the evaluation map accordingly so that we obtain a new space
together with a map into $\widehat W$ which will be topologically
simpler than the original moduli space.
Our argument for this will make essential use of the fact that the
moduli space is naturally a trivial disk bundle.
Note that if $\Sigma$ is a compact oriented $k$-manifold with
boundary, then using handle attachments to turn $\Sigma \times \DD^2$
into a ball cannot succeed unless $\Sigma \times \DD^2$ admits an
embedding into $\RR^{k+2}$, which cannot always be guaranteed, i.e.~in
general there are topological obstructions to the applicability of
this technique to obtain contractibility of the belt sphere.
We will show that these can be overcome if $\dim \Sigma \le 3$.

\begin{proposition}\label{prop: surgering high manifold times disk}
  Let $W$ be a compact manifold possibly with boundary, and let $S
  \subset W$ be an embedded $(k+1)$-sphere.
  Assume that $\Sigma$ is a compact connected orientable $k$-manifold
  with non-empty boundary.
  Let $X$ be $\Sigma \times \DD^2$, and let
  \begin{equation*}
    f\colon (X, \p X) \to (W,S)
  \end{equation*}
  be a continuous map, whose restriction to the boundary
  \begin{equation*}
    \restricted{f}{\p X}\colon \p X \to S
  \end{equation*}
  is of degree~$1$.
  Then it follows that $S$ is null-bordant in $\Omega^{SO}_{k+1}(W)$.
\end{proposition}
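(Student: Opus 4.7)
The strategy is to identify $[S, i\colon S\hookrightarrow W]$ with $[\partial X, f|_{\partial X}]$ in $\Omega^{SO}_{k+1}(W)$, whereupon the latter class vanishes tautologically because $(X,f)$ provides a null-bordism of it. Since $f|_{\partial X}$ factors as $f|_{\partial X} = i\circ g$ with $g := f|_{\partial X}\colon \partial X\to S$ the given degree-one map, naturality of oriented bordism gives
\[
[\partial X, f|_{\partial X}] \;=\; i_*[\partial X, g] \quad\text{in } \Omega^{SO}_{k+1}(W),
\]
and the left-hand side is zero because $X$ is a compact oriented $(k+2)$-manifold with boundary $\partial X$ mapped to $W$ by $f|_{\partial X}$. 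Hence it suffices to prove the equality
\[
[\partial X, g] \;=\; [S, \mathrm{id}_S] \quad \text{in } \Omega^{SO}_{k+1}(S),
\]
since applying $i_*$ to this equality then yields $[S, i] = 0$ in $\Omega^{SO}_{k+1}(W)$, which is exactly the conclusion of the proposition.

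To establish the equality in $\Omega^{SO}_{k+1}(S) = \Omega^{SO}_{k+1}(S^{k+1})$, I would invoke the standard splitting of oriented bordism of a sphere. Fixing a basepoint $\ast\in S$, the inclusion $\mathrm{pt}\to S^{k+1}$ and the collapse $c\colon S^{k+1}\to \mathrm{pt}$ form a retraction pair and so induce a direct-sum decomposition
\[
\Omega^{SO}_{k+1}(S^{k+1}) \;\cong\; \Omega^{SO}_{k+1}(\mathrm{pt})\,\oplus\, \widetilde{\Omega}^{SO}_{k+1}(S^{k+1}),
\]
in which the reduced summand is isomorphic to $\mathbb{Z}$ and detected by the degree. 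Both classes $[\partial X, g]$ and $[S,\mathrm{id}_S]$ have degree $1$. The projection $c_*$ sends them respectively to $[\partial X]$ and $[S^{k+1}]$ in $\Omega^{SO}_{k+1}(\mathrm{pt})$; the former vanishes because $\partial X$ bounds the compact oriented manifold $X$, while the latter vanishes because $S^{k+1}$ bounds $\DD^{k+2}$. The two classes therefore coincide in both summands, which finishes the argument. Unpacking the resulting equivalence, one obtains a compact oriented cobordism $V$ with $\partial V = \partial X \sqcup (-S)$ and a map $V\to S$ extending $g$ and $\mathrm{id}_S$; gluing $V$ to $X$ along $\partial X$ and composing with $i$ then exhibits the sought-after null-bordism of $(S,i)$ in $W$ explicitly.

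The only non-routine ingredient is the splitting of $\Omega^{SO}_\ast(S^n)$, which is standard---for instance the Atiyah--Hirzebruch spectral sequence for the two-cell complex $S^n$ degenerates, and the reduced summand in the top dimension is detected by the degree. Everything else is naturality of bordism combined with the crucial observation that both $\partial X$ and $S^{k+1}$ happen to bound oriented manifolds of the right dimension, which is precisely where the hypotheses that $X = \Sigma \times \DD^2$ (so that $\partial X$ bounds $X$) and that $S$ is a sphere (so that it bounds $\DD^{k+2}$) enter the argument.
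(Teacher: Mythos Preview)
Your argument is correct and takes a genuinely different route from the paper's proof.

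The paper proceeds constructively: it uses the handle decomposition of $\Sigma$ (and hence of $X = \Sigma \times \DD^2$) to attach additional handles to $X$, first $2$-handles to cancel all $1$-handles, then---by doubling and removing the top cell---further handles of index $3,\dotsc,k$, until one arrives at a compact oriented $(k+2)$-manifold $\widehat X$ with $\partial\widehat X \cong S^{k+1}$. At each stage the map $f$ is extended over the new handles using the connectivity of $S$, and in the end $\widehat f|_{\partial\widehat X}$ is a degree-$1$ map between spheres, hence homotopic to a diffeomorphism, so $\widehat X$ furnishes an explicit oriented null-bordism of $S$ in $W$.

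Your approach bypasses all of this geometry by working directly in $\Omega^{SO}_{k+1}(S^{k+1})$: the splitting $\Omega^{SO}_{k+1}(S^{k+1}) \cong \Omega^{SO}_{k+1}(\mathrm{pt}) \oplus \widetilde\Omega^{SO}_{k+1}(S^{k+1}) \cong \Omega^{SO}_{k+1} \oplus \ZZ$ reduces the comparison of $[\partial X, g]$ and $[S,\mathrm{id}_S]$ to checking two invariants, and both classes vanish in the first summand (since $\partial X$ and $S^{k+1}$ each bound) and agree in the second (both have degree~$1$). This is cleaner and more conceptual. What the paper's hands-on argument buys, however, is that the same surgery template---attaching handles to $X$ and extending $f$---is exactly what is needed for the sharper Propositions that follow, where one must produce not merely a null-bordism but an actual ball bounding $S$; the bordism-group computation does not adapt to that setting.
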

\begin{proof}
  We will assume $k\ge 2$, since for $k\le 1$, $\Sigma$ is either a
  point or a closed interval.
  The manifold~$\Sigma$ has a handle decomposition that consists of a
  $k$-disk with finitely many handles of index~$<k$ attached to it
  (attached successively in order of their indices).
  Since the product of a $k$-dimensional handle with $\DD^2$ is a
  $(k+2)$-dimensional handle of the same index, $X = \Sigma \times
  \DD^2$ is built up by attaching $(k+2)$-dimensional handles of index
  $<k$.
  For each $q=0,\dotsc,k-1$, let $X^{(q)} \subset X$ denote the union
  of all the handles up to index~$q$, so $X = X^{(k-1)}$.
  For every $1$-handle in $X^{(1)}$ we can find a closed curve in the
  boundary of $X^{(1)}$ by pushing the core of the handle into $\p
  X^{(1)}$, and connecting the end points with a path in $\p X^{(1)}$
  that does not intersect any other $1$-handle.
  This is possible, because $X^{(0)}$ consists of a unique $0$-handle
  that in particular is connected.
  These curves intersect the belt sphere of the corresponding handle
  exactly once.
  Moreover since $\dim \p X^{(1)} = k+1$, there is enough space to
  assume that the loops corresponding to different $1$-handles are
  disjoint from each other and also disjoint from any of the attaching
  circles of the $2$-handles needed to obtain $X^{(2)}$.
  The loops thus embed into $\p X^{(2)}$, and we can repeat this
  reasoning to see that they also embed into $\p X^{(3)}$ and so on up
  to $\p X$.
  Standard Morse theory implies that a $q$-handle can be canceled out
  by attaching a $(q+1)$-handle along an embedded $q$-sphere that
  intersects the belt sphere of the $q$-handle exactly once.
  It is thus possible to convert $X^{(1)}$ into a ball by attaching
  $2$-handles, each corresponding to one of the $1$-handles.
  Since the loops also embed into $\p X$, we may equally well attach
  the $2$-handles to $X$, obtaining in this way a compact connected
  orientable $(k+2)$-manifold~$X'$ that admits a handle decomposition
  with exactly one $0$-handle, and no handles of index~$1$ or of
  index~$\ge k$.
  The map $f \colon X \to W$ can be extended to the newly added
  $2$-handles because $\p X$, and hence also the attaching curves, are
  mapped to $S$, which is simply connected.
  We can thus construct a continuous map
  \begin{equation*}
    f'\colon \bigl(X', \p X'\bigr) \to (W,S)  \;.
  \end{equation*}
  The restriction of $f'$ to the boundary is still of degree~$1$,
  because the image of the $2$-handles can be assumed to be a thin set
  in the $(k+1)$-sphere~$S$.
  To finish the proof, consider the double of $X'$, obtained by gluing
  a copy of $X'$ with reversed orientation to itself along its
  boundary.
  One can decompose the double into handles such that for each
  $q$-handle in the original $X'$, there is a corresponding
  $(k+2-q)$-handle in the second copy (think of the two copies of $X'$
  as carrying Morse functions $f$ and $c - f$ for some constant $c \in
  \RR$).
  Our handle decomposition of the double therefore has exactly one
  handle of index $k+2$ and none of index $k+1$.
  Let $\widehat{X}$ denote the result of removing the $(k+2)$-handle.
  Then $\widehat{X}$ is a compact connected orientable
  $(k+2)$-manifold obtained from $X'$ by attaching additional handles
  of various indices $3,\dotsc, k$, and $\p\widehat{X} \cong
  \SS^{k+1}$.
  Since $\pi_q(S) = 0$ for all $q=2,\dotsc,k$, the map~$f'$ extends
  from $X'$ to a continuous map
  \begin{equation*}
    \widehat f\colon
    \bigl(\widehat X, \p \widehat X\bigr) \to (W,S)  \;,
  \end{equation*}
  mapping all of the additional handles into $S$.
  Note in particular that the restriction of $\widehat f$ to $\p
  \widehat X$ is a degree~$1$ map to $S$, because all the handles
  added to $X'$ are of index lower than $\dim S$.
  The restriction $\restricted{\widehat f}{\p \widehat X}$ is
  homotopic to a diffeomorphism between the two $(k+1)$-spheres~$\p
  \widehat X$ and $S$, and it follows that $S$ is null-bordant in
  $\Omega^{SO}_{k+1}(W)$.
\end{proof}

\begin{proposition}\label{prop: surgering surface times disk}
  Let $W$ be a compact manifold possibly with boundary, and let $S
  \subset W$ be an embedded $3$-sphere.
  Assume that $\Sigma$ is a compact connected orientable surface with
  non-empty boundary.
  Let $X$ be $\Sigma \times \DD^2$, and let
  \begin{equation*}
    f\colon (X, \p X) \to (W,S)
  \end{equation*}
  be a continuous map, whose restriction to the boundary
  \begin{equation*}
    \restricted{f}{\p X}\colon \p X \to S
  \end{equation*}
  is of degree~$1$.
  Then it follows that $S$ is contractible in $W$.
\end{proposition}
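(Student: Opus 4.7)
The plan is to adapt the argument of Proposition~\ref{prop: surgering high manifold times disk}, exploiting the fact that a compact connected orientable surface with non-empty boundary admits a handle decomposition with handles only of index $0$ and $1$. This will let us upgrade the conclusion from null-bordism to an honest nullhomotopy.

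I would start by fixing such a handle decomposition of $\Sigma$, consisting of a single $0$-handle and some number $k$ of $1$-handles. Taking the product with $\DD^2$ endows $X$ with a handle decomposition having one $4$-dimensional $0$-handle and $k$ $4$-dimensional $1$-handles. As in the proof of Proposition~\ref{prop: surgering high manifold times disk}, one can attach a canceling $2$-handle to each of these $1$-handles along an embedded loop $\gamma \subset \partial X$ meeting its belt $2$-sphere transversely in one point. Since no handles of higher index are present, the $4$-manifold $X'$ obtained by attaching all $k$ canceling $2$-handles is diffeomorphic to the ball $\DD^4$.

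Next one extends $f$ to a map $f' \colon X' \to W$ satisfying $f'(X' \setminus \mathring X) \subset S$. For each canceling $2$-handle $h = \DD^2 \times \DD^2$, the attaching circle $\gamma$ lies in $\partial X$, so $f$ sends it to a loop in $S$; since $\pi_1(S) = 0$, this loop bounds a disk in $S$, and $f$ extends over the core $\DD^2 \times \{0\}$ into $S$. The pair consisting of $h$ and its attaching region is homotopy equivalent to $(S^2,\mathrm{pt})$, so all remaining obstructions to extending the map into $S$ over $h$ lie in $H^q(S^2,\mathrm{pt};\pi_{q-1}(S))$, which vanishes because $\pi_1(S)=0$. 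This produces the desired extension $f'$; in particular $f'(\partial X') \subset S$.

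The collection of added $2$-handles, mapped by $f'$ into $S$, provides an oriented cobordism between $f|_{\partial X}$ and $f'|_{\partial X'}$ as maps into $S \cong S^3$. Since $\Omega^{SO}_3(S^3) \cong \ZZ$ is detected by degree, the restricted map $f'|_{\partial X'} \colon S^3 \to S$ still has degree $1$ and is therefore homotopic to a homeomorphism. The inclusion $S \hookrightarrow W$ is then homotopic to $f'|_{\partial X'}$, which extends over the $4$-ball $X' \cong \DD^4$ via $f'$ and is hence nullhomotopic. The main subtlety lies in the extension step: it is essential both that $\pi_1(S) = 0$ and that, up to homotopy, a $4$-dimensional $2$-handle is really only a $2$-cell relative to its attaching region---precisely the low-dimensional feature that makes contractibility, rather than just null-bordism, accessible in this case.
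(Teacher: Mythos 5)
Your proof is correct and follows essentially the same route as the paper's: write $\Sigma$ as a $0$-handle plus $1$-handles, cancel the resulting $1$-handles of $X = \Sigma \times \DD^2$ by attaching $2$-handles along dual loops, extend $f$ over these handles using $\pi_1(S) = 0$, observe $X' \cong \DD^4$ with $f'|_{\partial X'}$ still of degree~$1$, and conclude. The paper states this tersely as a special case of Proposition~\ref{prop: surgering high manifold times disk}; your version spells out the extension and degree-preservation steps (note only that the pair $(h, \partial\DD^2 \times \DD^2)$ is homotopy equivalent to $(\DD^2, \SS^1)$ rather than $(\SS^2, \mathrm{pt})$, but the cohomological conclusion you draw is unaffected).
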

\begin{proof}
  The proof is a special case of the argument used for
  Proposition~\ref{prop: surgering high manifold times disk}.
  The difference is that $X$ consists only of a $4$-disk and
  $1$-handles, thus after passing to $X'$ by attaching $2$-handles, it
  already follows that $X'$ is diffeomorphic to a $4$-disk~$\DD^4$.
  We then obtain a continuous map
  \begin{equation*}
    f'\colon \bigl(\DD^4, \p\DD^4\bigr) \to (W,S)
  \end{equation*}
  whose restriction to the boundary is still of degree~$1$, and is
  therefore homotopic to a homeomorphism $\p\DD^4 \to S$.
\end{proof}

\begin{proposition}\label{prop: surgering 3-manifold times disk}
  Let $W$ be a compact manifold, possibly with boundary, and let $S
  \subset W$ be an embedded $4$-sphere.
  Let $\Sigma$ be a compact connected orientable $3$-manifold with
  non-empty boundary $\p\Sigma \cong \SS^2$.
  Assume that $X$ is $\Sigma \times \DD^2$, and that
  \begin{equation*}
    f\colon (X, \p X) \to (W,S)
  \end{equation*}
  is a continuous map, whose restriction to the boundary
  \begin{equation*}
    \restricted{f}{\p X}\colon \p X \to S
  \end{equation*}
  is of degree~$1$.
  Then it follows that $S$ is contractible in $W$.
\end{proposition}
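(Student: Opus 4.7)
The plan is to enlarge $X = \Sigma\times \DD^2$ to a $5$-dimensional ball $\widehat{X}\cong\DD^5$ (with $\p\widehat{X}\cong\SS^4$) by attaching handles, and to extend $f$ over these handles to a map $\widehat{f}\colon\widehat{X}\to W$ whose restriction to $\p\widehat{X}$ still has degree one onto $S$. Once this is achieved, contractibility of $\widehat{X}$ forces $\widehat{f}|_{\p\widehat{X}}\colon\SS^4\to W$ to be nullhomotopic; since a degree-one map between $4$-spheres is a homotopy equivalence, this translates into a nullhomotopy of the inclusion $S\hookrightarrow W$, which is precisely the contractibility of $S$ in $W$.

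The construction proceeds in two phases. In the first phase I use the hypothesis $\p\Sigma\cong\SS^2$ to cap off: I attach a $5$-dimensional $3$-handle $\DD^3\times\DD^2$ to $X$ along $\p\Sigma\times\DD^2\subset\p X$, obtaining $X_1 := \widehat{\Sigma}\times\DD^2$, where $\widehat{\Sigma} = \Sigma\cup_{\SS^2}\DD^3$ is the closed $3$-manifold obtained by capping off. The attaching sphere $\SS^2$ maps into $S$ and bounds there because $\pi_2(S)=0$, so $f$ extends; the freedom in $\pi_4(S)\cong\ZZ$ allows me to arrange that the new boundary map $\widehat{\Sigma}\times\SS^1\to S$ still has degree one. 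In the second phase I fix a Heegaard-type handle decomposition of the closed orientable $3$-manifold $\widehat{\Sigma}$ with one $0$-handle, $g$ one-handles, $g$ two-handles, and one $3$-handle; taking the product with $\DD^2$ induces a $5$-dimensional handle decomposition of $X_1$ with handles of the same indices. I then successively attach canceling handles to $X_1$: $g$ $2$-handles canceling the $1$-handles, $g$ $3$-handles canceling the $2$-handles, and one $4$-handle canceling the $3$-handle. The attaching spheres of these new handles have dimensions $1$, $2$, and $3$ respectively; they all map into $S$ and bound there because $\pi_1(S)=\pi_2(S)=\pi_3(S)=0$, and at each step an adjustment in $\pi_4(S)\cong\ZZ$ preserves the degree-one condition on the evolving boundary map. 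After all cancellations, the handle decomposition of $\widehat{X}$ consists only of the $0$-handle, so $\widehat{X}\cong\DD^5$ and $\p\widehat{X}\cong\SS^4$ as needed.

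The hardest part is to verify that each of these handle cancellations can actually be realized geometrically, i.e.\ to find, at every stage, an embedded attaching sphere of the correct dimension in the current $4$-dimensional boundary which meets the belt sphere of the handle to be cancelled transversely in exactly one point. Cancelling the $1$-handles presents no difficulty. For cancelling the $2$- and $3$-handles, the boundary becomes simply connected once the $1$-handles have been cancelled, so the (topological) Whitney trick applies in this simply-connected $4$-manifold and, together with standard handle-calculus arguments, produces embedded $2$- and $3$-spheres realizing the required algebraic intersection numbers $\pm 1$ with the belt spheres of the $2$- and $3$-handles.
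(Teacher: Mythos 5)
Your overall plan — build up $\Sigma\times\DD^2$ to a $5$-ball by attaching handles, extending $f$ into $S$ at each step, and then deduce contractibility of $S$ — is the same as the paper's. Your initial capping-off step (attaching a $3$-handle to produce $\widehat\Sigma\times\DD^2$) is a detour that the paper avoids by working directly with the handle decomposition of the bounded $3$-manifold $\Sigma$, but it is not where the trouble lies.

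The trouble is that you never address the \emph{framing} of the new $2$-handles you attach when cancelling the $1$-handles, and this is exactly the crux of the paper's argument. After the $1$-handles are cancelled, the $5$-manifold is a boundary connected sum of pieces, each a $5$-ball with one $2$-handle attached along an unknotted $\SS^1\subset\SS^4$; the only remaining invariant of each piece is the framing, a class in $\pi_1(\SO(3))\cong\ZZ_2$. With the wrong framing the piece is $\SS^2\,\tilde\times\,\DD^3$, whose boundary is $\CC P^2\conSum\overline{\CC P^2}$, and there the belt sphere of the $2$-handle, which has self-intersection $0$, admits \emph{no} homology class with self-intersection $0$ and algebraic intersection $\pm 1$ with it — so the $2$-handle simply cannot be cancelled by a $3$-handle. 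Your appeal to the topological Whitney trick in the simply-connected $4$-dimensional boundary is therefore beside the point: the Whitney trick removes excess geometric intersections but cannot conjure up a dual class that the intersection form does not contain (and, as a side remark, the topological Whitney trick in dimension four is Freedman's theorem, which yields topological rather than smooth embeddings and is far more machinery than is needed). The paper avoids all of this through Lemma~\ref{lemma: attaching 2-handles with trivial tangent bundle}, which shows how to choose the framings of the cancelling $2$-handles so that the resulting manifold stays parallelizable; parallelizability then forces all pieces to be $\SS^2\times\DD^3$, where the cancelling $2$-spheres are visible by hand. Without something playing the role of that lemma, your argument has a genuine gap at the $2$-handle cancellation step.
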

\begin{proof}
  The manifold~$\Sigma$ is an orientable $3$-manifold minus a ball.
  It admits a handle decomposition given by a $3$-ball with $1$- and
  $2$-handles attached, and we may assume that $\Sigma$ has been
  obtained by attaching first the $1$-handles and then the
  $2$-handles.
  As in the proof of Proposition~\ref{prop: surgering high manifold
    times disk}, it follows that the manifold~$X$ is built up by first
  attaching $1$- and then $2$-handles to a $5$-ball~$\DD^5$.
  Denote by $X^{(1)}$ the intermediate space consisting only of the
  $5$-ball and the $1$-handles.
  It is easy to find for every $1$-handle an embedded loop in $\p
  X^{(1)}$ that intersects the belt sphere of the handle exactly once.
  For dimensional reasons, these loops will be generically disjoint
  from each other, but they will also generically not intersect any of
  the attaching circles of the $2$-handles.
  We can cancel all $1$-handles of $X^{(1)}$ by attaching $2$-handles
  along the chosen loops.
  The chosen loops also embed into $\p X$, hence we can also kill all
  $1$-handles by attaching $2$-handles to $X$.
  Note that we could get rid of the $1$-handles without choosing a
  particular framing when attaching the $2$-handles; however $X$ is
  parallelizable (as is any oriented $3$-manifold), and using
  Lemma~\ref{lemma: attaching 2-handles with trivial tangent bundle}
  below, we attach the $2$-handles in such a way that the resulting
  manifold is also parallelizable.
  Since the image~$f(\p X)$ lies in the sphere~$S$, we can extend $f$
  to the additional $2$-handles without changing the degree of
  $\restricted{f}{\p X}$.
  After the previous step, we will assume that $X$ is a $5$-manifold
  with trivial tangent bundle that has been obtained by gluing
  $2$-handles to the $5$-ball.
  Every embedding of $\SS^1$ into $\SS^4 = \p\DD^5$ is isotopic to a
  standard one for dimensional reasons (see
  \cite{HaefligerHighDimensionalKnots}), and it follows that the
  $2$-handles are all attached along unknots.
  Note also that these unknots are unlinked since we may shrink the
  first unknot into an arbitrarily small ball, so that the other loops
  will bound embedded disks that are disjoint from this ball.
  We may therefore assume that $X$ is the boundary sum of a finite
  collection of $5$-manifolds, each consisting of a $5$-ball with a
  single $2$-handle attached along an unknot.
  The only invariant of each such manifold is the framing of the
  $2$-handle.
  It is given by a loop in $\SO(3)$, which means there are only two
  choices, corresponding to the two elements of $\pi_1(\SO(3))$.
  In fact, each of these manifolds is diffeomorphic to either the
  trivial rank~$3$ bundle over $\SS^2$ or the twisted one, $\SS^2
  \tilde\times \DD^3$.
  The total space of the twisted one is not parallelizable:
  it suffices to study $\restricted{T \bigl(\SS^2 \tilde\times
    \DD^3\bigr)}{\SS^2 \tilde\times \{\0\}}$ which is obtained by
  clutching two copies of $\CC\oplus \RR^3$ over two disks
  together.
  The gluing map is $e^{2i\phi} \oplus \psi$, where $\psi$ is the
  nontrivial loop in $\pi_1(\SO(3))$, but since this is the nontrivial
  element of $\pi_1(\SO(5))$, the bundle is not trivial.
  It follows that $X$ is the boundary connected sum of copies of
  $\SS^2 \times \DD^3$.
  We can then also kill the $2$-handles by attaching $3$-handles, and
  the map~$f$ extends to this new manifold.
  This proves that $S$ is homotopically trivial in the filling.
\end{proof}

The following lemma was used above in the proof of
Proposition~\ref{prop: surgering 3-manifold times disk}.

\begin{lemma}\label{lemma: attaching 2-handles with trivial tangent
    bundle}
  Let $X$ be a compact parallelizable $n$-manifold with boundary, and
  let $\gamma$ be an embedded loop in $\p X$.
  Assume $n\ge 5$.
  Then one can choose a framing of $\gamma$ such that the manifold
  \begin{equation*}
    X \cup_\gamma H_2
  \end{equation*}
  obtained by attaching a $2$-handle~$H_2$ along $\gamma$ is also
  parallelizable.
\end{lemma}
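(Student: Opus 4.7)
My plan is to reduce the question to computing a single obstruction class in $\pi_1(\SO(n))$ and to use the stabilization of the rotation groups to show that the choice of framing provides enough flexibility to kill it.

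First, since $X$ is parallelizable and the handle $H_2 = \DD^2 \times \DD^{n-2}$ is contractible, the new manifold $X' := X \cup_\gamma H_2$ deformation retracts to $X \cup_\gamma \DD^2$, where $\DD^2$ is the core of the handle. A trivialization of $TX$ therefore extends to a trivialization of $TX'$ if and only if it extends across this single $2$-cell. By standard obstruction theory, the obstruction to such an extension is a single class $\omega \in \pi_1(\SO(n))$; concretely, $\omega$ is the homotopy class of the transition loop $\SS^1 \to \SO(n)$ comparing the chosen trivialization of $TX$ along~$\gamma$ with the canonical trivialization of $TH_2$ on the attaching circle. For $n \ge 3$ this group is $\ZZ/2$.

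Next, I would verify that modifying the framing of $\gamma$ by an element $\alpha \in \pi_1(\SO(n-2))$ changes $\omega$ by the image of $\alpha$ under the block inclusion $\SO(n-2) \hookrightarrow \SO(n)$ acting on the normal directions to~$\gamma$. This follows directly from the definition of handle attachment: the framings of $\gamma$ form a torsor over $\pi_1(\SO(n-2))$ (trivializations of the rank $(n-2)$ normal bundle of $\gamma$ in $\p X$), and an alteration by $\alpha$ postcomposes the transition loop defining $\omega$ by the loop $\alpha$ viewed inside $\SO(n)$ via the block embedding.

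Finally, for $n \ge 5$ we have $n - 2 \ge 3$, so both $\pi_1(\SO(n-2))$ and $\pi_1(\SO(n))$ are $\ZZ/2$, and the stabilization map $\pi_1(\SO(n-2)) \to \pi_1(\SO(n))$ induced by the block inclusion is an isomorphism. Hence the action of framing changes on~$\omega$ is surjective, and we may choose a framing for which $\omega$ vanishes, yielding a parallelization of~$X'$. The only delicate point in this plan is the explicit identification of the action of framing changes on $\omega$ with the stabilization homomorphism; this is a routine but careful bookkeeping exercise, and the dimensional assumption $n \ge 5$ is precisely what guarantees we are in the stable range so that a single toggle suffices.
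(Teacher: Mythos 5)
Your proof is correct and is essentially the same argument as the paper's, phrased in the language of obstruction theory rather than by explicit frame manipulation: both reduce to the fact that for $n \ge 5$ the stabilization $\pi_1(\SO(n-2)) \to \pi_1(\SO(n))$ is an isomorphism, and that changing the framing of $\gamma$ alters the obstruction by exactly this stabilization map. The ``routine but careful bookkeeping'' you defer is precisely what the paper carries out, by writing down explicit extension vectors $f_1, f_2$ along $\gamma$ and exhibiting the two candidate frames.
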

\begin{proof}
  A framing of $\gamma$ is an oriented trivialization of the normal
  bundle~$\nu(\gamma)$ of $\gamma$ in $\p X$.
  Given one framing, any other one can be obtained by multiplying the
  first one in each fiber with a matrix in $\GL^+(n-2)$, i.e. the
  second framing can be represented with respect to the first one by a
  map $\SS^1 \to \GL^+(n-2)$.
  We are only interested in framings up to homotopy, hence it follows
  that all framings are classified by $\pi_1\bigl(\GL^+(n-2)\bigr)$,
  and since $\GL^+(n-2) \simeq \SO(n-2)$, there are only two possible
  choices.
  Choose now a trivialization of $TX$.
  Such a trivialization allows us to identify
  $\restricted{TX}{\gamma}$ with $\SS^1 \times \RR^n$.
  Any other trivialization of $\restricted{TX}{\gamma}$ can be
  represented with respect to the first one by a map $\SS^1 \to
  \GL^+(n)$, that is, up to homotopy there are also only two
  trivializations of $\restricted{TX}{\gamma}$, corresponding to the
  elements of $\pi_1(\SO(n))$.
  In particular, any framing $(e_1, \dotsc, e_{n-2})$ of $\nu(\gamma)$
  extends to a basis $(f_1, f_2, e_1, \dotsc , e_{n-2})$ of
  $\restricted{TX}{\gamma}$, where the vector fields $f_1$ and $f_2$
  are given by
  \begin{align*}
    f_1 &= \vec n\, \cos\phi - \dot \gamma\, \sin\phi  \\
    f_2 &= \vec n\, \sin\phi + \dot \gamma\, \cos\phi \; ,
  \end{align*}
  where $\phi$ parametrizes $\gamma$.
  Here $\vec n$ denotes the outward normal vector field to the
  boundary~$\p X$, and $\dot \gamma$ is the tangent vector field to
  the loop~$\gamma$.
  If this basis is not homotopic to the given trivialization of
  $\restricted{TX}{\gamma}$, it suffices to choose instead
  \begin{equation*}
    \bigl(f_1, f_2, e_1 \cos\phi - e_2 \sin\phi,
    e_1 \sin\phi + e_2 \cos\phi,  e_3, \dotsc, e_{n-2}\bigr) \;,
  \end{equation*}
  which corresponds to the second framing of $\gamma$, but also to the
  other homotopy class of possible trivializations of
  $\restricted{TX}{\gamma}$.
  It is thus possible to homotope the trivialization of $TX$ into one
  that coincides close to $\gamma$ with $(f_1, f_2, e_1, \dotsc,
  e_{n-2})$, where $(e_1,\dotsc,e_{n-2})$ is a framing of $\gamma$.
  On $H_2 = \DD^2 \times \DD^{n-2}$ with coordinates $(x,y;\z) \in
  \DD^2 \times \DD^{n-2}$, the attaching circle $\{x^2 + y^2 = 1,\, \z
  = \0\}$ has the obvious framing
  $\bigl(\partial_u, \partial_v, \partial_{z_1},
  \dotsc, \partial_{z_{n-2}}\bigr)$.
  If we glue $H_2$ to $X$ with the chosen framing, then the
  trivialization $(f_1, f_2, e_1, \dotsc, e_{n-2})$ extends to
  $\bigl(\partial_u, \partial_v, \partial_{z_1},
  \dotsc, \partial_{z_{n-2}}\bigr)$, so the manifold $X\cup_\gamma
  H_2$ has trivial tangent bundle, as desired.
\end{proof}

\section{Contact structures that are not contact connected 
sums}
\label{sec: almost contact surgery not genuine surgery}

In this section we prove Theorem~\ref{thm: almost contact connSum not
  genuine}.
The construction we are going to use is inspired by a similar one in
\cite{BowdenCrowleyStipsicz2}, though we do not need the full strength
of that paper.
Let $M$ be a closed $(2n-1)$-dimensional manifold that admits an
almost contact structure and that has a handle decomposition with a
single handle of index~$0$, a single one of index~$2n-1$, and
otherwise only handles of indices~$n-1$ and $n$.
We assume also that $M$ is not a homotopy sphere, which by the
Hurewicz theorem implies that it must have nontrivial homology in
dimension $n-1$ or~$n$.
Possible examples include the unit cotangent bundle of $\SS^n$, and
$\SS^{n-1} \times \SS^n$; the first carries a canonical contact
structure, and the second is easily seen to be almost contact since it
is stably parallelizable.
Remove a small open disk $D$ from $M$ and denote the resulting
manifold by $M^*$.
The product manifold~$W = M^* \times [-1,1]$ is compact and has
boundary and corners, and after smoothing, its boundary
\begin{equation*}
  \p W = M^*\times \{-1\}
  \cup \bigl(\p M^*\times [-1,1]\bigr)  \cup M^*\times \{+1\}
\end{equation*}
is diffeomorphic to $M \conSum (-M)$.
Now we can proceed with the proof of Theorem~\ref{thm: almost contact
  connSum not genuine}.

\begin{proof}[Proof of Theorem~\ref{thm: almost contact connSum not
    genuine}]
  By assumption, the manifold~$M^*$ admits a Morse function with
  outward pointing gradient at the boundary and critical points of
  index at most~$n$, and the same is therefore true of~$W$.
  Moreover, any almost contact structure $\Xi$ on $M$ induces an
  almost complex structure on $W$, thus by a well-known theorem of
  Eliashberg \cite{Eliashberg_Stein}, there is a Stein structure whose
  complex structure is homotopic to the given almost complex
  structure.
  The boundary $\p W \cong M \conSum (-M)$ inherits from this Stein
  structure a contact structure~$\xi$ which is homotopic to the almost
  contact structure $\Xi \conSum \overline{\Xi}$.
  Note that the belt sphere of the connected sum (i.e.~the center of
  the ``neck'' in $M \conSum (-M)$) is
  \begin{equation*}
    S := \p M^* \times \{0\} \subset \p W \;.
  \end{equation*}
  Arguing by contradiction, suppose now that $\xi_1$ and $\xi_2$ are
  positive contact structures on $M$ and $-M$ respectively such that
  $\xi_1 \conSum \xi_2$ is isotopic to~$\xi$.
  Then after a deformation of the Stein structure near $\p W$ and
  hence an isotopy of~$\xi$, we can assume $\xi$ in a neighborhood of
  $S$ is contactomorphic to the contact structure on a neighborhood of
  the belt sphere of an index one Weinstein handle.
  According to Proposition~\ref{prop: topology of compactified moduli
    space} there is a compact $(2n-1)$-dimensional moduli space
  $\overline{\mM}\cong \overline{\Sigma} \times \DD^2$ and an
  evaluation map $\ev \colon (\overline{\mM}, \p \overline{\mM}) \to
  (W, S)$ such that
  \begin{enumerate}
  \item $\restricted{\ev}{\p \overline{\mathcal M}} \colon \p
    \overline{\mM} \to S$ has degree one, and
  \item $\overline \Sigma$ is a compact orientable $(2n-3)$-manifold
    with non-empty boundary.
  \end{enumerate}
  Moreover, $\ev$ is a diffeomorphism on some open subset.
  Consider the projection $p \colon W = M^* \times [-1,1] \to M^*$,
  which maps $S$ to $\p M^*$, and denote by $f \colon \overline{\mM}
  \to M^*$ the composition $f = p \circ \ev$.
  It is easy to check that $f \colon \overline{\mM} \to M^*$ now
  satisfies the conditions of Lemma~\ref{lemma: map of product
    manifold induces contractibility}.
  This implies that $M^*$ has vanishing homology in positive degrees,
  and is thus a contradiction.
\end{proof}

On the other hand, note that there is no homotopical obstruction to
decomposing $\bigl(M \conSum (-M), \xi\bigr)$, because $\xi$ is, by
construction, homotopic to $\Xi \conSum \overline{\Xi}$.

\begin{remark}
  A similar argument can be used to find examples of Stein fillable
  contact structures that are homotopic (through almost contact
  structures) but not isotopic to contact structures obtained via
  subcritical surgery of arbitrary index $k=1,\dotsc,n-1$.
  The above is the $k=1$ case of this result.
\end{remark}

\section{The Weinstein conjecture for subcritical surgeries}
\label{sec:Weinstein}

We will now prove Theorem~\ref{thm:Weinstein}, the existence of
contractible Reeb orbits for certain contact manifolds $(M',\xi')$
obtained by subcritical surgery.
Under either of the first two conditions stated in the theorem, the
proof is a trivial modification of the proof of Theorem~\ref{thm: main
  theorem}, following \cite{HoferWeinstein}.
Suppose $\alpha$ is the contact form for which we'd like to find a
contractible Reeb orbit, and let $\alpha'$ denote a second contact
form that matches the one given in our Weinstein surgery model near
the belt sphere $\beltSphere{2n-k-1}$.
After rescaling $\alpha$, we can find an exact symplectic structure on
$\RR \times M'$ that matches $d(e^t \alpha)$ on $(-\infty,-1] \times
M'$ and $d(e^t \alpha')$ on $[-1/2,\infty) \times M'$.
We then choose a compatible almost complex structure and, as in
Theorem~\ref{thm: main theorem}, study the moduli space of holomorphic
disks in $\RR \times M'$ with boundary in the \LOB{}s obtained by
deformation from $\{0\} \times \beltSphere{2n-k-1} \subset \RR \times
M'$.
If $\alpha$ admits no contractible Reeb orbits, then bubbling is
impossible, so the proof of Theorem~\ref{thm: main theorem} shows that
$\beltSphere{2n-k-1}$ will be null-bordant in $\RR \times M'$, and
thus also in~$M'$.
If $n=3$, or $n=4$ with $k=3$, it also shows that $\beltSphere{2n-k-1}$
is trivial in $\pi_{2n-k-1}(\RR \times M') = \pi_{2n-k-1}(M')$.
It remains to handle the third condition in
Theorem~\ref{thm:Weinstein}, which specifically concerns contact
connected sums in dimension five.
The above argument shows that in this situation, if there is no
contractible Reeb orbit, then the belt sphere must be nullhomotopic.
But the following theorem of Ruberman \cite{Ruberman} says that this
can only happen in the cases excluded by the third condition.

\begin{theorem}[Ruberman]\label{thm: con_sum contractible belt sphere}
  Let $M$ be a closed oriented manifold, and suppose $S$ is an
  embedded codimension~$1$ sphere that is nullhomotopic.
  Then either $S$ is the boundary of a homotopy-ball embedded in $M$,
  or $M$ is the connected sum
  \begin{equation*}
    M = N_0 \conSum N_1
  \end{equation*}
  of two rational homology spheres~$N_0$ and $N_1$, one of which is
  simply connected, while the other has finite fundamental group.
\end{theorem}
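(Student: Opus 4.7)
The plan is to combine an elementary separation/connected-sum decomposition coming from $S$ with a covering-space analysis, in the spirit of the arguments used in the proof of the sphere theorem and of Kneser's theorem.

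First I would observe that because $S$ is nullhomotopic it is in particular nullhomologous, and since $M$ is orientable, an embedded closed codimension-one submanifold is nullhomologous if and only if it separates. Write $M = M_0 \cup_S M_1$ and let $N_i = M_i \cup_S D^n$, so that $M \cong N_0 \conSum N_1$ with $\pi_1(M) \cong \pi_1(N_0) * \pi_1(N_1)$ by van Kampen. If one of the $M_i$ is already a homotopy ball, we are in the first case of the conclusion, so from now on I assume neither is.

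Next I would exploit the nullhomotopy to lift the inclusion $S \hookrightarrow M$ to the universal cover $\pi \colon \widetilde M \to M$. In dimension $n-1 \ge 2$ the sphere $S$ is simply connected, so each connected component of $\pi^{-1}(S)$ is a diffeomorphic copy of $S$, and the lifts are embedded and pairwise disjoint. The Bass-Serre tree of the free product $\pi_1(N_0) * \pi_1(N_1)$ organizes $\widetilde M$ as a tree-like union of copies of the universal covers $\widetilde N_0^*$ and $\widetilde N_1^*$ (with open balls removed) glued along lifts of $S$. A chosen lift $\widetilde S$ separates $\widetilde M$ into two open pieces, each of which is a union of such blocks indexed by a subtree.

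Now I would run a cut-and-paste argument: picking an extension of the nullhomotopy to a singular disk $D^n \to \widetilde M$ bounding $\widetilde S$, transversality to the union of all lifts of $S$ yields a nested configuration of sub-disks inside $D^n$, each bounding a lift of $S$. An innermost disk argument forces one of the blocks bounded by $\widetilde S$ to be compact; by the tree structure this forces all but finitely many of the lifts of $S$ (in fact all but one entire tree-orbit) to bound compact blocks, which is only compatible with $\pi_1(N_0) * \pi_1(N_1)$ acting with one factor trivial and the other factor acting with a compact fundamental domain on a half-tree. Translating this back downstairs gives precisely that one of $N_0$, $N_1$ is simply connected and the other has finite fundamental group.

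The hard step is the last paragraph: extracting from the combinatorics of the Bass-Serre tree together with the innermost-disk argument the rigid conclusion that both $N_i$ must be rational homology spheres and that the fundamental groups split the way they do. For this I would use a transfer (averaging) argument on the finite-sheeted cover of $M$ corresponding to the finite factor $\pi_1(N_i)$: any rational homology of the non-simply-connected summand would lift to a transfer class in $M$ that is nonzero in $H_*(M;\QQ)$, which combined with the connected-sum decomposition would contradict the existence of an embedded nullhomotopic representative that is not the boundary of a homotopy-ball. Making this transfer-versus-connected-sum incompatibility precise is the principal technical content of Ruberman's argument.
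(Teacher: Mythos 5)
First, a meta-remark: the paper does not prove this theorem; it is stated as a result of Ruberman and cited as \cite{Ruberman}, so there is no in-paper proof to compare your sketch against. Your outline can therefore only be assessed against the published argument, whose overall shape you have captured correctly in the early steps: the separation and connected-sum decomposition, the passage to the universal cover, and the organization of $\widetilde M$ by the Bass-Serre tree of $\pi_1(N_0) * \pi_1(N_1)$ are all genuine ingredients.

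There are, however, two places where your sketch does not quite say what needs to be said. First, the ``innermost disk'' step is not the right mechanism as stated: if $\widetilde f\colon D^n\to\widetilde M$ is the lifted nullhomotopy made transverse to the lifts of $S$, the preimages are arbitrary closed hypersurfaces in the interior of $D^n$, not sub-disks, and they do not individually ``bound a lift of $S$''. What actually works is a local-degree argument: since $\widetilde f(\p D^n)\subset\widetilde S$, the local degree of $\widetilde f$ is constant on each of the two components of $\widetilde M\setminus\widetilde S$, the two constants differ by the degree $\pm 1$ of $\widetilde f\vert_{\p D^n}$, and the side on which the constant is nonzero must lie inside the compact set $\widetilde f(D^n)$ and is therefore compact. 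Combined with the tree combinatorics (a half-tree at an edge of the Bass--Serre tree is finite only if one free factor is trivial, and the blocks on that side are compact only if the relevant factors are finite) this is exactly what forces one summand to be simply connected and the other to have finite fundamental group. Second, the transfer argument you propose at the end does not by itself produce the rational-homology-sphere conclusion: there is no contradiction between $H_*(M;\QQ)$ being nontrivial in middle degrees and $S$ being nullhomotopic in general (a sphere bounding a ball in $\CC P^n$ is already a counterexample to such a principle). The correct mechanism is again the degree count: collapsing the non-compact (or the other compact) side of $\widetilde M\setminus\widetilde S$ and capping $\widetilde S$ by the nullhomotopy disk produces a map $\SS^n\to\widehat N$ of nonzero degree onto the closed manifold $\widehat N$ obtained by filling in the compact side. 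A nonzero-degree map from $\SS^n$ forces $\widehat N$ to be a rational homology sphere, which in turn forces each connect summand to be one; and when the degree is $\pm 1$ (the case where the other local degree vanishes) it forces $\widehat N$ to be an \emph{integral} homology sphere, which, being simply connected, is a homotopy sphere --- this is precisely the ``$S$ bounds a homotopy ball'' alternative. So the dichotomy in the statement is governed by whether one or both of the two local degrees is nonzero, not by a transfer computation.
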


The proof of Theorem~\ref{thm:Weinstein} is thus complete.

\appendix

\section{Orientability of the moduli spaces}
\label{sec: orientability of the moduli space}

In this appendix we prove that the moduli spaces used in this paper
are orientable.
Let us fix some notation which will be used in the proof.
Fix a real number $p > 2$.
Let $\bB$ denote the space of pairs $(\bbf^+, u)$ where:
\begin{itemize}
\item $\bbf^+ \in \DD^m_r$ with $\norm{\bbf^+} <r$;
\item $u \colon \DD^2 \to W$ is a map of class $W^{1,p}$ such that
  $u(\p \DD^2) \subset L_{\bbf^+} \setminus B_{\bbf^+}$ where
  $L_{\bbf^+}$ denotes the \LOB indexed by $\bbf^+$ and $B_{\bbf^+}$
  its binding; and
\item $\vartheta \circ \restricted{u}{\p \DD^2}$ has degree one, where
  $\vartheta\colon L_{\bbf^+} \setminus B_{\bbf^+} \to \SS^1$ is the
  fibration of the \LOB.
\end{itemize}
Of course the information about $\bbf^+$ is already contained in $u$,
and $\bbf^+$ only serves for book-keeping.
We denote by $\bB_{\bbf^+}$ the fibers of the projection $\mathfrak{p}
\colon \bB \to \DD^m_r$, i.e. $\mathfrak{p}(\bbf^+, u) = \bbf^+$.
Then $\bB_{\bbf^+}$ consists of the maps $u \in \bB$ such that $u(\p
\DD^2) \subset L_{\bbf^+} \setminus B_{\bbf^+}$.
The linearized Cauchy-Riemann operator at $(\bbf^+, u) \in \bB$ (or to
be more precise, the \emph{vertical differential} of the nonlinear
Cauchy-Riemann operator, as defined in \cite{McDuffSalamonJHolo}),
will be denoted $\widetilde D_{(\bbf^+, u)}$.
Recall that this depends on a choice of connection on~$W$, though it
is independent of this choice whenever $u$ is $J$-holomorphic.
We define 
\begin{equation*}
  \det \widetilde D_{(\bbf^+, u)} = \Lambda^{top} \ker \widetilde D_{(\bbf^+, u)}
  \otimes \Lambda^{top} (\mathrm{coker} \, \widetilde D_{(\bbf^+, u)})^* \;.
\end{equation*}
The \defin{determinant bundle} $\dD \to \bB$ is the real rank-one 
bundle whose fiber at $(\bbf^+, u)$ is $\det \widetilde D_{(\bbf^+, u)}$.
In order to prove that the moduli space $\widetilde \mMint(\widehat W,
S_B ; J)$ is orientable, it suffices to show that $\dD \to \bB$ is
trivial.
To better understand the determinant bundle we take a closer look at
the linearized Cauchy-Riemann operator.
The tangent space $T_{(\bbf^+, u)}\bB$ consists of sections $\xi \in
W^{1,p}(u^*T \widehat W)$ such that, for all $z \in \p \DD^2$, they
satisfy $\xi(z) \in T_{u(z)}\beltSphere{n+m}$ and moreover the
projection of $\xi(z)$ to $T_{\bbf^+} \DD^m_r$ is independent of $z
\in \p\DD^2$.
It contains the subspace $T_u \bB_{\bbf^+} \subset T_{(\bbf^+, u)}\bB$
which is defined as
\begin{equation*}
  T_u \bB_{\bbf^+} = \bigl\{ \xi \in W^{1,p}(u^*T\widehat W) \bigm|\,
  \xi(z) \in T_{u(z)} L_{\bbf^+} \, \forall z \in \p \DD^2 \bigr\}
\end{equation*}
and therefore we can identify
\begin{equation}\label{eq: decomposition of TB}
  T_{(\bbf^+, u)}\bB \cong T_u \bB_{\bbf^+} \oplus T_{\bbf^+}\DD^m_r \;.
\end{equation}
The tangent spaces $T_u \bB_{\bbf^+}$ are the fibers of a vector
bundles over $\bB$ which we will denote $T^{\mathrm{vert}}\bB$ and the
decomposition \eqref{eq: decomposition of TB} globalizes to a bundle
isomorphism
\begin{equation*}
  T\bB \cong T^{\mathrm{vert}}\bB \oplus \mathfrak{p}^* T \DD^m_r \;.
\end{equation*}
Although the above isomorphism is not canonical, its homotopy class
is.
We denote the restriction of $\widetilde D_{(\bbf^+, u)}$ to $T_u
\bB_{\bbf^+}$ by $D_{(\bbf^+, u)}$.
If we write the elements of $T_{(\bbf^+, u)}\bB$ as pairs $(\xi, v)
\in T_u \bB_{\bbf^+} \oplus T_{\bbf^+}\DD^m_r$ using the
identification in Equation~\eqref{eq: decomposition of TB}, we can
decompose $\widetilde D_{(\bbf^+, u)}$ as
\begin{equation*}
  \widetilde D_{(\bbf^+, u)}(\xi, v) = D_{(\bbf^+, u)}(\xi) + K_{(\bbf^+, u)}(v) \;.
\end{equation*}
The operator~$D_{(\bbf^+, u)}$ is the linearization at $u$ of the
nonlinear Cauchy-Riemann operator defined on $\bB_{\bbf^+}$, and
therefore it is a linear Cauchy-Riemann type operator.
Let $\dD' \to \bB$ the real line bundle whose fiber at $(\bbf^+, u)$
is $\det D_{(\bbf^+, u)}$.
Since the determinant line bundles of homotopic families of Fredholm
operators are isomorphic, we obtain an isomorphism
\begin{equation}\label{eq: printroom}
  \dD \cong \dD' \otimes \mathfrak{p}^* \Lambda^m T\DD^m_r
\end{equation}
by homotoping the operators $K_{(\bbf^+, u)}$ to the zero operator via
a linear homotopy.
Note that this defines a homotopy of families of Fredholm operators
because the operators~$K_{(\bbf^+, u)}$ are defined on a finite
dimensional space.
By the isomorphism \eqref{eq: printroom}, the triviality of $\dD$ is
equivalent to the triviality of $\dD'$, so from now on we will
concentrate on this second bundle.
Triviality of a rank-one real line bundle can be checked on loops.
Thus let $(\bbf^+_\bullet, u_\bullet) \colon \SS^1 \to \bB$ be a loop
in $\bB$, i.e. $\theta \mapsto (\bbf^+_\theta, u_\theta)$.
From a different point of view we have a map $\tilde{u} \colon \SS^1
\times \DD^2 \to W$ defined as $\tilde{u}(\theta, z) = u_\theta(z)$.
We define the vector bundle $T^{\mathrm{vert}} \beltSphere{n+m}$ such
that $T^{\mathrm{vert}}_p\beltSphere{n+m} = T_pL_{\bbf^+(p)}$, where
$L_{\bbf^+(p)}$ denotes the \LOB containing $p$.
We define a complex vector bundle $E \to \SS^1 \times \DD^2$ by $E=
\tilde{u}^*TW \oplus \underline{\CC}$ and a real vector subbundle $F
\to \SS^1 \times \p \DD^2$ of $\restricted{E}{\SS^1 \times \p \DD^2}$
by
$F=\bigl(\restricted{\tilde{u}}{\SS^1 \times \p
  \DD^2}\bigr)^*T^{\mathrm{vert}}\beltSphere{n+m} \oplus
\underline{\RR}$.
Here $\underline{\CC}$ and $\underline{\RR}$ denote the trivial
complex and real line bundle, respectively.
We denote by $E_\theta$ the restriction of $E$ to $\{\theta\} \times
\DD^2$, by $F_\theta$ the restriction of $F$ to $\{\theta\} \times
\DD^2$ and by $\Gamma(E_\theta, F_\theta)$ the sections of $E_\theta$
which take values in $F_\theta$ along $\p \DD^2$.
Similarly, let $\Gamma(\underline{\CC}, \underline{\RR})$ denote the
sections of the trivial line bundle~$\underline{\CC}$ over $\DD^2$
with real values at $\p \DD^2$, and $D_0$ the standard Cauchy-Riemann
operator acting on $\Gamma(\underline{\CC}, \underline{\RR})$.
We consider the family of linear Cauchy-Riemann type operators
$D^+_\theta = D_{u_\theta}' \oplus D_0$ acting on $\Gamma(E_\theta,
F_\theta)$.
This family gives rise to a determinant line bundle $\dD^+
\to \SS^1$.
Since $\det D_0 = \RR$, we have $\dD^+ \cong u^*_\bullet\dD'$.
Therefore studying the orientability of $\dD^+$ is equivalent to
studying the orientability of the moduli space.
Being spheres, the \LOB{}s are stably parallelizable and the parameter
space $\DD^m_r$ is contractible, so $T^{\mathrm{vert}}
\beltSphere{n+m}$ is stably trivial.
An orthonormal trivialization of $T^{\mathrm{vert}}\beltSphere{n+m}
\oplus \underline{\RR}$ can be pulled back to an orthonormal
trivialization~$\nu_0$ of $F$.
We can also regard $\nu_0$ as a unitary trivialization of
$\restricted{E}{\SS^1 \times \p \DD^2}$, because $F$ is a Lagrangian
subbundle of $\restricted{E}{\SS^1 \times \p \DD^2}$.
However $\nu_0$ does not extend to a unitary trivialization of $E$.
In fact it does not extend to the meridian disks of $\SS^1 \times
\DD^2$, because $(E_\theta, F_\theta)$ has Maslov index two.
We choose a map $A \colon \p \DD^2 \to U(n)$ such that the
trivialization $\nu$ defined as $\nu(\theta, z) = A(z)^{-1}
\nu_0(\theta, z)$ extends to a trivialization of $E$ over any meridian
disk.
(Of course the new trivialization is no longer an orthonormal
trivialization of $F$.)
We extend $\nu$ to a trivialization of $E$ on a regular neighborhood
of $(\SS^1 \times \p \DD^2) \cup (\{\theta_0 \} \times \DD^2)$ for a
fixed $\theta_0 \in \SS^1$.
The complement of this neighborhood in $\SS^2 \times \DD^2$ is a ball.
We can extend $\nu$ inside this ball because $\pi_2(U(n))=0$.
Then $\nu$ defines an isomorphism $(E, F) \cong (\underline{\CC}^n,
F')$, where $F_{(\theta, z)}' = A(z) {\RR^n}$.
The operators~$D^+_\theta$ become $D^+_0 + a_\theta$, where $D^+_0$ is
the standard Cauchy-Riemann operator on $\Gamma(\underline{\CC}^n,
F')$ and $a_\theta \in \Omega^{0,1}(T\DD^2, E)$.
Since $a_\theta$ belongs to a contractible space, the loop $\theta
\mapsto D_\theta^+$ can be continuously deformed to a constant loop.
Then $\dD'$ is a trivial line bundle.
This ends the proof of the orientability of the moduli space
$\widetilde \mMint(\widehat W, S_B ; J)$.
%

\printbibliography

\end{document}